\newcommand{\newsection}[1]{\setcounter{equation}{0} \section{#1}}
\newcommand{\bea}{\begin{eqnarray}}
\newcommand{\eea}{\end{eqnarray}}
\newcommand{\clb}{\mathcal{B}}
\newcommand{\clh}{\mathcal{H}}
\newcommand{\clk}{\mathcal{K}}
\newcommand{\cll}{\mathcal{L}}
\newcommand{\cln}{\mathcal{N}}
\newcommand{\clu}{\mathcal{U}}
\newcommand{\clw}{\mathcal{W}}
\newcommand{\mo}{\mathop{\oplus}}
\def\textmatrix#1&#2\\#3&#4\\{\bigl({#1 \atop #3}\ {#2 \atop #4}\bigr)}
\def\dispmatrix#1&#2\\#3&#4\\{\left({#1 \atop #3}\ {#2 \atop #4}\right)}
\newcommand{\be}{\begin{equation}}
\newcommand{\ee}{\end{equation}}
\newcommand{\ben}{\begin{eqnarray*}}
	\newcommand{\een}{\end{eqnarray*}}
\newcommand{\NI}{\noindent}
\newcommand{\bi}{\begin{itemize}}
	\newcommand{\ei}{\end{itemize}}
\theoremstyle{definition}
\theoremstyle{plain}
\newtheorem{thm}{Theorem}[section]
\newtheorem{cor}[thm]{Corollary}
\newtheorem{lem}[thm]{Lemma}
\newtheorem{prop}[thm]{Proposition}
\theoremstyle{definition}
\newtheorem{defn}[thm]{Definition}
\newtheorem{rem}[thm]{Remark}
\newtheorem{ex}[thm]{Example}
\numberwithin{equation}{section}
\let\phi=\varphi
\begin{document}

\title[On Decomposition for Pairs of Twisted Contractions]
{On Decomposition for Pairs of Twisted Contractions}

\author[Majee]{Satyabrata Majee}
\address{Indian Institute of Technology Roorkee, Department of Mathematics,
		Roorkee-247 667, Uttarakhand,  India}
\email{smajee@ma.iitr.ac.in}

\author[Maji]{Amit Maji}
\address{Indian Institute of Technology Roorkee, Department of Mathematics,
		Roorkee-247 667, Uttarakhand,  India}
\email{amit.maji@ma.iitr.ac.in, amit.iitm07@gmail.com}

\subjclass[2010]{47A45, 47A20, 47A15, 47A13, 47A05}

%\today

\keywords{Isometries, contractions, twisted contractions, doubly twisted 
contractions, dilations, extensions}

\begin{abstract}
This paper presents Wold-type decomposition for various pairs 
of twisted contractions on Hilbert spaces. As a consequence,
we obtain Wold-type decomposition for pairs of doubly twisted 
isometries and in particular, new and simple proof of 
S\l{}o\'{c}inski's theorem for pairs of doubly commuting isometries 
are provided. We also achieve an explicit decomposition for pairs 
of twisted contractions such that the c.n.u. parts of the contractions 
are in $C_{00}$. It is shown that for a pair $(T,V^*)$ of twisted 
operators with $T$ as a contraction and $V$ as an isometry, there 
exists a unique (upto unitary equivalence) pair of doubly twisted 
isometries on the minimal isometric dilation space of $T$. As an 
application, we prove that pairs of twisted operators consisting of an 
isometry and a co-isometry are doubly twisted. Finally, we have 
given a characterization for pairs of doubly twisted isometries.
\end{abstract}	
\maketitle

\newsection{Introduction}

Operator theory on Hilbert spaces has two essential parts: 
the theory of normal operators and the theory of non-normal 
operators. The structure of normal operators is well known 
due to spectral decomposition. On the other hand, the complete 
structure of non-normal operators is unknown to the literature 
and an important class of non-normal operators is isometries. 
Indeed, one of the important problems in operator theory, 
analytic function theory and operator algebras is the 
classification and representation of $n$-tuples ($n > 1$) of 
commuting isometries on Hilbert spaces.
\vspace{.1cm}

In a probabilistic language, Wold \cite{WOLD-TIME SERIES} first
established a notable decomposition for stationary stochastic 
processes. Later, von Neumann, Kolmogorov, and Halmos coined the 
abstraction of Wold's result for isometries on Hilbert spaces: 
Every isometry can be uniquely written as a direct sum of a unitary 
operator and copies of the unilateral shifts. This is called {\it{Wold 
decomposition or Wold-von Neumann decomposition}} (see details in Section 2). 
This decomposition plays a vital role in many areas of operator algebras 
and operator theory, namely, dilation theory, invariant subspace theory, 
operator interpolation problem etc. It is now a natural question: 
{\textsf{Does there exist Wold-type decomposition for pairs of commuting 
isometries (in general for contractions) on Hilbert spaces?}} 
\vspace{.1cm}

There has been a lot of research in this direction for the last few decades and 
many important and interesting results have been obtained in many directions. 
Still a complete and explicit structure for pairs of commuting isometries, 
or, in general, an $n$-tuple of commuting isometries on a Hilbert space, is 
unknown. Many researchers have investigated Wold-type decomposition for a 
pair of commuting isometries/commuting partial isometries/commuting 
contractions. For instance, Suciu \cite{SUCIU-SEMIGROUPS} developed a 
structure theory for a semigroup of isometries. Later, S\l{}o\'{c}inski 
\cite{SLOCINSKI-WOLD} obtained a Wold-type decomposition for pairs of doubly 
commuting isometries from Suciu's decomposition of the semigroup of isometries. 
Burdek, Kosiek and S\l{}o\'{c}inski \cite{BKS-CANONICAL} developed the canonical Wold 
decomposition considering the finite-dimensional wandering space for commuting 
isometries. Popovici \cite{POPOVICI-WOLD TYPE} studied the Wold-type decomposition
for pairs of commuting isometries. Later, Sarkar \cite{SARKAR-WOLD} generalized 
S\l{}o\'{c}inski's result for $n$-tuple of doubly commuting isometries. Many important 
results have been obtained by many researchers, like 
Burdak, Kosiek, Pagacz and S\l{}o\'{c}inski (\cite{BKPS-COMMUTING}, 
\cite{BKPS-SHIFT TYPE}), Bercovici, Douglas and Foia\c{s} (\cite{BDF-BI-ISOMETRIES}, 
\cite{BDF-MULTI-ISOMETRIES}), Maji, Sarkar, and Sankar \cite{MSS-PAIRS}, and 
references therein etc. On the other hand, Halmos and Wallen \cite{HW-POWERS} studied
decomposition for power partial isometry in 1970. Burdak \cite{BURDAK-DECOMPOSITION} 
developed a new characterization for a pair of commuting (not necessarily doubly commuting) 
contractions and obtained decomposition results in the case of power partial isometries. 
\vspace{.1cm}

Jeu and Pinto \cite{JP-NON COMMUTING} established Wold decomposition  
for $n$-tuple $(n > 1)$ of doubly non-commuting isometries. Later, Rakshit, 
Sarkar, and Suryawanshi \cite{RSS-TWISTED ISOMETRIES} generalized those 
results by introducing $\clu_n$-twisted isometries. Recently, we introduced 
$n$-tuple of doubly non-commuting and $\clu_n$-twisted contractions (see section 
3 and section 5 in \cite{MM-Wold-type}) and then established a complete 
description of those tuples.
\vspace{0.1cm}

Dilation theory is one of the most effective tools to study the structure of 
contractions. One of the striking results is the Sz.-Nagy's dilation 
result (see \cite{NF-BOOK}): Every contraction has an isometric dilation 
on a larger Hilbert space. Later, And\^{o} \cite{ANDO-CONTRACTIONS} proved 
that a commuting pair of contractions dilates to a commuting pair of isometries. 
Other special classes of operators are $C_{\alpha \beta} = C_{\alpha \cdot} 
\cap C_{\cdot \beta}$ ($\alpha, \beta$= 0, 1), which plays a significant role 
in the study of general contractions on Hilbert spaces. Indeed, every contraction
on a Hilbert space has a canonical triangulations of the following types 
(see \cite{NF-BOOK}):
\[
	\begin{bmatrix} 
	C_{0.} & * \\ 
	O & C_{1.}
	\end{bmatrix} 
\quad
\mbox{and}
\quad
\begin{bmatrix} 
	C_{.1} & * \\ 
	O & C_{.0}
\end{bmatrix}. 
\]
The reducibility of an operator means deciding whether the operator 
has a nontrivial reducing subspace or not, and the classification of 
invariant subspaces and reducing subspaces of various operators on 
function spaces has proved to be very challenging research problems 
in analysis. The reducibility of general $C_0(N)$ operators is complicated.
However, Gu \cite{GU-reducibility} studied the reducibility of 
any power of a $C_0(1)$ operator (see also \cite{DF-On}). 
Uchiyama (\cite{U-Hyperinvariant}, \cite{U-Double}) discussed hyperinvariant 
subspaces for contractions of class $C_{.0}$ and also found double commutants for 
$C_{.0}$ contractions with finite defect indices. Recently,  Benhida et al.
\cite{BFT-Reducing} obtained several equivalent conditions for the
reducibility of a contraction in the class $C_{00}$ in terms of a 
minimal unitary dilation and the characteristic function. 
\vspace{.1cm}

The main aim of this paper is to investigate decomposition of various
pairs of twisted operators (in particular, for commuting) as well as 
the structure of different kinds of operators (contraction, isometry, 
partial isometry) on Hilbert spaces. The geometry of Hilbert spaces, 
canonical decomposition for a contraction, classical Wold-von Neumann 
decomposition for isometry and dilation theory are the essential 
tools used in this article. One of the main results is Theorem 3.1 
(see Section 3): It says that if a pair of doubly twisted operators 
$(T, V)$ on a Hilbert space $\clh$, where $T$ is a contraction, 
then the canonical decomposition for $T$ reduces $V$. Secondly is 
Theorem 5.3 (see Section 5 for details): Let $\left( T, V^* \right)$ 
be a pair of twisted operators on a Hilbert space $\mathcal{H}$ such 
that $T$ is a contraction and $V$ is an isometry. Let $S$ on $\mathcal{K}$ 
be the minimal isometric dilation for $T$. If $\widetilde{V}$ on 
$\mathcal{K}$ is an extension of the isometry $V$, then $( S,\widetilde{V} )$  
is a pair of doubly twisted isometries on the minimal space $\clk$. 
Moreover, the pair $(S, \widetilde{V} )$ on the minimal space $\clk$ 
is unique up to unitary equivalence.   
\vspace{.1cm}

The plan for the paper is as follows. In Section 2, we discuss
some basic definitions and the canonical decomposition for a 
contraction as well as classical Wold-von Neumann decomposition 
theorem for isometry. In Section 3, we investigate decomposition for 
various pairs of twisted operators. In Section 4, we study the structure 
of a twisted commutant of a power partial isometry. Section 5 
is devoted to dilation of a contraction and we have shown that the 
Wold-type decomposition for a certain pair of operators holds on the 
minimal isometric dilation space. Finally, we have given a characterization
for doubly twisted isometries in Section 6.

\newsection {Preliminaries}
Through out this paper, $\clh$ denotes a complex separable Hilbert space,  
$\mathcal{B}(\clh)$ as the algebra of all bounded linear operators 
(operators for short) on $\clh$, and $P_{\cll}$ is the orthogonal 
projection of $\clh$ onto $\cll$. For $X \in \mathcal{B}(\clh)$,
$\{ X \}^{'}$ denotes the commutant of $X$. 
A closed subspace $\mathcal{M}$ of $\mathcal{H}$ is 
invariant under $T \in \clb(\clh)$ if $T(\mathcal{M}) \subseteq 
\mathcal{M}$ and subspace $\mathcal{M}$ reduces $T$ if $T(\mathcal{M}) 
\subseteq \mathcal{M}$ and $ T(\mathcal{M}^{\perp}) \subseteq 
\mathcal{M}^{\perp}$. A contraction $T$ on $\clh$ (that is, $\|Th \| 
\leq \|h \|$ for all $h \in \clh$) is said to be a pure contraction if 
$T^{*m} \rightarrow 0$ as $m \rightarrow \infty$ in the strong operator 
topology. A contraction $T$ on $\mathcal{H}$ is called completely non-unitary 
(c.n.u. for short) if there does not exist any nonzero $T$-reducing 
subspace $\mathcal{L}$ of $\mathcal{H}$ such that $T|_{\mathcal{L}}$ is 
a unitary operator. A closed subspace $\mathcal{W} \subseteq \mathcal{H}$ 
is said to be a \textit{wandering subspace} of an isometry $V$ (that is, 
$V^{*}V =I_{\clh}$) if 
\[
V^{k} \mathcal{W} \perp V^{\ell} \mathcal{W}   \ \text{for  all}
 \ \  {k , \ell} \ \in \   \mathbb{Z}_{+} \ \text{with} \ k \neq \ell. 
\]
An isometry $V$ on $\mathcal{H}$ is called a {\textit {unilateral shift 
or shift}} if $ \mathcal{H}= \bigoplus_{m \geq 0} V^m \mathcal{W}$ for 
some wandering subspace $\mathcal{W}$ of $V$. Equivalently, an isometry 
$V$ on $\clh$ is said to be a pure isometry or shift if $V^{*m} \rightarrow 0$ 
as $m \rightarrow \infty$ in the strong operator topology (see Halmos 
\cite{HALMOS-BOOK}). It is noted that if $\clw$ is a \textit{wandering 
subspace} of a shift $V$ on $\clh$, then
\[
\clw = \clh \ominus V \clh
\]
and the dimension of $\clw$ is called the multiplicity of the shift $V$
(see \cite{NF-BOOK}).
\vspace{.2cm}

One of the fundamental results in dilation theory is that every 
contraction on Hilbert spaces can be decomposed into direct sum 
of unitary and c.n.u. We refer this as \textit{canonical decomp
osition theorem} for a contraction (\cite{NF-BOOK}). 

\begin{thm}
Every contraction $T$ on a Hilbert space $\mathcal{H}$ corresponds 
a unique decomposition of $\mathcal{H}$ into an orthogonal sum of 
two $T$-reducing subspaces $\mathcal{H}={\mathcal{H}}_u \oplus 
{\mathcal{H}}_{\neg u}$ such that $T|_{{\mathcal{H}}_u}$ is unitary 
and $T|_{{\mathcal{H}}_{\neg u}}$ is c.n.u. ( ${\mathcal{H}}_u$ or 
${\mathcal{H}}_{\neg u}$ may equal to $\{0\}$).
Moreover
\[
{\mathcal{H}}_u =	\{ h \in \clh : \|T^n h\|= \|h\|= \|{T^*}^n h\| 
~~\mbox{for}~~ n=1,2,, \ldots \}.
\]
Here $T_u=T|_{{\mathcal{H}}_u}$ and $T_{\neg u}=T|_{{\mathcal{H}}_{\neg u}}$ 
are called unitary part and c.n.u. part of $T$, respectively and 
$T=T_u \oplus T_{\neg u}$ is called the canonical decomposition for $T$.
\end{thm}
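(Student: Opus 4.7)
The plan is to take the set $\mathcal{H}_u$ given in the statement as the definition of the unitary part, show that it is a closed $T$-reducing subspace on which $T$ restricts to a unitary, and then argue that the restriction of $T$ to its orthogonal complement is c.n.u.\ by maximality. For the first step I would write
\[
\mathcal{H}_1 = \bigcap_{n\geq 1}\ker(I - T^{*n}T^n), \qquad \mathcal{H}_2 = \bigcap_{n\geq 1}\ker(I - T^n T^{*n}),
\]
and observe that since $T$ is a contraction, $I - T^{*n}T^n \geq 0$ and $I - T^n T^{*n} \geq 0$, so that $\|T^n h\| = \|h\|$ is equivalent to $T^{*n}T^n h = h$, and similarly for the co-isometric condition. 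Hence $\mathcal{H}_u = \mathcal{H}_1 \cap \mathcal{H}_2$ is automatically a closed linear subspace.

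Next I would verify that $\mathcal{H}_u$ reduces $T$. For $h \in \mathcal{H}_u$ the $n=1$ conditions give $T^*Th = h$ and $TT^*h = h$, so $\|T(Th)\| = \|Th\| = \|h\|$ and more generally $\|T^n(Th)\| = \|T^{n+1}h\| = \|h\| = \|Th\|$; on the other side $T^{*n}(Th) = T^{*(n-1)}(T^*Th) = T^{*(n-1)}h$, whose norm equals $\|h\| = \|Th\|$. A symmetric argument handles $T^*h$. Hence both $Th$ and $T^*h$ lie in $\mathcal{H}_u$. The relations $T^*T|_{\mathcal{H}_u} = I$ and $TT^*|_{\mathcal{H}_u} = I$ then show directly that $T|_{\mathcal{H}_u}$ is unitary.

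To see that $T_{\neg u} := T|_{\mathcal{H}_u^\perp}$ is c.n.u., suppose $\mathcal{L} \subseteq \mathcal{H}_u^\perp$ is a nonzero $T_{\neg u}$-reducing subspace on which $T_{\neg u}$ acts as a unitary. Then $\mathcal{L}$ also reduces $T$ and $T|_{\mathcal{L}}$ is unitary, so every $h \in \mathcal{L}$ satisfies $\|T^n h\| = \|h\| = \|T^{*n}h\|$ for all $n \geq 1$. By the definition of $\mathcal{H}_u$ this forces $\mathcal{L} \subseteq \mathcal{H}_u$, contradicting $\mathcal{L} \subseteq \mathcal{H}_u^\perp$ and $\mathcal{L}\neq\{0\}$.

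For uniqueness, I would use the same characterization as a maximality principle: if $\mathcal{H} = \mathcal{H}' \oplus \mathcal{H}''$ is any decomposition into $T$-reducing subspaces with $T|_{\mathcal{H}'}$ unitary and $T|_{\mathcal{H}''}$ c.n.u., then every $h \in \mathcal{H}'$ meets the defining norm equalities of $\mathcal{H}_u$ (since $T|_{\mathcal{H}'}$ is both isometric and co-isometric), giving $\mathcal{H}' \subseteq \mathcal{H}_u$. If this inclusion were strict, then $\mathcal{H}_u \cap \mathcal{H}''$ would be a nonzero $T$-reducing subspace of $\mathcal{H}''$ on which $T$ is unitary, contradicting that $T|_{\mathcal{H}''}$ is c.n.u. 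Thus $\mathcal{H}' = \mathcal{H}_u$ and consequently $\mathcal{H}'' = \mathcal{H}_u^\perp$. The main obstacle, as I see it, is really only the linearity/closedness of $\mathcal{H}_u$: once the equality $\|T^n h\| = \|h\|$ for a contraction is translated into the linear relation $T^{*n}T^n h = h$, the rest of the argument reduces to straightforward manipulations with these operator identities.
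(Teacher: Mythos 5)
Your proof is correct, and it is essentially the standard argument from Sz.-Nagy and Foia\c{s}, which the paper cites for this theorem without reproducing a proof: you define $\mathcal{H}_u$ via the norm conditions, convert each equality $\|T^n h\| = \|h\|$ (resp.\ $\|T^{*n}h\|=\|h\|$) into the linear condition $(I - T^{*n}T^n)h = 0$ (resp.\ $(I - T^nT^{*n})h = 0$) using positivity of these defect operators, check invariance under $T$ and $T^*$, and obtain both the c.n.u.\ property of the complement and uniqueness from the maximality of $\mathcal{H}_u$. Since the paper states this result as background (Theorem 2.1, referencing Sz.-Nagy--Foia\c{s}) rather than proving it, there is no in-paper proof to compare; your argument matches the reference's route and has no gaps.
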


In particular, \textit{canonical decomposition theorem} for an isometry
coincides with the classical \textit{Wold-von Neumann decomposition}
(\cite{NF-BOOK}).

\begin{thm}\label{thm-Wold}
	Let $V$ be an isometry on a Hilbert space $\clh$ and $\clw = \clh \ominus V \clh$.
	Then $\clh$ decomposes uniquely as a direct sum of two $V$-reducing subspaces
	$\clh_s = \displaystyle{\mo_{m=0}^\infty} V^m \clw$ and
	$\clh_u = \clh \ominus \clh_s$ and
	\[
	V = \begin{bmatrix} V_s & O\\ O & V_u
	\end{bmatrix} \in \clb(\clh_s \oplus \clh_u),
	\]
	where $V_s = V|_{\clh_s}$ is shift and $V_u = V|_{\clh_u}$ 
	is unitary.
\end{thm}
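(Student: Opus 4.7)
The plan is to follow the classical Halmos-style argument: identify $\mathcal{W} = \mathcal{H} \ominus V\mathcal{H}$ as a wandering subspace, take $\mathcal{H}_s$ as its shift closure, and then identify its complement $\mathcal{H}_u$ with $\bigcap_{n\geq 0} V^n\mathcal{H}$ in order to see the unitarity.

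First I would observe that $\mathcal{W}$ is wandering: for any $k<\ell$, $V^{\ell-k}\mathcal{W}\subseteq V\mathcal{H}=\mathcal{H}\ominus\mathcal{W}$, so applying the isometry $V^k$ (which preserves inner products) gives $V^k\mathcal{W}\perp V^\ell\mathcal{W}$. Equivalently, $V^*w=0$ for every $w\in\mathcal{W}$, since $\langle V^*w,h\rangle=\langle w,Vh\rangle=0$. Setting $\mathcal{H}_s=\bigoplus_{m\geq 0}V^m\mathcal{W}$, the subspace $\mathcal{H}_s$ is obviously $V$-invariant, and $V^*$-invariant because $V^*V^m\mathcal{W}=V^{m-1}\mathcal{W}$ for $m\geq 1$ and $V^*\mathcal{W}=\{0\}$. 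Hence $\mathcal{H}_s$ reduces $V$, and therefore so does $\mathcal{H}_u=\mathcal{H}\ominus\mathcal{H}_s$. By construction $V|_{\mathcal{H}_s}$ is a shift with wandering subspace $\mathcal{W}$.

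The key step is to show $V|_{\mathcal{H}_u}$ is unitary. For this I would iterate the orthogonal decomposition $\mathcal{H}=\mathcal{W}\oplus V\mathcal{H}$ to obtain
\[
\mathcal{H}=\mathcal{W}\oplus V\mathcal{W}\oplus\cdots\oplus V^{n-1}\mathcal{W}\oplus V^n\mathcal{H},
\qquad V^n\mathcal{H}=\Big(\bigoplus_{m=0}^{n-1}V^m\mathcal{W}\Big)^{\perp},
\]
so letting $n\to\infty$ yields $\mathcal{H}_u=\bigcap_{n\geq 0}V^n\mathcal{H}$. Since $V$ is injective and the sequence $V^n\mathcal{H}$ is decreasing, the identity $V\big(\bigcap_n V^n\mathcal{H}\big)=\bigcap_n V^{n+1}\mathcal{H}=\mathcal{H}_u$ then shows that $V$ maps $\mathcal{H}_u$ onto itself, so $V|_{\mathcal{H}_u}$ is a surjective isometry, i.e.\ unitary.

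For uniqueness, suppose $\mathcal{H}=\mathcal{K}_1\oplus\mathcal{K}_2$ is another such decomposition with $V|_{\mathcal{K}_1}$ a shift and $V|_{\mathcal{K}_2}$ unitary. Since $V\mathcal{K}_2=\mathcal{K}_2$, we have $\mathcal{K}_2\subseteq V^n\mathcal{H}$ for every $n$, giving $\mathcal{K}_2\subseteq\mathcal{H}_u$. On the other hand $V\mathcal{H}=V\mathcal{K}_1\oplus\mathcal{K}_2$, so $\mathcal{W}=\mathcal{H}\ominus V\mathcal{H}=\mathcal{K}_1\ominus V\mathcal{K}_1$, and thus $\mathcal{K}_1=\bigoplus_{m\geq 0}V^m\mathcal{W}=\mathcal{H}_s$, forcing $\mathcal{K}_2=\mathcal{H}_u$. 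The main obstacle in this plan is the surjectivity argument for $V|_{\mathcal{H}_u}$; the cleanest route is through the intersection identity $\mathcal{H}_u=\bigcap_n V^n\mathcal{H}$, which is what makes the telescoping decomposition of $\mathcal{H}$ indispensable.
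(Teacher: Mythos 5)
Your proof is correct and is the standard textbook argument for the Wold--von Neumann decomposition. Note, however, that the paper does not prove this theorem at all: it is stated in the Preliminaries as a classical background result with a citation to Sz.-Nagy and Foia\c{s}, so there is no in-paper argument to compare against. Your write-up supplies exactly the proof that the reference contains -- wandering property of $\clw = \clh \ominus V\clh$, the telescoping identity $\clh = \bigoplus_{m=0}^{n-1} V^m\clw \oplus V^n\clh$ leading to $\clh_u = \bigcap_{n\ge 0} V^n\clh$, surjectivity of $V$ on $\clh_u$ from injectivity and the nested intersection, and uniqueness via $\mathcal{K}_2 \subseteq \bigcap_n V^n\clh$ together with $\clw = \mathcal{K}_1 \ominus V\mathcal{K}_1$. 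All steps are sound; no gaps.
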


There are some certain classes of contractions which are important 
in understanding the structure of a contraction (see \cite{NF-BOOK}). 
We say that a contraction $T \in \clb(\clh)$ belongs to the class 
$C_{.0}$ if
\[
\lim_{n \rightarrow {\infty}}  \| T^{*n} h \|=0  \mbox{~for all~} h \in \clh,
\]
and $T$ belongs to the class $C_{.1}$ if
\[
\inf_{n}  \| T^{*n} h \|>0  \mbox{~for all non zero~} h \in \clh.
\]
Also a contraction $T$ belongs to the class $C_{0.}$ (or $C_{1.}$ ) 
if $T^*$ belongs to $C_{.0}$ (or $C_{.1}$ ). The class $C_{0.} \cap C_{.0}$ 
is denoted by $C_{00}$, that is, a contraction $T \in \clb(\clh)$ belongs 
to the class $C_{00}$ if
\[
\lim_{n \rightarrow {\infty}}  \| T^nh \|= 0 = \lim_{n \rightarrow {\infty}}  \| T^{*n} h \| 
\mbox{~for all~} h \in \clh.
\]
The class $C_{11}$ is defined as $C_{1.} \cap C_{.1}$.
For example, an operator $T \in \clb(\ell^2(\mathbb{Z}_+))$ is 
defined by
\[
Te_n = \frac{1}{n+2}e_n \quad (n \in \mathbb{Z}_{+}, \{e_n\} 
\mbox{~standard orthonormal basis for} 
~\ell^2(\mathbb{Z}_+))
\]
and belongs to the class $C_{00}$. 

An operator $T$ on $\mathcal{H}$ 
is called a {\it{partial isometry}} if $|| Tx||= ||x||$ for every 
$x \in (\ker T)^\perp$.  The space $(\ker T)^\perp$ is called the initial 
space of $T$, and $\mbox{ran} ~T$ is called its final space. We say that $T$ 
is a power partial isometry if each $T^n$ is a partial isometry for 
$n \geq 1$. It is easy to see that isometry, co-isometry are examples 
of power partial isometry. 
\vspace{.1cm}

Let us turn to the definition of tuples of twisted and  doubly twisted operators.  
The notion of $\clu_n$-twisted contractions was introduced in our earlier paper
\cite{MM-Wold-type}. We shall here call it as doubly twisted contractions.

Let $n>1$ and for $1 \leq i < j \leq n$,   $\{U_{ij}\}$ be $\binom{n}{2}$ 
commuting unitaries on a Hilbert space $\clh$ such that $U_{ji}:=U^*_{ij}$. 
Then we refer $\{U_{ij}\}_{i <j}$ as a twist on $\clh$. 
\begin{defn}(Twisted)
An $n$-tuple $(T_1, \ldots, T_n)$ of operators on $\clh$ is said to be 
a twisted with respect to a twist $\{U_{ij}\}_{i<j}$ if 
\[
T_iT_j=U_{ij}T_jT_i \hspace{1cm}
\mbox{and} \hspace{1cm} T_kU_{ij} =U_{ij}T_k
\]	
for all $i,j,k=1,\dots, n$ and $i \neq j$.
\end{defn}

\begin{defn}(Doubly Twisted)
An $n$-tuple $(T_1, \dots, T_n)$ of operators on $\clh$ is said to be 
doubly twisted with respect to a twist $\{U_{ij}\}_{i<j}$ if 
\[
T_iT_j=U_{ij}T_jT_i; \hspace{0.5cm}
 \hspace{0.5cm} T_i^*T_j= U^*_{ij}T_jT_i^* \hspace{0.5cm}
 \mbox{and} \hspace{1cm} T_kU_{ij} =U_{ij}T_k
\]
for all $i,j,k=1, \dots, n$ and $i \neq j$. 
\end{defn}

In particular, if $U_{ij}=I_{\clh}$ for $1 \leq i < j \leq n$, then $n$-tuple 
$(T_1, \dots, T_n)$ of twisted (or doubly twisted) becomes $n$-tuple of commuting 
(or doubly commuting) operators.
In particular, a pair $(T_1, T_2)$ of operators on $\clh$ is called twisted 
pair with respect to a twist $\{U \}$ if $T_1T_2=UT_2T_1$ and $ T_iU =UT_i$ for 
$i=1, 2$. A pair $(T_1, T_2)$ of operators on $\clh$ is called doubly twisted pair 
with respect to a twist $\{U \}$ if $T_1T_2=UT_2T_1$, $T_1^*T_2= U^*T_2T_1^*$ and
$T_iU =UT_i$ for $i=1, 2$. We shall call $(T_1, T_2)$ as twisted operators or a 
doubly twisted operators without referring a twist $\{U \}$. A commuting pair 
$(T_1, T_2)$ of operators on $\clh$ is said to be doubly commuting if 
$T_1T_2^* = T_2^*T_1$.

\newsection{Decomposition for pairs of operators}

In this section, we studied decomposition results for pairs 
of doubly twisted as well as twisted contractions.The following 
result, which has been shown in our earlier paper \cite{MM-Wold-type} 
will be used frequently in the sequel. However, we present new 
proof for the sake of completeness.

\begin{thm}\label{main thm-reduce}
Let $\left( T,V \right)$ be a pair of doubly twisted operators on 
a Hilbert space $\clh$ such that $T$ is a contraction. Let $\clh 
=\clh_u \oplus \clh_{\neg u}$ be the \textit{canonical decomposition} 
for the contraction $T$. Then the decomposition reduces the operator $V$.   
\end{thm}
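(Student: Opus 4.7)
The plan is to establish that $\clh_u$ is invariant under both $V$ and $V^*$; since $\clh_{\neg u} = \clh_u^{\perp}$, this will give that the decomposition reduces $V$. The pivotal observation is that, although $V$ does not commute with $T$, the doubly twisted relations together with $U$ being unitary force $V$ to commute with the positive operators $T^{*n}T^n$ and $T^n T^{*n}$ for every $n \geq 1$.

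First I would iterate the twist relations. Because $TU = UT$, on taking adjoints $T^*U = UT^*$ as well, so $U$ commutes with both $T$ and $T^*$. A routine induction applied to $TV = UVT$ and $T^*V = U^*VT^*$ then yields
\[
T^n V \;=\; U^n V T^n \qquad \text{and} \qquad T^{*n} V \;=\; U^{*n} V T^{*n} \qquad (n \geq 1),
\]
and taking adjoints produces the analogous identities with $V^*$ in place of $V$.

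Next I would combine the two relations to extract the crucial commutation. Explicitly,
\[
T^{*n} T^n V \;=\; T^{*n}\bigl( U^n V T^n \bigr) \;=\; U^n\, T^{*n} V\, T^n \;=\; U^n \bigl( U^{*n} V T^{*n} \bigr) T^n \;=\; V\, T^{*n} T^n,
\]
and a symmetric computation gives $T^n T^{*n} V = V\, T^n T^{*n}$. Taking adjoints, $V^*$ also commutes with $T^{*n}T^n$ and $T^n T^{*n}$.

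Finally, I would invoke Theorem 2.1, which identifies $\clh_u$ with the vectors $h$ satisfying $\|T^n h\| = \|h\| = \|T^{*n} h\|$ for every $n$; equivalently, $T|_{\clh_u}$ is unitary, so $T^{*n}T^n h = h$ and $T^n T^{*n} h = h$ for all $n \geq 1$. For such an $h$,
\[
\|T^n(Vh)\|^2 \;=\; \la T^{*n} T^n V h,\, V h \ra \;=\; \la V\, T^{*n} T^n h,\, V h \ra \;=\; \la V h,\, V h \ra \;=\; \|Vh\|^2,
\]
and likewise $\|T^{*n}(Vh)\|^2 = \|Vh\|^2$, whence $Vh \in \clh_u$. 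The identical argument, applied with $V^*$ in place of $V$, shows $V^* h \in \clh_u$. Consequently $\clh_u$ reduces $V$, and so does $\clh_{\neg u}$. The main obstacle is the middle step: spotting that the $U^n$ and $U^{*n}$ factors arising from the two twist relations cancel, so that $V$ genuinely commutes with $T^{*n}T^n$ and $T^n T^{*n}$ even though it only twistedly commutes with $T$ and $T^*$ separately; once this is noticed, invariance of $\clh_u$ is a one-line consequence of the norm characterization.
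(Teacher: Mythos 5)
Your proof is correct and follows essentially the same approach as the paper: both hinge on the observation that the $U^n$ and $U^{*n}$ factors coming from the two twist relations cancel, so that $V$ (and $V^*$) genuinely commute with $T^{*n}T^n$ and $T^nT^{*n}$, after which invariance of $\clh_u$ follows from its characterization in Theorem~2.1. Your version is in fact slightly cleaner, since by working directly with the norm condition $\|T^nh\|=\|h\|=\|T^{*n}h\|$ you avoid the paper's detour through the defect operators $D_{T^m}$, the polynomial approximation of $\lambda\mapsto\lambda^{1/2}$, and the kernel description $\clh_u=\bigcap_m\cln_{D_{T(m)}}$.
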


\begin{proof}
Suppose that $T \in \clb({\clh})$ is a contraction. Then
$T^*T \leq I_{{\mathcal{H}} }$ and also $T T^* \leq {I_{{\clh}}}.$   
Now define the defect operators
\[
	{D_T}=(I_{{\mathcal{H}}}-T^*T)^{1/2}  \mbox{~~ and~~} 
	\  {D_{T^*}}=(I_{{\mathcal{H}}}-T T^*)^{1/2} 
\]
which are positive operators and bounded by $0$ and $1$. Since $(T,V)$ is 
a pair of doubly twisted operators, there is a unitary operator (twist) 
$U \in \clb(\clh)$ with $ T,V \in \{ U \}'$ such that
\[
TV=UVT \mbox{~~~~and~~~~} VT^*=UT^*V.
\]
Thus
\[
VD_T^2= V-VT^*T=V-UT^*VT=V-UT^*U^*TV=(I_{\clh} - T^*T)V = D_T^2V.
\]
Now using iteration 
\[
V(D_T^2)^n=(D_T^2)^nV  \ \text{for} \ n=0,1,2,\ldots.
\]
Therefore
\begin{align}\label{Thm-equ1}
V p(D_T^2)=p(D_T^2)V 
\end{align}
for every polynomial 
$p(\lambda)=\alpha_0 +\alpha_1 \lambda + \cdots +\alpha_k \lambda^k$. 
Thus we can choose a sequence of polynomials $p_n(\lambda)$ 
that tends to the function $\lambda^{1/2}$ uniformly in the 
interval $0 \leq \lambda \leq 1$. Since $D_T^2$ is a positive 
operator also bounded by $0$ and $1$, by spectral representation 
of $D_T^2$ there is a sequence of operators $p_n(D_T^2)$ 
converges to $D_T$ in the operator norm. Letting $n \rightarrow \infty$ 
from (\ref{Thm-equ1}), we get
\begin{equation*}
VD_T=D_TV.
\end{equation*}
Similarly, we obtain
\[
 VD_{T^*}=D_{T^*} V.
\]

\NI 
Suppose $T \in \clb(\clh)$ and $h \in \clh$ such that $\|Th \| = \|h \|$.
Then
\[
<h, h > = \|h\|^2 = \|Th\|^2 = <T^*Th, h >.
\]
Thus $<D_T^2 h, h> =0$ if and only if $\|D_T h \| = 0$. Hence the set 
$\{ h \in \clh : \|Th\|=\|h\|\}$ coincides with 
$\cln_{D_T}=\{h \in \clh: D_Th=0\}$ (or $\cln_{D_T}$= ker($D_T$)) 
which is a subspace of $\clh$.
Consider $T(m)=T^m \ (m \geq 1),  \ \ T(0)=I, \ \ T(m)=T^{*|m|} \ \ (m \leq -1)$.
For fixed integer $m \in \mathbb{Z}$, $T(m)$ is a contraction on $\clh$. 
Therefore, the set $\{h \in \clh : \|T(m)h\|= \|h\| \}$ is same as 
$\cln_{D_{T(m)}}=\{h \in \clh : {D_{T(m)}}h = 0\}$ which is a  
subspace of $\clh$. Again consider 
$\clh_u=\{h \in \clh: \|T(m)h\|=\|h\| , \ m \in \mathbb{Z}\}$. Then
$\clh_u$ can be expressed as 
\begin{align*}
\clh_u = \bigcap_{m =-\infty}^{\infty} \cln_{D_{T(m)}},
\text{where} \ D_{T(m)}&= 
\begin{cases}
(I-T^{*m}T^m)^{\frac{1}{2}} & \forall \ m \geq 0,\\
(I-T^{|m|}T^{*|m|})^{\frac{1}{2}} & \forall \ m \leq -1.
\end{cases}
\end{align*} 
\NI
Since $D_T$ and $D_{T^*}$ are self-adjoint operators and the pairs 
$(V,D_T)$ and $(V, D_{T^*})$ are commuting, 
the pairs $(V, D_{T^m})$ and $(V, D_{T^{*|m|}})$ are doubly commuting.
Let $h \in \clh_u= \bigcap_{m =-\infty}^{\infty} \cln_{D_{T(m)}}$. 
Then for each fixed $m \geq 0$ 
 \[
  D_{T^m}Vh = V D_{T^m}h =0 \ \text{and} \  D_{T^m}V^*h = V^* D_{T^m}h =0. 
 \]
Therefore $ Vh$ and  $V^*h \in  \cln_{D_{T^m}}$. 
Similarly $ Vh$ and $V^*h \in  \cln_{D_{T^{*|m|}}}$ \text{for} each fixed $ m \leq -1.$ 
It says that $ V(\clh_u), V^*(\clh_u) \subseteq \clh_u $. 
Again $\clh_u$ and $\clh_{\neg u}$ are orthogonal subspaces of $\clh$. 
Hence the canonical decomposition $\clh=\clh_u \oplus \clh_{\neg u}$ reduces $V$.
\end{proof}

The immediate consequences of the above result are as follows:
\begin{rem}
Let $\left( T_1, T_2 \right)$ be a pair of doubly twisted  
contractions on $\clh$. Let $\clh= \clh_{u1}\oplus \clh_{\neg u1}$ 
be the {\textit{canonical decomposition}}
for the contraction $T_1$, where $T_1|_{\clh_{u1}}$ is unitary 
and $T_1|_{\clh_{\neg u1}}$ is c.n.u. Then by the above Theorem 
\ref{main thm-reduce}, the decomposition reduces the other contraction 
$T_2$. Indeed, the contraction $T_2$ breaks into two contractions 
$T_2|_{\clh_{u1}}$ and $T_2|_{\clh_{\neg u1}}$ with respect to 
the decomposition $\clh= \clh_{u1} \oplus \clh_{\neg u 1}$.
Now repeating the process for the contractions $T_2|_{\clh_{u1}}$ 
and $T_2|_{\clh_{\neg u1}}$, we have 
\[
\clh = \clh_{uu} \oplus \clh_{u\neg u} \oplus \clh_{\neg u u} 
\oplus \clh_{\neg u\neg u},
\]
where 
\begin{itemize}
\item $T_1|_{\clh_{uu}}$ and $T_2|_{\clh_{uu}}$ are both unitaries, 

\item $T_1|_{\clh_{u \neg u}}$ is unitary and $T_2|_{\clh_{u \neg u}}$ is c.n.u.,

\item $T_1|_{\clh_{\neg u  u }}$ is c.n.u. and $T_2|_{\clh_{ \neg u  u}}$ is unitary,

\item $T_1|_{\clh_{\neg u \neg u}}$ and $T_2|_{\clh_{\neg u \neg u}}$ are both c.n.u.
\end{itemize}
%This result has been proved by Burdak \cite{BURDAK-DECOMPOSITION}.
For more details one can see the recent paper of the authors \cite{MM-Wold-type}.
\end{rem}

In particular, if the twist $U=I_{\clh}$, then the pair $\left( T,V \right)$ 
of doubly twisted operators becomes doubly commuting and we state in the following
(follows from Theorem \ref{main thm-reduce}).

\begin{cor}\label{DC-reduce}
Let $\left( T,V \right)$ be a pair of doubly commuting operators 
on a Hilbert space $\clh$ such that $T$ is a contraction. Then 
the canonical decomposition $\clh=\clh_u \oplus \clh_{\neg u}$  
for $T$ reduces the operator $V$.   
\end{cor}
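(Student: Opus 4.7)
The plan is to deduce this corollary as a direct specialization of Theorem~\ref{main thm-reduce} to the case where the twist is trivial, namely $U = I_\clh$. First I would observe that saying $(T,V)$ is doubly commuting, i.e., $TV = VT$ and $TV^* = V^*T$, is precisely the same as saying that $(T,V)$ is a doubly twisted pair with respect to the twist $\{U\}$ with $U = I_\clh$. Indeed, with $U = I_\clh$ the defining relations $TV = UVT$ and $VT^* = UT^*V$ collapse to $TV = VT$ and $VT^* = T^*V$, the second being equivalent (by taking adjoints) to $TV^* = V^*T$. The condition $T,V \in \{U\}'$ is automatic since $U = I_\clh$ commutes with every operator.

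Once this translation is in place, the hypothesis of Theorem~\ref{main thm-reduce} is satisfied: $T$ is a contraction and $(T,V)$ is doubly twisted with twist $I_\clh$. Applying that theorem immediately yields that the canonical decomposition $\clh = \clh_u \oplus \clh_{\neg u}$ of $T$ reduces $V$, which is the desired conclusion.

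There is no genuine obstacle here, since the corollary is essentially a restatement of Theorem~\ref{main thm-reduce} under a specialized twist. The only point requiring a brief remark is the cosmetic identification of the two notions (doubly commuting vs.\ doubly twisted with $U = I_\clh$), after which the result follows at once.
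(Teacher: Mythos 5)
Your proof is correct and takes exactly the same route the paper does: the paper states this corollary immediately after Theorem~\ref{main thm-reduce} with the remark that it ``follows from Theorem~\ref{main thm-reduce}'' by taking the twist $U = I_\clh$. Your write-up simply spells out the (routine) identification of ``doubly commuting'' with ``doubly twisted with trivial twist,'' which the paper leaves implicit.
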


\begin{rem}
Since $T \in \{ U \}'$ and $U$ is unitary, the pair 
$\left( T, U \right)$ is doubly commuting and hence the 
canonical decomposition $\clh=\clh_u \oplus \clh_{\neg u}$  
for $T$ reduces $U$.  
\end{rem}

The following decomposition result holds for certain pairs 
of twisted operators.

\begin{thm}\label{Decompositon-lemma1}
Let $(T, V)$ be a pair of twisted operators on a Hilbert space 
$\mathcal{H}$ and let $T$ be a contraction. If $\mathcal{H}=\mathcal{H}_{u} 
\oplus \mathcal{H}_{\neg u}$ is the \textit{canonical decomposition} for $T$ 
and $T|_{\mathcal{H}_{\neg u}}$ is in the class $C_{00}$, then the 
decomposition $\mathcal{H}=\mathcal{H}_{u} \oplus \mathcal{H}_{\neg u}$
reduces $V$.
\end{thm}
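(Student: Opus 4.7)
My plan is to mimic the argument of Theorem~\ref{main thm-reduce}, substituting the $C_{00}$ hypothesis on $T|_{\clh_{\neg u}}$ for the missing doubly-twisted relation $T^*V = U^*VT^*$. The goal is to show that $\clh_{\neg u}$ is invariant under both $V$ and $V^*$, which suffices to conclude that the canonical decomposition reduces $V$.

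Two algebraic facts drive the argument. First, the twist $TV=UVT$ together with $TU=UT$ gives, by a straightforward induction, $T^nV = U^nVT^n$ for every $n\ge 0$, and hence $T^{*n}V^* = V^*T^{*n}U^n$ upon taking adjoints and rearranging. Second, by the Remark following Corollary~\ref{DC-reduce}, the pair $(T,U)$ is automatically doubly commuting, so the canonical decomposition for $T$ reduces $U$; in particular $U^n(\clh_{\neg u}) \subseteq \clh_{\neg u}$ for every $n$. Now fix $h \in \clh_{\neg u}$. The first identity yields $\|T^nVh\| = \|U^nVT^nh\| \le \|V\|\,\|T^nh\| \to 0$, because $T|_{\clh_{\neg u}} \in C_{0.}$. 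Decomposing $Vh = x+y$ with $x\in\clh_u$ and $y\in\clh_{\neg u}$, and exploiting that $T|_{\clh_u}$ is unitary, one gets $\|T^nVh\|^2 = \|x\|^2 + \|T^ny\|^2$, which forces $x=0$ and hence $Vh \in \clh_{\neg u}$. The parallel estimate for $V^*h$ reads $\|T^{*n}V^*h\| = \|V^*T^{*n}U^nh\| \le \|V\|\,\|T^{*n}(U^nh)\| \to 0$, where the crucial points are that $U^nh \in \clh_{\neg u}$ (by the reducibility of $U$) and $T|_{\clh_{\neg u}} \in C_{.0}$; the same orthogonal decomposition argument then gives $V^*h \in \clh_{\neg u}$.

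The main obstacle is the second step, concerning $V^*$. The twist $TV=UVT$ supplies no direct relation between $T^*$ and $V$, so the computation $VD_{T^*}^2 = D_{T^*}^2 V$ used in Theorem~\ref{main thm-reduce} is unavailable here. The $C_{00}$ hypothesis, combined with the reducibility of $\clh_u \oplus \clh_{\neg u}$ under the twist $U$, is exactly the right substitute: it converts the adjoint form $T^{*n}V^* = V^*T^{*n}U^n$ into a vanishing norm estimate on $\clh_{\neg u}$. Once $\clh_{\neg u}$ is shown $V$- and $V^*$-invariant, its orthogonal complement $\clh_u$ is also reducing, completing the proof.
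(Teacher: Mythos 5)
Your approach is essentially the paper's, just expressed vector\,-by-vector instead of block-by-block: both proofs iterate the twist to $T^nV = U^nVT^n$, invoke the reducibility of $U$ (since $(T,U)$ is doubly commuting), and play the $C_{00}$ decay of $T|_{\clh_{\neg u}}$ against the unitarity of $T|_{\clh_u}$ to force the $\clh_u$-component of $Vh$ and of $V^*h$ to vanish. The paper writes $V$ as a $2\times 2$ block matrix on $\clh_u \oplus \clh_{\neg u}$ and shows the off-diagonal entries $B = P_{\clh_u}V|_{\clh_{\neg u}}$ and $C = P_{\clh_{\neg u}}V|_{\clh_u}$ are zero; your vector argument is the same computation phrased differently.

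There is, however, one step that as stated does not close. For the $V^*$ half you assert $\|T^{*n}(U^n h)\| \to 0$ ``because $U^n h \in \clh_{\neg u}$ and $T|_{\clh_{\neg u}} \in C_{.0}$.'' But $C_{.0}$ gives $\|T^{*n} g\| \to 0$ only for each \emph{fixed} $g$; here the argument $U^n h$ moves with $n$, and pointwise strong convergence does not automatically pass to a moving bounded sequence (that would require convergence of $T^{*n}|_{\clh_{\neg u}}$ to $0$ in operator norm, which need not hold). The fix is one line and is already latent in your setup: since $U$ is unitary and $TU=UT$, the pair $(T,U)$ is doubly commuting, so $T^*U=UT^*$ and hence $T^{*n}U^n h = U^n T^{*n} h$, whence $\|T^{*n}(U^n h)\| = \|T^{*n} h\| \to 0$ for the fixed $h$. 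With that line inserted, the proof is complete and matches the paper's.
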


\begin{proof}
Suppose that $\mathcal{H}=\mathcal{H}_{u} \oplus \mathcal{H}_{\neg u}$
is the canonical decomposition for the contraction $T \in \clb(\clh)$
and $T|_{\mathcal{H}_{\neg u}}$ is in the class $C_{00}$. Then the 
matricial representation for $T$ with the canonical decomposition 
$\mathcal{H}=\mathcal{H}_{u} \oplus \mathcal{H}_{\neg u}$ is
$ \begin{bmatrix}
 T_u & O\\ 
 O & T_{\neg u}
\end{bmatrix},$ 
where $T_u = T|_{\mathcal{H}_{u}}$ and 
$T_{\neg u} = T|_{\mathcal{H}_{\neg u}}$.

Since the pair $(T,V)$ is twisted, there is a unitary (twist) 
$U \in \clb(\clh)$ such that
\[
TU=UT, \quad VU=UV \mbox{~~~~and~~~~} TV=UVT.
\]
Now the pair $(T, U)$ is doubly commuting on $\clh$ as $U$ is unitary. 
Therefore, from Corollary \ref{DC-reduce} the canonical decomposition 
$\mathcal{H}=\mathcal{H}_{u} \oplus \mathcal{H}_{\neg u}$ reduces $U$. 
Hence the matricial representation of $U$ on $\mathcal{H}= \mathcal{H}_{u} 
\oplus \mathcal{H}_{\neg u}$ is 
$\begin{bmatrix}
U_1  & O \\
O    & U_2
\end{bmatrix}$,
where $U_1=U|_{\mathcal{H}_{u}}$ and $U_2= U|_{\mathcal{H}_{\neg u}}$ 
are unitaries on $\clh_u$ and $\clh_{\neg u}$, respectively.   
Suppose that the matricial representation for $V$ is
$\begin{bmatrix} 
A & B\\ 
C & D 
\end{bmatrix}$ 
with respect to the decomposition $\mathcal{H}=\mathcal{H}_{u} 
\oplus \mathcal{H}_{\neg u}$. Then the condition $TV=UVT$ 
implies that
\[ 
\begin{bmatrix} 
T_u A & T_u B\\ 
T_{\neg u} C & T_{\neg u} D 
\end{bmatrix}
= \begin{bmatrix}
	U_1  & O \\
	O    & U_2
\end{bmatrix}
\begin{bmatrix} 
A T_u & B T_{\neg u}
\\ C T_u & D T_{\neg u}
\end{bmatrix}
= \begin{bmatrix} 
	U_1 A T_u & U_1B T_{\neg u}
	\\ U_2C T_u & U_2D T_{\neg u}
\end{bmatrix}.
\]
Therefore,
\[
 T_u B = U_1 B T_{\neg u}  \ \text{and}  \ \ 
 T_{\neg u} C = U_2 C T_u.
\]
Thus
\[
 T_u^n B = U_1^n B T_{\neg u}^n  
\ \ \text{and} \ \
 T_{\neg u}^n C = U_2^n C T_u^n
 \quad \mbox{for all}~ n\geq 1.
\]
Now for any $h \in \clh_{\neg u}$,
\[
\| Bh \| = \| T_u^n B h \| = \| U_1^nB T_{\neg u}^nh \|=\| B T_{\neg u}^nh \| \leq 
\|B \|  \| T_{\neg u}^nh \| \rightarrow 0
\]
as $n \rightarrow \infty$. Therefore, $Bh = 0$ for all 
$h \in \clh_{\neg u}$. This implies that $B=0$. Again for 
any $h \in \clh_{\neg u}$,
\[
\| C^*h \| = \| T_u^{*n} C^* h \|= \|U_2^{n} C^* T_{\neg u}^{*n} h \| 
= \| C^* T_{\neg u}^{*n} h \| 
\leq \|C^* \| \| T_{\neg u}^{*n}h \| \rightarrow 0
\]
as $n \rightarrow \infty$. This implies that $ C^*h =0$ for all 
$h \in \clh_{\neg u}$. Therefore $C=0$. Hence $\mathcal{H}_{u}$ 
reduces $V$. Therefore, the decomposition 
$\mathcal{H}=\mathcal{H}_{u} \oplus \mathcal{H}_{\neg u}$
reduces $V$.

This completes the proof.	
\end{proof}

There are few remarks and consequences of the above theorem:	

\begin{rem}
One of the important facts in the above theorem is that the decomposition 
for the pair of operators does not require the doubly twisted condition. 
\end{rem}

\begin{rem}
If the c.n.u. parts for a pair of twisted contractions are in $C_{00}$, 
then one can find decomposition for the twisted pairs which need not be 
doubly twisted  (see Example \ref{Ex-nondoubly-commuting}).  
\end{rem}

Now we will give some concrete examples of pairs of twisted operators 
satisfying certain properties.

\begin{ex}
Let $\Lambda=\{(i,j):i \geq 0 \ \text{or} \ j \geq 0\}$ and 
${\{e_{i,j}\}}_{(i,j)\in \Lambda}$ be a sequence of orthonormal vectors 
in some Hilbert space $\mathcal{K}$. Let 
$\mathcal{H}= \overline{\text{span}}{\{e_{i,j}\}}_{(i,j) \in \Lambda}$.
Suppose that $T_1,T_2$ are two operators on $\mathcal{H}$ such that 
\[
T_1 e_{i,j}=e_{i+1,j} \quad \mbox{and} \quad T_2 e_{i,j}=e_{i,j+1} \quad 
\mbox{for}~ (i,j) \in \Lambda.
\]
Clearly, $(T_1, T_2)$ is a pair of commuting isometries, that is,
twisted isometries on $\mathcal{H}$ with respect to the twist $I_{\clh}$.
Suppose that $\mathcal{H}={\mathcal{H}}_{u2} \oplus {\mathcal{H}}_{s2} $ 
is the Wold decomposition for $T_2$ on $\mathcal{H}$. Then it is easy to see 
that
\[
\mathcal{H}_{u2}= \bigcap_{n=0}^{\infty}T_2^n \clh = \overline{\text{span}}
\{e_{i,j} : i\geq 0 \}.
\]
Now $e_{0,1} \in \mathcal{H}_{u2}$ but $e_{-1,1} \notin \mathcal{H}_{u2}$. 
Again $T_1^*e_{0,1}=e_{-1,1}$. Therefore, $\mathcal{H}_{u2}$ does not reduce 
the operator $T_1$ and hence, by Theorem \ref{main thm-reduce}, the pair 
$(T_1,T_2)$ is not doubly twisted. Also $T_2|_{{\mathcal{H}}_{s2}}$ does not 
belong to $C_{00}$. Similarly, $T_1|_{{\mathcal{H}}_{s1}}$ does not belong 
to $C_{00}$. Hence the pair $(T_1, T_2)$ of twisted operators is neither 
doubly twisted nor $T_1|_{{\mathcal{H}}_{s1}}$, $T_2|_{{\mathcal{H}}_{s2}}$ 
belong to $C_{00}$. 
\end{ex}

\begin{ex}\label{Counter example-Twisted isometries}
Let $H^2(\mathbb{D})$ denotes the Hardy space over the unit disc $\mathbb{D}$. 
Now for fixed $r$ with $|r|=1$, define a weighted shift operator $S_r$ on 
$H^2(\mathbb{D})$ as
\[
S_r z^n= r^{n+1} z^{n+1}\hspace{1cm}  (n \in \mathbb{Z}_+) 
\]
where $\{1,z,z^2, \dots \}$ is an orthonormal basis for 
$H^2(\mathbb{D})$. Let $M_z$ be the multiplication operator on 
$H^2(\mathbb{D})$ by the coordinate function $z$. Then 
\begin{equation*}
		( M_z S_r)(z ^n)= r^{n+1} z^{n+2} 
		\quad \text{and} \quad
		(  S_r M_z)(z^n)= r^{n+2} z^{n+2}
		\ \ \text{for  $n \in \mathbb{Z}_+$}.
\end{equation*}
Again,
\begin{equation*}
		( M_z^* S_r)(z ^n)=
		r^{n+1} z^{n} \text{ ~~$\forall \ n \geq 0$}\\
		%0 & \text{if $n = 0$}
\end{equation*}
and 
\begin{equation*}
		(S_r M_z^*)(z ^n)=
		\begin{cases}
			r^{n} z^{n} & \text{if $n \geq 1$}\\
			0 & \text{if $n = 0$}.
		\end{cases}
\end{equation*}
Hence $ S_r M_z=rM_zS_r$ but $ M_z^*S_r \neq  rS_r M_z^*$. Now 
define a unitary $U$ on $H^2(\mathbb{D})$ as $U(z^n)=rz^n$
for $n \geq 0$. Then it is easy to see that the pair $(S_r, M_z)$ is 
a twisted contractions with the twist $U$ on $H^2(\mathbb{D})$, but 
not doubly twisted. Also the c.n.u. parts of both the operators 
are $H^2(\mathbb{D})$, and do not belong to $C_{00}$. 
\end{ex}

\begin{ex}\label{Ex-nondoubly-commuting}		
Let $\clk$ be a Hilbert space and $k$ be a fixed positive integer. 
Define an operator $T$ on the $k$-fold direct sum $\clh={\clk 
\oplus \cdots \oplus \clk}$ as
\[
T(h_1,h_2,...,h_k) = (0,h_1,...,h_{k-1}) \quad \mbox{for}~h_i \in \clk.
\]
Then $(T, T)$ is a twisted pair with a twist $I_{\clh}$ (or commuting pair)
of truncated shifts of index $k$ on $\clh$. Clearly, the adjoint of $T$ 
on $\clh$, denoted as $T^*$, is defined by
\[
T^*(h_1,h_2,...,h_k) = (h_2, \ldots, h_{k}, 0).
\] 
It is easy to see that
\[
TT^* \neq T^*T.
\]
Hence the pair $(T, T)$ is not doubly twisted with a twist $I_{\clh}$ 
(or not doubly commuting pair). Now from the canonical decomposition for 
the contraction $T$, the c.n.u. part is $\clh$ (as unitary part is absent). 
Further, $T$ is in $C_{00}$, in general. 
\end{ex}

\begin{ex}\label{Example-doubly twisted }
The weighted shift $M_z^{\alpha}$ on the Hardy space $H^2(\mathbb{D})$ is 
defined by $M_z^{\alpha}(f)= \alpha zf$ for all $f \in  H^2(\mathbb{D})$, 
where $z$ is the co-ordinate function and $|\alpha| \leq 1$. 
For each fixed $r$ with $|r|=1$, define an operator $A_r$ on $H^2(\mathbb{D})$ 
as
\[
A_r z^n= r^n z^n \hspace{1cm}  (n \in \mathbb{Z}_+), 
\]
where $\{1,z,z^2, \dots \}$ is an orthonormal basis for $H^2(\mathbb{D})$. 
Then 
\begin{equation*}
( M_z^{\alpha} A_r)(z ^n)=  \alpha r^n z^{n+1} \quad \text{and} \quad
( A_r M_z^{\alpha})(z^n) = \alpha r^{n+1} z^{n+1} ~~ \text{for}~~ n \in \mathbb{Z}_+.
\end{equation*}
Again
\begin{equation*}
\big([M_z^{\alpha}]^* A_r\big)(z ^n)=
\begin{cases}
			\bar{\alpha} r^n z^{n-1}, & \text{if $n \geq 1$}\\
			0 & \text{if $n = 0$},
\end{cases}
\end{equation*}
and 
\begin{equation*}
		\big(A_r [M_z^{\alpha}]^*\big)(z ^n)=
		\begin{cases}
			\bar{\alpha} r^{n-1} z^{n-1} & \text{if $n \geq 1$}\\
			0 & \text{if $n = 0$}.
		\end{cases}
\end{equation*}
Hence $ A_r M_z^{\alpha}=rM_z^{\alpha}A_r$ and    
$ [M_z^{\alpha}]^*A_r= rA_r [M_z^{\alpha}]^*$. 
Clearly, $(A_r, M_z^{\alpha})$ is a pair of doubly twisted operators 
with respect to the twist $U=rI$ on $H^2(\mathbb{D})$.
Here c.n.u. part of $A_r, M_z^{\alpha}$ is $\{0\}$ and $H^2(\mathbb{D})$,
respectively. Moreover, $M_z^{\alpha} \notin C_{00}$.
\end{ex}

\begin{ex}
Let $T_1, T_2$ be the contractions on $\ell^2(\mathbb{Z})$ defined as
\[
T_1(e_n)= \frac{r^n}{4} e_{n+1}  \hspace{1cm}\text{and}  \hspace{1cm}
T_2(e_n)= \lambda e_{n+1} \hspace{1cm}   
\]
with $|\lambda|<  1$, $|r|=1$ and $\{e_n\}$ is the standard orthonormal 
basis for $\ell^2(\mathbb{Z})$. Let $U =rI$ be a unitary operator on 
$\ell^2(\mathbb{Z})$. Then
\[
T_1T_2(e_n)= \frac{\lambda r^{n+1}}{4}e_{n+2}, \hspace{0.5cm}  
T_2T_1(e_n)= \frac{\lambda r^{n}}{4}e_{n+2}. 
\]
Again 
\[
T_1T_2^*(e_n)= \frac{\overline{\lambda} r^{n-1}}{4} e_n, \hspace{0.5cm} 
T_2^*T_1(e_n)= \frac{\overline{\lambda} r^{n}}{4} e_n.
\]
Clearly, $ T_1T_2=UT_2T_1$ and $T_1^*T_2=U^*T_2 T_1^*$. Therefore, 
$(T_1,T_2)$ is a pair of doubly twisted contractions on $\ell^2(\mathbb{Z})$
with a twist $U$. Moreover, the c.n.u. part for $T_1, T_2$ is $\ell^2(\mathbb{Z})$
and both are in $C_{00}$.
\end{ex}

The following particular result follows from the above Theorem
\ref{Decompositon-lemma1}.

\begin{cor}\label{cor2}
Let $\left( T,V \right)$ be a pair of twisted operators 
on a Hilbert space $\clh$ such that $T$ is a contraction.  
Let $\clh= \clh_u \oplus \clh_{\neg u}$ be the canonical 
decomposition for $T$. If $T|_{\clh_{\neg u}} \in C_{0.}$
(or $T|_{\clh_{\neg u}} \in C_{.0}$), then $\clh_{\neg u}$ 
(or $\clh_{u}$) is invariant under $V$.   
\end{cor}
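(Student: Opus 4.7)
The plan is to recycle the block operator argument from the proof of Theorem \ref{Decompositon-lemma1}, but to audit which step actually required the full $C_{00}$ hypothesis and which only needed one of $C_{0\cdot}$ or $C_{\cdot 0}$. Since $U$ is unitary and commutes with $T$, the pair $(T,U)$ is doubly commuting, so by Corollary \ref{DC-reduce} the canonical decomposition reduces $U$, giving $U = U_1 \oplus U_2$ on $\clh = \clh_u \oplus \clh_{\neg u}$. Writing also $T = T_u \oplus T_{\neg u}$ and
\[
V = \begin{bmatrix} A & B \\ C & D \end{bmatrix}
\]
with respect to this decomposition, the twisted identity $TV = UVT$ yields, exactly as in Theorem \ref{Decompositon-lemma1}, the intertwining relations
\[
T_u^n B = U_1^n B T_{\neg u}^n \quad\text{and}\quad T_{\neg u}^n C = U_2^n C T_u^n \qquad (n \geq 1).
\]

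For the first conclusion, assume $T|_{\clh_{\neg u}} \in C_{0\cdot}$. Using that $T_u$ and $U_1$ are unitary on $\clh_u$, I would estimate, for $h \in \clh_{\neg u}$,
\[
\|Bh\| = \|T_u^n B h\| = \|U_1^n B T_{\neg u}^n h\| = \|B T_{\neg u}^n h\| \leq \|B\|\, \|T_{\neg u}^n h\| \longrightarrow 0,
\]
which forces $B = 0$, i.e., $\clh_{\neg u}$ is invariant under $V$. Note that only the strong convergence of $T_{\neg u}^n$ (and not of $T_{\neg u}^{*n}$) is invoked here.

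For the second conclusion, assume $T|_{\clh_{\neg u}} \in C_{\cdot 0}$. Taking adjoints of $T_{\neg u}^n C = U_2^n C T_u^n$ and rearranging using the commutation $U_1 C^* = C^* U_2$ (obtained from $UV = VU$) together with $U_2 T_{\neg u} = T_{\neg u} U_2$, I would arrive at $T_u^{*n} C^* = U_1^n C^* T_{\neg u}^{*n}$. The symmetric estimate
\[
\|C^* h\| = \|T_u^{*n} C^* h\| = \|C^* T_{\neg u}^{*n} h\| \leq \|C^*\|\, \|T_{\neg u}^{*n} h\| \longrightarrow 0
\]
then forces $C = 0$, i.e., $\clh_u$ is invariant under $V$.

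The main (and essentially only) subtle point is accounting: Theorem \ref{Decompositon-lemma1} needed both $C_{0\cdot}$ and $C_{\cdot 0}$ in order to kill both off-diagonal blocks $B$ and $C$ simultaneously and thus obtain a reducing decomposition, whereas a one-sided hypothesis kills only one block and so produces only one-sided invariance. No new technical obstacle arises beyond the manipulation already performed for the stronger theorem.
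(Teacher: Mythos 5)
Your proof is correct and is exactly the argument the paper intends; the corollary is stated as a direct consequence of Theorem \ref{Decompositon-lemma1} precisely because, as you observe, the block-matrix argument there already separates into a $C_{0.}$ step (killing $B$, giving invariance of $\clh_{\neg u}$) and a $C_{.0}$ step (killing $C$, giving invariance of $\clh_u$). Your explicit derivation of $T_u^{*n}C^* = U_1^n C^* T_{\neg u}^{*n}$ via $U_1C^*=C^*U_2$ and $U_2T_{\neg u}=T_{\neg u}U_2$ is in fact cleaner than the corresponding line in the paper's proof of Theorem \ref{Decompositon-lemma1}, which writes $U_2^n$ on the wrong side of $C^*$.
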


Now suppose that $\left( T, V \right)$ is a pair of twisted operators 
on a Hilbert space $\clh$ such that $V$ is isometry. Then from the Wold
decomposition for $V$, we have $\clh= \clh_u \oplus \clh_{s}$, 
where $V|_{\clh_u}$ is unitary and $V|_{\clh_s}$ is shift,
that is, $V|_{\clh_s}$ is in $C_{.0}$. If $V$ is a co-isometry, then
we have $\clh= \clh_u \oplus \clh_{s}$, where $V|_{\clh_u}$ is unitary 
and $V|_{\clh_{s}}$ is co-shift, i.e., $V|_{\clh_s}$ is in $C_{0.}$. 
Therefore, we have the following results from the above Corollary.

\begin{lem}\label{lem1}
Let $\left( T,V \right)$ be a pair of twisted  operators 
on a Hilbert space $\clh$ such that $V$ is isometry (or $V$ is co-isometry).  
Let $\mathcal{H}=\mathcal{H}_{u} \oplus \mathcal{H}_{s}$ 
be the Wold decomposition for $V$. Then $\clh_{u}$ (or $\clh_{s}$)
is invariant under $T$. 
\end{lem}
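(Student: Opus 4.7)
The plan is to view the lemma as a direct specialization of Corollary \ref{cor2} with the roles of the two operators interchanged. For this, I first observe that the twisted-pair relation is symmetric under swapping the two entries: if $(T,V)$ is twisted with twist $U$, so that $TV=UVT$ and $U$ commutes with both $T$ and $V$, then left multiplication of $TV=UVT$ by $U^{*}=U^{-1}$ gives $VT=U^{*}TV$, and $U^{*}$ still commutes with $T$ and $V$ (since $TU=UT$ implies $T U^{*}=U^{*}T$, and similarly for $V$). Hence $(V,T)$ is itself a twisted pair, now with twist $U^{*}$.

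When $V$ is an isometry, it is in particular a contraction, and its Wold--von Neumann decomposition $\clh=\clh_u\oplus\clh_s$ coincides with the canonical decomposition for $V$ regarded as a contraction. The pure part $V|_{\clh_s}$ is a shift, hence belongs to the class $C_{.0}$. Applying Corollary \ref{cor2} to the twisted pair $(V,T)$, with $V$ playing the role of the contraction in the hypothesis, then yields that $\clh_u$ is invariant under $T$. The co-isometry case is dual: here $V|_{\clh_s}$ is a co-shift and therefore lies in $C_{0.}$, so the other clause of Corollary \ref{cor2} applied to $(V,T)$ gives instead that $\clh_s$ is invariant under $T$.

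I do not expect any substantive obstacle. The only verifications that require care are the symmetry of the twisted-pair condition under swapping the two operators (which costs only a conjugation of the twist), and the identification of the shift (respectively co-shift) part of the Wold decomposition with the class $C_{.0}$ (respectively $C_{0.}$). Everything else is a direct quotation of Corollary \ref{cor2}, and no matricial computation is needed beyond that already performed in Theorem \ref{Decompositon-lemma1}.
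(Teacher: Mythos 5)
Your proof is correct and follows essentially the same route the paper implicitly takes: it reads Lemma~\ref{lem1} off Corollary~\ref{cor2} after identifying the Wold decomposition of the isometry (resp.\ co-isometry) with its canonical decomposition and noting that the shift part lies in $C_{.0}$ (resp.\ the co-shift part in $C_{0.}$). The one point you spell out that the paper leaves tacit --- that $(V,T)$ is again a twisted pair with twist $U^{*}$, so that Corollary~\ref{cor2} may be invoked with $V$ in the role of the contraction --- is exactly the role-swap needed, and you handle it correctly.
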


We now present the the well-known result of S\l{}o\'{c}inski 
\cite{SLOCINSKI-WOLD} for a pair of doubly twisted isometries by 
using the above results. One can see our recent paper \cite{MM-Wold-type}
for more results.

\begin{thm}\label{Slocinski result-twist}
Let $\left( V_1, V_2 \right)$ be a pair of doubly twisted isometries 
on a Hilbert space $\mathcal{H}$. Then there is a unique decomposition 
\[
\mathcal{H}=\mathcal{H}_{uu} \oplus \mathcal{H}_{us} \oplus 
\mathcal{H}_{su} \oplus \mathcal{H}_{ss},
\]
where $\mathcal{H}_{uu},  \mathcal{H}_{us}, \mathcal{H}_{su},$ and 
$\mathcal{H}_{ss}$ are the subspaces reducing $V_1$ and $V_2$ such that
\begin{itemize}
\item	$V_1|_{\mathcal{H}_{uu}}, V_2|_{\mathcal{H}_{uu}}$ are unitary operators,
		
\item   $V_1|_{\mathcal{H}_{us}}$ is unitary, $V_2|_{\mathcal{H}_{us}}$ is shift,
		
\item  $V_1|_{\mathcal{H}_{su}}$ is shift, $V_2|_{\mathcal{H}_{su}}$ is unitary, 
		
\item $V_1|_{\mathcal{H}_{ss}}$,$V_2|_{\mathcal{H}_{ss}}$ are shifts.
\end{itemize} 	
\end{thm}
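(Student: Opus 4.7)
The plan is to obtain the four-fold decomposition by applying Theorem \ref{main thm-reduce} twice, first splitting $\mathcal{H}$ according to the Wold decomposition of $V_1$ and then splitting each summand according to the Wold decomposition of $V_2$ restricted to that summand. Since $V_1$ is an isometry, its canonical decomposition coincides with its Wold decomposition (Theorem \ref{thm-Wold}): $\mathcal{H} = \mathcal{H}_{u\cdot} \oplus \mathcal{H}_{s\cdot}$ with $V_1|_{\mathcal{H}_{u\cdot}}$ unitary and $V_1|_{\mathcal{H}_{s\cdot}}$ a shift. Because $(V_1,V_2)$ is doubly twisted and $V_1$ is a contraction, Theorem \ref{main thm-reduce} implies that both $\mathcal{H}_{u\cdot}$ and $\mathcal{H}_{s\cdot}$ reduce $V_2$.

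The next step is to iterate the argument on each summand. For this I need the pair of restrictions $(V_1|_{\mathcal{H}_{u\cdot}}, V_2|_{\mathcal{H}_{u\cdot}})$ to itself be doubly twisted, and similarly on $\mathcal{H}_{s\cdot}$. This is the main point to verify: since $V_1, V_2 \in \{U\}'$ and $U$ is unitary, the Remark following Corollary \ref{DC-reduce} shows that the canonical decomposition of $V_1$ also reduces $U$. Writing $U = U_1 \oplus U_2$ with respect to $\mathcal{H}_{u\cdot} \oplus \mathcal{H}_{s\cdot}$, each $U_i$ is unitary, and the identities $V_1V_2 = UV_2V_1$, $V_1^*V_2 = U^*V_2V_1^*$, together with the commutation of $V_i$ with $U$, pass to each summand. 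Hence $(V_1|_{\mathcal{H}_{u\cdot}}, V_2|_{\mathcal{H}_{u\cdot}})$ and $(V_1|_{\mathcal{H}_{s\cdot}}, V_2|_{\mathcal{H}_{s\cdot}})$ are both doubly twisted pairs of isometries with twists $U_1, U_2$ respectively.

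Now I apply Theorem \ref{main thm-reduce} again, swapping the roles of the two operators: the Wold decomposition of $V_2|_{\mathcal{H}_{u\cdot}}$ gives $\mathcal{H}_{u\cdot} = \mathcal{H}_{uu} \oplus \mathcal{H}_{us}$ where $V_2|_{\mathcal{H}_{uu}}$ is unitary and $V_2|_{\mathcal{H}_{us}}$ is a shift, and this decomposition reduces $V_1|_{\mathcal{H}_{u\cdot}}$. Since $V_1|_{\mathcal{H}_{u\cdot}}$ is already unitary, its restrictions to the two summands remain unitary. The same argument on $\mathcal{H}_{s\cdot}$ produces $\mathcal{H}_{s\cdot} = \mathcal{H}_{su} \oplus \mathcal{H}_{ss}$ with $V_1|_{\mathcal{H}_{su}}$ and $V_1|_{\mathcal{H}_{ss}}$ still shifts (as restrictions of a shift to reducing subspaces are shifts), and $V_2|_{\mathcal{H}_{su}}$ unitary, $V_2|_{\mathcal{H}_{ss}}$ a shift. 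Assembling gives the required four-fold orthogonal decomposition with the stated properties on each block.

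For uniqueness, I would give an intrinsic description of each summand analogous to the characterization in Theorem 2.1: for instance $\mathcal{H}_{uu}$ is the largest closed subspace on which both $V_1$ and $V_2$ act unitarily, equivalently $\mathcal{H}_{uu} = \bigcap_{n \geq 0} V_1^n \mathcal{H} \cap \bigcap_{n \geq 0} V_2^n \mathcal{H}$ intersected appropriately, and similarly $\mathcal{H}_{ss} = \bigoplus_{m,n \geq 0} V_1^m V_2^n \mathcal{W}$ for a suitable joint wandering subspace. The anticipated obstacle is precisely the bookkeeping on the twist in the iterated step: one must ensure the twist restricts consistently so that Theorem \ref{main thm-reduce} can legitimately be invoked on each subspace; once that is in place, the rest is a routine double application of Wold together with the uniqueness of the Wold decomposition at each stage.
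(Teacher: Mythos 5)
Your proposal is correct and follows essentially the same route as the paper: Wold-decompose with respect to $V_1$, use Theorem \ref{main thm-reduce} to see the summands reduce $V_2$ and the twist $U$, verify the restricted pairs are doubly twisted with the restricted twist, then Wold-decompose each summand with respect to $V_2$ and invoke Theorem \ref{main thm-reduce} again. The detail you correctly flag as the crucial point---that $U$ must split along the decomposition so that the restricted pairs remain doubly twisted---is handled in the paper exactly as you propose.
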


\begin{proof}
Suppose $\left( V_1, V_2 \right)$ is a pair of doubly twisted 
isometries with respect to a twist $U$ on $\mathcal{H}$. Then 
the Wold decomposition for $V_1$ gives
\[
\clh=\clh_u \oplus \clh_s,
\]
where $\clh_u$ reduces $V_1$; $V_1|_{\clh_u}$ is unitary and 
$V_1|_{\clh_s}$ is shift. Since $(V_1,V_2)$ is a doubly twisted 
pair on $\clh$, by Theorem \ref{main thm-reduce} the subspaces $\clh_u$ 
and $\clh_s$ reduce the isometry $V_2$ and the unitary $U$. 
Consequently, $U|_{\clh_u}$ and $U|_{\clh_s}$ are unitary operators
on $\clh_u$ and $\clh_s$, respectively.
Now Wold-von 
Neumann decomposition for the isometries $V_2|_{\clh_u}$ on $\clh_u$ 
and $V_{2}|_{\clh_s}$ on $\clh_s$ yield
\[
\clh_u=\clh_{uu} \oplus \clh_{us}, \quad \mbox{and} \quad \clh_s
=\clh_{su} \oplus \clh_{ss},
\]
where $\clh_{uu}$ and $\clh_{su}$ reduce $V_2$ to unitary operators 
and $\clh_{us},\clh_{ss}$ reduce $V_2$ to unilateral shifts. 
Again the pairs $(V_{1}|_{\clh_u}, V_{2}|_{\clh_u})$ and 
$(V_{1}|_{\clh_s}, V_{2}|_{\clh_s})$ are doubly twisted with respect 
to twist $U|_{\clh_u}$ and $U|_{\clh_s}$ on $\clh_u$ and $\clh_s$, 
respectively. Therefore, by Theorem \ref{main thm-reduce} the subspaces 
$\clh_{uu}$ and $\clh_{us}$ reduce the unitary $V_{1}|_{\clh_u}$ 
as well as subspaces $\clh_{su}$ and $\clh_{ss}$ reduce the shift $V_1|_{\clh_s}$.

This completes the proof.
\end{proof}

Every completely non-unitary co-isometry is a co-shift (that is, 
adjoint of shift) and hence it is in $C_{0.}$. If $T \in \clb(\clh)$ 
is co-isometry, then the canonical decomposition $\clh= \clh_0 \oplus 
\clh_1$ reduce $T$, where $T|_{\clh_0}$ is unitary and $T|_{\clh_1}$ 
is in $C_{0.}$. So in particular, if $(V, W)$ is a twisted pair
consisting of an isometry and a co-isometry, then it is doubly twisted
(For proof see Section 5) and hence we have the following decomposition
result. The proof is omitted as it is similar to the above proof.

\begin{cor}
Let $\left( V, W \right)$ be a pair of twisted operators on a 
Hilbert space $\clh$ such that $V$ is isometry and $W$ is co-isometry. 
Then there is a unique decomposition 
\[
\mathcal{H}=\mathcal{H}_{uu} \oplus \mathcal{H}_{ub} \oplus 
\mathcal{H}_{su} \oplus \mathcal{H}_{sb}
\]
where $\mathcal{H}_{uu},  \mathcal{H}_{ub}, \mathcal{H}_{su}$, and 
$\mathcal{H}_{sb}$ are the subspaces reducing $V, W$ such that
\begin{itemize}
\item	$V|_{\mathcal{H}_{uu}}$ and $W|_{\mathcal{H}_{uu}}$ are unitary operators,
		
\item   $V|_{\mathcal{H}_{ub}}$ is unitary and $W|_{\mathcal{H}_{ub}}$ is co-shift,
		
\item  $V|_{\mathcal{H}_{su}}$ is shift and $W|_{\mathcal{H}_{su}}$ is unitary, 
		
\item $V|_{\mathcal{H}_{sb}}$ is shift and $W|_{\mathcal{H}_{sb}}$ is co-shift.
\end{itemize} 	
\end{cor}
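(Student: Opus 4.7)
The plan is to reduce this corollary to repeated applications of Theorem~\ref{main thm-reduce}, exactly in the spirit of the proof of Theorem~\ref{Slocinski result-twist}, once one knows that the pair $(V,W)$ is in fact doubly twisted. The paper flags that the upgrade from ``twisted'' to ``doubly twisted'' for such mixed pairs is proved in Section~5, so I would begin by simply invoking that fact: there is a twist $U$ on $\clh$ with $V,W \in \{U\}'$, $VW = UWV$, and also $V^*W = U^*WV^*$.

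Next I would apply the Wold--von Neumann decomposition to the isometry $V$, obtaining $\clh = \clh_u \oplus \clh_s$ with $V|_{\clh_u}$ unitary and $V|_{\clh_s}$ a unilateral shift. Since this is the canonical decomposition of the contraction $V$ and $(V,W)$ is doubly twisted, Theorem~\ref{main thm-reduce} gives that $\clh_u$ and $\clh_s$ reduce both $W$ and the twist $U$. Thus $W|_{\clh_u}$ and $W|_{\clh_s}$ are again co-isometries, $U|_{\clh_u}$, $U|_{\clh_s}$ are unitaries, and the restricted pairs $(V|_{\clh_u}, W|_{\clh_u})$ and $(V|_{\clh_s}, W|_{\clh_s})$ are doubly twisted with respect to the respective restricted twists.

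Now I would apply the canonical decomposition to the co-isometry $W$ on each of these summands. Since every c.n.u.\ co-isometry is a co-shift, this yields
\[
\clh_u = \clh_{uu} \oplus \clh_{ub}, \qquad \clh_s = \clh_{su} \oplus \clh_{sb},
\]
where $W|_{\clh_{uu}}, W|_{\clh_{su}}$ are unitary and $W|_{\clh_{ub}}, W|_{\clh_{sb}}$ are co-shifts. Applying Theorem~\ref{main thm-reduce} a second time (with $W$ now playing the role of the contraction) to each of the doubly twisted restricted pairs, the four subspaces $\clh_{uu}, \clh_{ub}, \clh_{su}, \clh_{sb}$ all reduce $V$ as well. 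Assembling the pieces, $V$ and $W$ act on each summand as listed in the statement.

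For uniqueness I would just invoke the uniqueness of the Wold decomposition applied to $V$ and then to $W$ on each reducing summand. The main obstacle in this argument is not here but in the deferred fact that a twisted pair consisting of an isometry and a co-isometry is automatically doubly twisted; once that is granted, the decomposition above is essentially a cosmetic variant of Theorem~\ref{Slocinski result-twist}, which is why the paper elects to omit the details.
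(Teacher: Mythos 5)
Your proposal is correct and follows exactly the route the paper intends: the paper explicitly defers the ``twisted implies doubly twisted'' upgrade to Section~5 (Corollary~5.4) and then omits the decomposition argument as ``similar to the above proof,'' meaning the iterated application of Theorem~\ref{main thm-reduce} used for Theorem~\ref{Slocinski result-twist}, which is precisely what you carried out.
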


In the next result, we obtain an explicit Wold-type decomposition for a 
pair of twisted contractions satisfying certain conditions.

\begin{thm}
Let $\left( T_1, T_2 \right)$ be a pair of twisted contractions
on a Hilbert space $\clh$. Let $\clh=\clh_{ui} \oplus \clh_{\neg u i}$ 
be the canonical decomposition for $T_i$ for $i=1, 2$. If 
$T_i|_{\clh_{\neg u i}}$ is in $C_{00}$ for $i=1, 2$, then there is a 
unique decomposition 
\[
\mathcal{H}=\mathcal{H}_{uu} \oplus \mathcal{H}_{u \neg u} \oplus 
\mathcal{H}_{ \neg u u} \oplus \mathcal{H}_{ \neg u \neg u},
\]
where $\mathcal{H}_{uu},  \mathcal{H}_{u \neg u}, \mathcal{H}_{ \neg u u},$ and 
$ \mathcal{H}_{ \neg u \neg u}$ are $(T_1, T_2)$ reducing subspaces of $\clh$  
such that
\begin{itemize}
\item	$ T_1|_{\mathcal{H}_{uu}}, T_2|_{\mathcal{H}_{uu}}$ are unitary operators,
		 		
\item   $ T_1|_{\mathcal{H}_{u \neg u}}$ is unitary, $T_2|_{\mathcal{H}_{u \neg u}}$ 
is completely non-unitary,
		 		
\item  $ T_1|_{\mathcal{H}_{ \neg u u}}$ is completely non-unitary, 
$T_2|_{\mathcal{H}_{ \neg u u}}$ is unitary, 
		 		
\item $ T_1|_{\mathcal{H}_{ \neg u \neg u}}$, 
$T_2|_{\mathcal{H}_{ \neg u \neg u}}$ are completely non-unitary operators.
\end{itemize} 	
Furthermore, the decomposition spaces are as follows
\begin{align*}
&\clh_{uu}=
\bigcap_{ m_2 \in \mathbb{Z}_{+}} [\ker ((I -T_2^{* m_2} T_2^{ m_2})|_{\clh_{u1}}) 
\cap \ker ((I -T_2^{ m_2} T_2^{* m_2})|_{\clh_{u1}})],\\
& \clh_{u\neg u}= \bigvee_{  m_2 \in \mathbb{Z}_{+}} \{ (I-T_2^{* m_2}T_2^{ m_2}) \clh_{u1} \cup 
(I-T_2^{ m_2}T_2^{* m_2}){\clh_{u1}} \}, \\
&\clh_{\neg u u}=
\bigcap_{ m_2 \in \mathbb{Z}_{+}} [\ker ( (I-T_2^{*m_2} T_2^{ m_2})|_{\clh_{\neg u 1}})
\cap \ker ((I -T_2^{ m_2} T_2^{* m_2}) |_{\clh_{\neg u 1}})], \\
& \clh_{\neg u\neg u}= \bigvee_{ m_2 \in \mathbb{Z}_{+}} \{ (I-T_2^{* m_2}T_2^{ m_2}) \clh_{\neg u1} \cup 
		(I-T_2^{ m_2}T_2^{* m_2}){\clh_{\neg u1}} \}, 
\end{align*}
and
\begin{align*}
&\clh_{u1}= \bigcap_{ m_1 \in \mathbb{Z}_{+}}[\ker(I- T_1^{* m_1} T_1^{ m_1}) 
		\cap \ker (I- T_1^{ m_1} T_1^{* m_1} )], \\
& \clh_{\neg u1}= \bigvee_{  m_1 \in \mathbb{Z}_{+}} \{ (I-T_1^{* m_1}T_1^{ m_1})\clh
		\cup (I-T_1^{ m_1} T_1^{* m_1})\clh \}.
\end{align*}	
\end{thm}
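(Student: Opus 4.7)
The plan is to iterate Theorem \ref{Decompositon-lemma1} twice, refining the canonical decompositions of $T_1$ and $T_2$ into a common fourfold decomposition of $\clh$. First, since $(T_1, T_2)$ is twisted with $T_1$ a contraction and $T_1|_{\clh_{\neg u 1}} \in C_{00}$, Theorem \ref{Decompositon-lemma1} gives that $\clh = \clh_{u 1} \oplus \clh_{\neg u 1}$ reduces $T_2$. Moreover, since $(T_1, U)$ is doubly commuting, Corollary \ref{DC-reduce} ensures this decomposition also reduces the twist $U$, so each summand carries a twisted sub-pair with a unitary twist.

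Next, I would apply Theorem \ref{Decompositon-lemma1} to the pairs $(T_2|_{\clh_{u 1}}, T_1|_{\clh_{u 1}})$ and $(T_2|_{\clh_{\neg u 1}}, T_1|_{\clh_{\neg u 1}})$, which are twisted with twist $U^{*}$ restricted to the respective summand. The hypothesis to verify is that the c.n.u.\ part of each of $T_2|_{\clh_{u 1}}$ and $T_2|_{\clh_{\neg u 1}}$ lies in $C_{00}$. This is the only non-bookkeeping step, and follows from the general fact: if $T$ is a contraction with $T|_{\clh_{\neg u}} \in C_{00}$ and $M$ is any $T$-reducing subspace of $\clh$, then the canonical decomposition of $T|_M$ is $M = (M \cap \clh_u) \oplus (M \cap \clh_{\neg u})$, with $T|_{M \cap \clh_{\neg u}} \in C_{00}$. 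To prove this, for every $h \in \clh$ write $T^{*n} T^n h = P_{\clh_u} h + T^{*n} T^n P_{\clh_{\neg u}} h$; since $\|T^n P_{\clh_{\neg u}} h\| \to 0$, the right-hand side converges in norm to $P_{\clh_u} h$. For $h \in M$ the vectors $T^{*n} T^n h$ all lie in $M$, so the limit $P_{\clh_u} h$ lies in the closed subspace $M$, giving $P_{\clh_u} M \subseteq M$ and hence the required orthogonal splitting.

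Applying this observation with $T = T_2$ and $M \in \{\clh_{u 1}, \clh_{\neg u 1}\}$, the c.n.u.\ parts of $T_2|_{\clh_{u 1}}$ and $T_2|_{\clh_{\neg u 1}}$ are $\clh_{u 1} \cap \clh_{\neg u 2}$ and $\clh_{\neg u 1} \cap \clh_{\neg u 2}$, both in $C_{00}$. Theorem \ref{Decompositon-lemma1} then ensures that the canonical decompositions of $T_2|_{\clh_{u 1}}$ and $T_2|_{\clh_{\neg u 1}}$ further reduce $T_1|_{\clh_{u 1}}$ and $T_1|_{\clh_{\neg u 1}}$ respectively. Setting
\[
\clh_{uu} = \clh_{u 1} \cap \clh_{u 2}, \quad \clh_{u \neg u} = \clh_{u 1} \cap \clh_{\neg u 2}, \quad \clh_{\neg u u} = \clh_{\neg u 1} \cap \clh_{u 2}, \quad \clh_{\neg u \neg u} = \clh_{\neg u 1} \cap \clh_{\neg u 2},
\]
each piece reduces both $T_1$ and $T_2$, and the unitary/c.n.u.\ behaviour of each operator on each summand follows from its embedding in $\clh_{u i}$ or $\clh_{\neg u i}$. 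The explicit formulas match: the stated $\clh_{uu}$ is $\{h \in \clh_{u 1} : \|T_2^{m_2} h\| = \|h\| = \|T_2^{*m_2} h\|\ \forall m_2\} = \clh_{u 1} \cap \clh_{u 2}$, and self-adjointness of $I - T_2^{*m_2} T_2^{m_2}$ and $I - T_2^{m_2} T_2^{*m_2}$ on $\clh_{u 1}$ makes the stated closed span for $\clh_{u \neg u}$ equal to the orthogonal complement of $\clh_{uu}$ in $\clh_{u 1}$, namely $\clh_{u 1} \cap \clh_{\neg u 2}$; the remaining two formulas are analogous.

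For uniqueness, any decomposition $\clh = \clm_{uu} \oplus \clm_{u \neg u} \oplus \clm_{\neg u u} \oplus \clm_{\neg u \neg u}$ with the stated properties has $\clm_{uu} \oplus \clm_{u \neg u}$ as a $T_1$-reducing subspace on which $T_1$ is unitary, hence contained in $\clh_{u 1}$, and likewise $\clm_{\neg u u} \oplus \clm_{\neg u \neg u} \subseteq \clh_{\neg u 1}$; summing to $\clh$ forces equality. Inside $\clh_{u 1}$, uniqueness of the canonical decomposition of $T_2|_{\clh_{u 1}}$ then identifies $\clm_{uu} = \clh_{uu}$ and $\clm_{u \neg u} = \clh_{u \neg u}$, and similarly inside $\clh_{\neg u 1}$. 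The main obstacle is the restriction lemma above, which is the precise place the $C_{00}$ hypothesis enters in full; the remainder is bookkeeping between the two canonical decompositions.
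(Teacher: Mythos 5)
Your proof is correct and follows the same strategy as the paper: apply Theorem \ref{Decompositon-lemma1} once to split $\clh$ along the canonical decomposition of $T_1$, observe that this decomposition also reduces the twist $U$, and then apply Theorem \ref{Decompositon-lemma1} a second time on each summand with $T_2$ playing the role of the leading contraction. The only point where you diverge from the paper is the justification of the key intermediate fact that the c.n.u.\ parts of $T_2|_{\clh_{u1}}$ and $T_2|_{\clh_{\neg u1}}$ lie in $C_{00}$. The paper reaches this by citing the identity $\clh_{\neg u 2}=\clh_{u\neg u}\oplus\clh_{\neg u\neg u}$ (a routine consequence of the fact that a direct sum of unitaries is unitary, a direct sum of c.n.u.\ contractions is c.n.u., and the canonical decomposition is unique). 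You instead prove a restriction lemma directly: if $T|_{\clh_{\neg u}}\in C_{0.}$ and $M$ reduces $T$, then $T^{*n}T^{n}h\to P_{\clh_u}h$ in norm for every $h$, so $P_{\clh_u}M\subseteq M$ and $M=(M\cap\clh_u)\oplus(M\cap\clh_{\neg u})$. Both routes are valid; yours has the added benefit of identifying the four summands as the double intersections $\clh_{ui}\cap\clh_{uj}$, which makes the matching with the stated kernel/range formulas transparent. One small remark on the uniqueness paragraph: the step ``likewise $\clm_{\neg u u}\oplus\clm_{\neg u\neg u}\subseteq\clh_{\neg u1}$'' is not symmetric to the preceding containment (there is no a priori ``largest reducing subspace on which $T_1$ is c.n.u.'' in general); it is justified here either by your restriction lemma applied to $T_1$, or more simply by citing the uniqueness of the canonical decomposition of $T_1$, which the paper does. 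With that noted, the argument is sound.
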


\begin{proof}
Suppose that $(T_1, T_2)$ is a pair of twisted contractions with respect 
to a twist $U$ on a Hilbert space $\clh$. Then the canonical decomposition 
for $T_1$ gives
\[
\clh=\clh_{u1} \oplus \clh_{\neg u 1} 
\]
where $T_1|_{\clh_{u1}}, T_1|_{\clh_{\neg u 1}}$ are unitary and c.n.u., 
respectively. Also, for the contraction $T_1$ on $\clh$, we have the 
decomposition spaces $\clh_{u1}$ and $\clh_{\neg u 1}$ (see our recent 
work \cite{MM-Wold-type} for more details) as
\begin{align*}
\clh_{u1} 
& = \bigcap_{m_1 \in \mathbb{Z}_{+}}[ \ker (I-T_1^{*m_1}T_1^{m_1}) 
\cap \ker (I-T_1^{m_1}T_1^{*m_1})], \\
\clh_{\neg u1} 
& = \bigvee_ {m_1 \in \mathbb{Z}_{+}} \{ (I-T_1^{*m_1}T_1^{m_1})\clh 
\cup (I-T_1^{m_1} T_1^{*m_1})\clh \}.
\end{align*}
Since by hypothesis $T_1|_{\clh_{\neg u1}} \in C_{00}$, by Theorem 
\ref{Decompositon-lemma1} the subspaces $\clh_{u1}$, $\clh_{\neg u 1}$ 
reduce the contraction $T_2$ as well as the unitary $U$. Thus, 
$U|_{\clh_{u1}}$ and $U|_{\clh_{\neg u1}}$ are unitaries on $\clh_{u1}$ 
and $\clh_{\neg u1}$, respectively. Therefore, $(T_1|_{\clh_{u1}}, 
T_2|_{\clh_{u1}})$ and $(T_1|_{\clh_{\neg u1}}, T_2|_{\clh_{\neg u1}})$ 
are pairs of twisted contractions with respect to the twists $U|_{\clh_{u1}}$ 
and $U|_{\clh_{\neg u1}}$ on $\clh_{u1}$ and $\clh_{\neg u1}$, respectively.
Again the canonical decomposition for the contractions $T_2|_{\clh_{u1}}$ 
and $T_2|_{\clh_{\neg u1}}$ yield 
\[
\clh_{u1}=\clh_{uu} \oplus \clh_{u\neg u}, \ \
\text{and} \ \
\clh_{\neg u 1}=\clh_{\neg u u} \oplus \clh_{\neg u \neg u},
\]
where $T_2|_{\clh_{uu}}$, $T_2|_{\clh_{\neg u u}}$ are unitaries
and $T_2|_{\clh_{u\neg u}}$, $ T_2|_{\clh_{\neg u\neg u}}$ are c.n.u.
By the assumption $T_2|_{\clh_{\neg u2}} \in C_{00}$, that means, 
$T_2|_{\clh_{u\neg u}},T_2|_{\clh_{\neg u\neg u}} \in C_{00}$ as 
$\clh_{\neg u2} = {\clh_{u\neg u}} \oplus \clh_{\neg u\neg u}$. 
Therefore, by Theorem \ref{Decompositon-lemma1}, the subspaces $\clh_{uu}$
and $\clh_{u\neg u}$ reduces the unitary operator $T_1|_{\clh_{u1}}$ and
the subspaces $\clh_{\neg u u}$ and $\clh_{\neg u \neg u}$ reduces the 
completely non-unitary $T_1|_{\clh_{\neg u 1}}$. Now using the decomposition 
formula for the canonical decomposition of the contractions $T_2|_{\clh_{u1}}$ 
and $T_2|_{\clh_{\neg u1}}$, we have
\begin{align*}
\clh_{uu}
& = \bigcap_{ m_2 \in \mathbb{Z}_{+}} [\ker (I_{\clh_{u1}}-T_2^{* m_2}|_{\clh_{u1}}  
T_2^{ m_2}|_{\clh_{u1}})\cap \ker (I_{\clh_{u1}}-T_2^{ m_2}|_{\clh_{u1}} T_2^{* m_2}|_{\clh_{u1}})] \\
& = \bigcap_{m_2 \in \mathbb{Z}_{+}} [\ker ((I -T_2^{* m_2} T_2^{ m_2})|_{\clh_{u1}}) 
\cap \ker (I -T_2^{ m_2} T_2^{* m_2})|_{\clh_{u1}})],\\
\clh_{u\neg u} & = \bigvee_{ m_2 \in \mathbb{Z}_{+}} \{ (I-T_2^{* m_2}T_2^{ m_2}) 
\clh_{u1} \cup (I-T_2^{ m_2}T_2^{* m_2}){\clh_{u1}} \},\\
\clh_{\neg u u} 
& = \bigcap_{ m_2 \in \mathbb{Z}_{+}} [\ker (I_{\clh_{\neg u1}}-T_2^{ *m_2} |_{\clh_{\neg u1 }} 
T_2^{ m_2} |_{\clh_{\neg u1 }}) \cap \ker (I_{\clh_{\neg u 1}}-T_2^{ m_2} |_{\clh_{\neg u1}}
T_2^{* m_2} |_{\clh_{\neg u 1}})] \\
& = \bigcap_{ m_2 \in \mathbb{Z}_{+}} [\ker ( (I-T_2^{*m_2} T_2^{ m_2})|_{\clh_{\neg u 1}})
\cap \ker ((I -T_2^{ m_2} T_2^{* m_2}) |_{\clh_{\neg u 1}})],\\ 
\mbox{and }
\clh_{\neg u\neg u} & = \bigvee_{ m_2 \in \mathbb{Z}_{+}} \{ (I-T_2^{* m_2}T_2^{ m_2}) 
\clh_{\neg u1} \cup (I-T_2^{ m_2}T_2^{* m_2}){\clh_{\neg u1}} \}.
\end{align*}
	
The uniqueness of the decomposition follows from the uniqueness 
of the canonical decomposition of a single contraction. 
This finishes the proof of the theorem.	
\end{proof}

\section{Decomposition for Partial Isometry}

Let $\clh=\underbrace{\clh' \oplus \clh' \oplus \cdots \oplus \clh'}_k$, 
where $\clh'$ is a Hilbert space. A truncated shift $R'$ of index $k$ is 
defined on the $k$-fold direct sum $\clh$ as
\[
R'(h_1,h_2,...,h_k) = (0,h_1,...,h_{k-1}) \quad \mbox{for~} k \in \mathbb{Z}_+.
\]
Therefore, the matrix representation of the truncated shift operator $R'$ 
of index $k$ is of the form 
\[
R'=\begin{bmatrix}
0          &0 &0 &\dots &0 &0 \\
I_{\clh'} &0 &0 &\dots &0 &0 \\
0 &I_{\clh'} &0 &\dots &0 &0\\
\vdots &\vdots &\vdots &\dots &0 &0  \\
0 &0 &0 &\dots &I_{\clh'} &0
\end{bmatrix}_{k \times k}.   
\]
Clearly, it is easy to see that $R' \in C_{00}$ and $\ker R'= \clh' $, 
$R'^*R'=P_{(\ker R')^{\perp}}$. For an example, we consider an operator 
$R $ on $\mathbb{C} \oplus \mathbb{C} \oplus \mathbb{C} \oplus \mathbb{C}$ 
as
\[
	R=\begin{bmatrix}
	0 &0 &1 &0 \\
	a &0 &0 &\sqrt{1-|a|^2}\\
	0 &0 &0  &0 \\
    0 &0 &0  &0
	\end{bmatrix} ,
	\ \ \text{where}\ 0 <|a| <1.
\]
Then one can check that $R$, $R^n(n \geq3)$ are partial isometries but 
$R^2$ is not a partial isometry. Hence $R$ is not a power partial isometry.

Now we recall the decomposition theorem of Halmos and Wallen \cite{HW-POWERS} 
on power partial isometries.

\begin{thm}\label{Power-partial-isometry}
Let $R \in \mathcal{B}(\mathcal{H})$ be a power partial isometry. 
Then there is a unique decomposition  
\[
\displaystyle{ \mathcal{H}=\mathcal{H}_u \oplus \mathcal{H}_s \oplus 
\mathcal{H}_b \oplus (\oplus_{k \geq 1}\mathcal{H}_k}),  
\]
where $\mathcal{H}_u, \mathcal{H}_s, \mathcal{H}_b$ and $\mathcal{H}_k, 
k \geq 1$ are subspaces of $\clh$ reducing $R$ such that
\begin{itemize}
\item $R_u= R|_{\mathcal{H}_u}$ is a unitary operator, 
\item $R_s= R|_{\mathcal{H}_s}$ is a unilateral shift,
\item $R_b= R|_{\mathcal{H}_b}$ is a backward shift, 
\item $R_t= R|_{\mathcal{H}_k}$ is a truncated shift of index $k$. 
\end{itemize} 
Moreover,
\begin{itemize}
\item $\displaystyle{{\mathcal{H}_u} 
=\cap_{n\geq 0} R^{*n}{\mathcal{H}}} \bigcap \cap_{n\geq 0} R^{n}{\mathcal{H}},$ 
\item $\displaystyle{{\mathcal{H}_s}
= \cap_{n\geq 0} R^{*n}{\mathcal{H}} \bigcap \oplus_{n\geq 0} R^{n}(\ker R^*)},$ 
\item  $\displaystyle{{\mathcal{H}_b}
= \cap_{n\geq 0} R^{n}{\mathcal{H}} \bigcap \oplus_{n\geq 0} R^{*n}(\ker R)} ,$ 
\item  $ \clh_t =\oplus_{k \geq 1} \mathcal{H}_k
= \oplus_{n \geq 0} R^{n}(\ker R^*) \bigcap \oplus_{n \geq 0}R^{*n}(\ker R)$.
\end{itemize} 
\end{thm}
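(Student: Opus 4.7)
The plan is to proceed in two stages. First I will apply Theorem 2.1 to the contraction $R$ to split off its unitary summand. Since $R$ is a power partial isometry, $R^{*n}R^n$ is the orthogonal projection onto $(\ker R^n)^\perp = \overline{\mathrm{ran}(R^{*n})}$ and $R^n R^{*n}$ is the projection onto $\overline{\mathrm{ran}(R^n)}$ for every $n \geq 0$. Combining this with the characterization $\mathcal{H}_u = \{h : \|R^n h\| = \|h\| = \|R^{*n} h\| \text{ for all } n\geq 1\}$ from Theorem 2.1 immediately identifies $\mathcal{H}_u$ with $\bigcap_{n \geq 0} R^n \mathcal{H} \,\cap\, \bigcap_{n \geq 0} R^{*n}\mathcal{H}$, which is the first displayed formula, and $R|_{\mathcal{H}_u}$ is unitary.

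Next, the sequences $\{R^{*n}R^n\}_n$ and $\{R^n R^{*n}\}_n$ are decreasing sequences of projections, so they possess strong limits $P^+$ and $P^-$ whose ranges are $\bigcap_n R^{*n}\mathcal{H}$ and $\bigcap_n R^n \mathcal{H}$ respectively. A key algebraic lemma is that the entire family $\{R^{*n}R^n,\, R^m R^{*m}\}_{n,m \geq 0}$ commutes pairwise, which follows from the partial-isometry identities applied uniformly to all powers of $R$. Granting this, the four commuting projections $P^+ P^-$, $P^+(I-P^-)$, $(I-P^+)P^-$, $(I-P^+)(I-P^-)$ split $\mathcal{H}$ into four mutually orthogonal $R$-reducing subspaces, which will be $\mathcal{H}_u$, $\mathcal{H}_s$, $\mathcal{H}_b$, and $\mathcal{H}_t$. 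On $\mathcal{H}_s = P^+(I-P^-)\mathcal{H}$, the operator $R$ acts isometrically (from $P^+$) while $\bigcap_n R^n \mathcal{H}_s = \{0\}$, so Theorem 2.2 forces $R|_{\mathcal{H}_s}$ to be a pure shift; the dual argument for $R^*$ on $(I-P^+)P^-\mathcal{H}$ yields the backward shift on $\mathcal{H}_b$. The explicit formulas in the theorem then follow from the identifications
\[
\Big(\bigcap_n R^n \mathcal{H}\Big)^{\!\perp} = \bigoplus_n R^n(\ker R^*), \qquad \Big(\bigcap_n R^{*n}\mathcal{H}\Big)^{\!\perp} = \bigoplus_n R^{*n}(\ker R),
\]
each of which uses the power-partial-isometry property to guarantee orthogonality of the summands.

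On the residual piece $\mathcal{H}_t$, $R$ is a c.n.u. power partial isometry whose forward and backward orbits both terminate. For each $k \geq 1$ I set
\[
\mathcal{L}_k = \big(\ker R^* \cap \ker R^k\big) \ominus \big(\ker R^* \cap \ker R^{k-1}\big),
\]
the part of $\ker R^*$ whose $R$-orbit under $R$ has length exactly $k$, and define $\mathcal{H}_k = \bigoplus_{j=0}^{k-1} R^j \mathcal{L}_k$. The power-partial-isometry identities force the summands $R^j \mathcal{L}_k$ to be mutually orthogonal and make $R|_{\mathcal{H}_k}$ unitarily equivalent to the truncated shift of index $k$; summing over $k$ exhausts $\mathcal{H}_t$, giving the last displayed formula. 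Uniqueness of the entire decomposition follows from the intrinsic characterization of each summand together with uniqueness in the canonical decomposition.

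The main obstacle I anticipate is the commutation lemma for the full family $\{R^{*n}R^n, R^m R^{*m}\}_{n,m \geq 0}$: once this is in hand the four-term spectral-style decomposition is automatic. The secondary technical point is the careful bookkeeping needed to show that $\bigoplus_{k \geq 1} \mathcal{H}_k$ exactly fills the residual summand $\mathcal{H}_t$ rather than missing a cross-term. Both hinge on applying the partial-isometry identities uniformly across all exponents; the remainder is a matter of organizing these pieces and invoking the canonical decomposition together with the Wold--von Neumann decomposition already established in the paper.
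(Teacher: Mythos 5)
The paper does not prove this theorem; it is stated verbatim as a recollection of the classical decomposition theorem of Halmos and Wallen and cited to \cite{HW-POWERS}, so there is no internal argument to compare your proposal against. Your sketch does follow the standard Halmos--Wallen route: form the decreasing limits $P^{+}=\lim_n R^{*n}R^n$ and $P^{-}=\lim_n R^nR^{*n}$, exploit commutativity of the full family of initial and final projections of the powers of $R$ to produce four mutually orthogonal reducing pieces via $P^{+}P^{-}$, $P^{+}(I-P^{-})$, $(I-P^{+})P^{-}$, $(I-P^{+})(I-P^{-})$, and then identify each piece using the Wold decomposition and the length stratification of $\ker R^{*}$. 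The identifications you give (the wandering strata $\mathcal{L}_k=(\ker R^{*}\cap\ker R^k)\ominus(\ker R^{*}\cap\ker R^{k-1})$, and the formulas for $\mathcal{H}_u,\mathcal{H}_s,\mathcal{H}_b,\mathcal{H}_t$) are the right ones.

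The load-bearing step, however, is left as an assertion: the pairwise commutation of $\{R^{*n}R^n,\,R^mR^{*m}\}_{n,m\ge 0}$. You write that this ``follows from the partial-isometry identities applied uniformly to all powers of $R$,'' but it does not follow uniformly -- it is exactly the nontrivial algebraic lemma that Halmos and Wallen isolate and prove separately, and it requires its own induction starting from the identities $R^{k}R^{*k}R^{k}=R^{k}$. Without that lemma in hand, several downstream claims in your sketch do not yet go through: that the four products of $P^{\pm}$ are projections at all, that the corresponding subspaces reduce $R$ (you only verify $R$-invariance of $\mathrm{ran}\,P^{-}$, not $R^{*}$-invariance, and the latter genuinely needs the commutation), and that the summands $R^{j}\mathcal{L}_k$ in $\mathcal{H}_t$ are mutually orthogonal. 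So the architecture of the argument is correct and agrees with the classical proof, but as written the proposal is incomplete precisely at the step you yourself flag as the main obstacle; that step needs to be supplied, not cited as automatic.
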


Following Halmos and Wallen \cite{HW-POWERS}, one can conclude that 
for a power partial isometry $R$, $R_u= R|_{ \mathcal{H}_u} \in 
C_{11}$, $R_s= R|_{ \mathcal{H}_s} 
\in  C_{10}$, $R_b = R|_{ \mathcal{H}_b}  \in C_{01}$, and $R_k= 
R|_{ \mathcal{H}_k} \in  C_{00}$. Thus any power partial isometry
$R$ on $\mathcal{H}=\mathcal{H}_u \oplus \mathcal{H}_b \oplus \mathcal{H}_s 
\oplus \mathcal{H}_t $ is of following type: 
\[
\bordermatrix{&\clh_u &\clh_b &\clh_s &\clh_t \cr
	& C_{11} &O &O &O \cr
	&O & C_{01} &O &O \cr
	&O &O  & C_{10} &O\cr
    &O &O &O  & C_{00} \cr}.
\]

Firstly, we find matricial representation for a doubly commutant of 
a power partial isometry. 
\begin{prop}\label{DC-power partial isometry}
Let $(R,B)$ be a pair of doubly commuting operators on $\mathcal{H}$ 
such that $R$ is a power partial isometry. If 
$\mathcal{H}= \mathcal{H}_{u} \oplus \mathcal{H}_b \oplus 
\mathcal{H}_s \oplus \mathcal{H}_t$ 
is the decomposition for $R$, then this decomposition reduces also 
the operator $B$.
\end{prop}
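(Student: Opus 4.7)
The plan is to combine the doubly commuting hypothesis with the explicit descriptions of $\mathcal{H}_u, \mathcal{H}_s, \mathcal{H}_b, \mathcal{H}_t$ given in Theorem~\ref{Power-partial-isometry}. Since $(R,B)$ is doubly commuting, $RB=BR$ and $R^*B=BR^*$. Taking adjoints yields $B^*R=RB^*$ and $B^*R^*=R^*B^*$, so both $B$ and $B^*$ commute with every monomial in $R$ and $R^*$; in particular, for every $n\geq 0$ they commute with $R^n$ and with $R^{*n}$.

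Next I would establish two basic invariance facts. First, since $B$ commutes with $R^n$, we have $B(R^n\mathcal{H})\subseteq R^n\mathcal{H}$; likewise $B^*(R^n\mathcal{H})\subseteq R^n\mathcal{H}$, and the same for $R^{*n}\mathcal{H}$. Therefore each of the subspaces $\bigcap_{n\geq 0}R^n\mathcal{H}$ and $\bigcap_{n\geq 0}R^{*n}\mathcal{H}$ is $B$- and $B^*$-invariant. Second, if $h\in\ker R$ then $RBh=BRh=0$ and $RB^*h=B^*Rh=0$, so $\ker R$ is both $B$- and $B^*$-invariant, and by the symmetric argument so is $\ker R^*$.

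With these in hand, I can treat each summand of Halmos--Wallen's decomposition using the explicit formulas in Theorem~\ref{Power-partial-isometry}. The subspace $\mathcal{H}_u$ is an intersection of two subspaces already shown to be $B$- and $B^*$-invariant, so it reduces $B$. For $\mathcal{H}_s$, invariance of $\ker R^*$ gives $B(R^n(\ker R^*))=R^n B(\ker R^*)\subseteq R^n(\ker R^*)$, and similarly for $B^*$. Since $B$ is bounded, the closed orthogonal sum $\bigoplus_{n\geq 0}R^n(\ker R^*)$ inherits $B$- and $B^*$-invariance; intersecting with the already-invariant $\bigcap_{n\geq 0}R^{*n}\mathcal{H}$ shows $\mathcal{H}_s$ reduces $B$. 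The argument for $\mathcal{H}_b$ is symmetric, swapping the roles of $R$ and $R^*$. Finally, $\mathcal{H}_t=\bigoplus_{n\geq 0}R^n(\ker R^*)\cap \bigoplus_{n\geq 0}R^{*n}(\ker R)$ is again an intersection of subspaces already shown to be reducing for $B$, so it reduces $B$ as well.

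The argument is essentially a direct manipulation once the commutation $B,B^*\in\{R,R^*\}'$ is recorded, so no serious obstacle arises. The only point that requires care is the passage from the algebraic invariance of each $R^n(\ker R^*)$ (respectively $R^{*n}(\ker R)$) to the invariance of the closed orthogonal sum; this is handled by continuity of $B$ and $B^*$. Note that the proposition asserts only that $B$ is reduced by the four-term decomposition $\mathcal{H}_u\oplus\mathcal{H}_b\oplus\mathcal{H}_s\oplus\mathcal{H}_t$ and not by the finer decomposition $\mathcal{H}_t=\bigoplus_{k\geq 1}\mathcal{H}_k$, which is why the above argument is enough.
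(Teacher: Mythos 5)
Your proof is correct, but it takes a different route from the paper's. The paper first splits $\mathcal{H}$ into $\mathcal{K}_1=\mathcal{H}_u\oplus\mathcal{H}_b$ and $\mathcal{K}_2=\mathcal{H}_s\oplus\mathcal{H}_t$, establishes invariance of $\mathcal{K}_1$ under $B$, and then works inside each $\mathcal{K}_i$ with a $2\times 2$ block representation of $B$, killing off the off-diagonal entries one at a time via the $C_{\alpha\beta}$ classification of the restrictions $R_u\in C_{11}$, $R_b\in C_{01}$, $R_s\in C_{10}$, $R_t\in C_{00}$ and appeals to Corollary~\ref{cor2} and to the $C_{00}$ argument of Theorem~\ref{Decompositon-lemma1}; the doubly commuting hypothesis is invoked at the very end to pass from a block-lower-triangular form to a block-diagonal one. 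Your proof bypasses the matrix bookkeeping entirely: it works directly from the explicit Halmos--Wallen formulas for the four summands, observing that $B$ and $B^*$ commute with every $R^n$ and $R^{*n}$, hence preserve $\bigcap_n R^n\mathcal{H}$, $\bigcap_n R^{*n}\mathcal{H}$, $\ker R$, $\ker R^*$, and therefore (by boundedness) the closed orthogonal sums $\bigoplus_n R^n(\ker R^*)$ and $\bigoplus_n R^{*n}(\ker R)$, from which the result falls out by intersection. Your route is shorter and more elementary, and it does not lean on the $C_{\alpha\beta}$ machinery; the paper's route, on the other hand, is the one that generalizes to the doubly twisted case (Theorem~4.3), where the twist $U$ makes the direct set-theoretic argument slightly more delicate. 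Interestingly, the paper's remark following Theorem~4.3 explicitly acknowledges that one can prove these results via the explicit decomposition spaces, which is precisely what you have done.
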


\begin{proof}
Suppose that $(R,B)$ is a pair of operators such that $R$ 
is a power partial isometry and 
$\mathcal{H}= \mathcal{H}_{u} \oplus \mathcal{H}_b \oplus 
\mathcal{H}_s \oplus \mathcal{H}_t$ is the decomposition for $R$. 

Let $\clk_1=\clh_u \oplus \clh_b$ and $\clk_2=\clh_s \oplus \clh_t$. 
If $RB=BR$, then one can prove (similar to proof of the Theorem 
\ref{Decompositon-lemma1}), $\clk_1$ is an invariant subspace for $B$. 
So the matrix representation of $B$ with respect to the decomposition 
$\clh=\clk_1 \oplus \clk_2$ is of the form 
\[
	 	 B = \begin{bmatrix} 
	 	 B_1 & *\\ 
	 	 O & B_2
	 	\end{bmatrix},
	   \	\text{where} \ \ B_1=B|_{\clk_1},
	 	\	\text{and} \ B_2= P_{\clk_2}B|_{\clk_2}.
\]
Now $B_1$ is a bounded operator on $\clk_1=\clh_u \oplus \clh_b$
and let $B_1= 
\begin{bmatrix} 
B_{11} & B_{21}\\ 
B_{31} & B_{41} 
\end{bmatrix}$
on $\clk_1$. Then
\[
R|_{\clk_1} B|_{\clk_1} = B|_{\clk_1} R|_{\clk_1} 
\]
yields
\[
\begin{bmatrix} 
R_u B_{11} & R_u B_{21}\\ 
R_b B_{31} & R_b B_{41} 
\end{bmatrix}
= 
\begin{bmatrix} 
B_{11} R_u & B_{21} R_b\\ 
B_{31} R_u & B_{41} R_b 
\end{bmatrix}.
\] 
Therefore, $ R_u B_{21}= B_{21} R_b$. Since $R_u$ is unitary 
and $R_b$ is in $C_{0.}$, by Corollary \ref{cor2} we have $B_{21} =0$. 
Therefore, the matrix form of $B_1$ on $\clk_1=\clh_u \oplus \clh_b$ 
is of the form 
\[
\begin{bmatrix} 
B_{11} & O\\ 
B_{31} & B_{41} 
\end{bmatrix}.
\] 
With the similar arguments, we can find the 
matrix form of $B_2$ on $\clk_2=\clh_s \oplus \clh_t$. Let
\[  
B_2= \begin{bmatrix} 
B_{12} & B_{22}\\
B_{32} & B_{42} 
\end{bmatrix}
\]
on $\clk_2=\clh_s \oplus \clh_t$. Since $R_s$ is an isometry and
$R_t$ is in $C_{00}$, we have $B_2$ is of the form 
\[
\begin{bmatrix} 
B_{33} & O\\
B_{43} & B_{44} 
\end{bmatrix}.
\] 
Hence the matrix form of the operator $B$, commutant of a power partial 
isometry on $\mathcal{H}=\mathcal{H}_u \oplus \mathcal{H}_b \oplus 
\mathcal{H}_s \oplus \mathcal{H}_t $ is of the form 
\[
B=
\bordermatrix{&\clh_u &\clh_b &\clh_s &\clh_t \cr
	&B_{11} &O &* &* \cr
	&* &B_{22}  &* &* \cr
	&O &O  &{B_{33}} &O\cr
	&O &O &*  &{B_{44}} \cr}.
\]
Now if $(R, B)$ is doubly commuting, i.e, $RB=BR$ and $R^*B=BR^*$, 
then $B$ is of the form
\[
\bordermatrix{&\clh_u &\clh_b &\clh_s &\clh_t \cr
	&B_{11} &O &O &O\cr
	&O &B_{22}  &O &O \cr
	&O &O  &{B_{33}} &O\cr
	&O &O &O  &{B_{44}} \cr}.
\]
This shows that the decomposition for a power partial isometry
reduces $B$.
\end{proof}

Now we are in a position to state the main result of this section.

\begin{thm}
Let $(R,Q)$ be a pair of doubly twisted operators on $\mathcal{H}$ 
such that $R$ is a power partial isometry. If 
$\mathcal{H}= \mathcal{H}_{u} \oplus \mathcal{H}_b \oplus \mathcal{H}_s 
\oplus \mathcal{H}_t$ is the decomposition for $R$, then this 
decomposition reduces also the operator $Q$.
\end{thm}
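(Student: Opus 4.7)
The plan mirrors that of Proposition \ref{DC-power partial isometry}, with a preliminary reduction to absorb the twist. First I would isolate $\mathcal{H}_u$ from the rest: since $R$ is a contraction and $(R,Q)$ is doubly twisted, Theorem \ref{main thm-reduce} gives that the canonical decomposition of $R$ reduces $Q$, and its unitary part coincides with the Halmos--Wallen subspace $\mathcal{H}_u$. Hence $\mathcal{H}_u$ and $\mathcal{H}_b \oplus \mathcal{H}_s \oplus \mathcal{H}_t$ both reduce $Q$, and it remains to split the latter three-fold sum compatibly with $Q$.

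Next, because the twist $U$ is unitary and commutes with $R$, the pair $(R,U)$ is doubly commuting; applying Proposition \ref{DC-power partial isometry} to this pair shows that the whole Halmos--Wallen decomposition reduces $U$, producing $U = U_u \oplus U_b \oplus U_s \oplus U_t$ with each $U_i$ unitary on $\mathcal{H}_i$. After restricting the doubly twisted pair to $\mathcal{H}_b \oplus \mathcal{H}_s \oplus \mathcal{H}_t$ and writing $Q$ as a $3 \times 3$ block matrix $(Q_{ij})_{i,j \in \{b,s,t\}}$, the relations $RQ = UQR$ and $R^*Q = U^*QR^*$ iterate entry-wise to
\[
R_i^n Q_{ij} = U_i^n Q_{ij} R_j^n \qquad \text{and} \qquad R_i^{*n} Q_{ij} = U_i^{*n} Q_{ij} R_j^{*n}
\]
for all $n \geq 0$. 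Unitarity of $U_i$ means $\|U_i^n x\| = \|x\|$, so the twist disappears from every relevant norm estimate.

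The off-diagonal blocks would then be killed one at a time using the Halmos--Wallen class identifications $R_b \in C_{01}$, $R_s \in C_{10}$, $R_t \in C_{00}$. For $(b,s)$, combine $\|R_b^{*n} Q_{bs} h\| = \|Q_{bs} R_s^{*n} h\| \to 0$ with the fact that $R_b^*$ is an isometry (hence bounded below) to conclude $Q_{bs} = 0$; $(s,b)$ uses the isometry of $R_s$ against $R_b^n \to 0$; and $(b,t)$, $(s,t)$ follow analogously via $R_t^{*n} \to 0$ or $R_t^n \to 0$. For the stubborn cases $(t,b)$ and $(t,s)$, where both $R_t^n$ and $R_t^{*n}$ vanish strongly and a direct attack gives no traction, I would pass to the adjoint pair $(R,Q^*)$ --- a quick computation from the defining relations shows it is doubly twisted with twist $U^*$ --- and recover $Q_{tb}^* = (Q^*)_{bt} = 0$ and $Q_{ts}^* = (Q^*)_{st} = 0$ from the already handled $(b,t)$ and $(s,t)$ cases.

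The main obstacle is purely organizational: six off-diagonal blocks must be knocked out. The twist itself contributes no analytic difficulty, because its unitarity makes the $U_i^n$ factors drop out of every norm; in that sense the argument is a faithful twisted analogue of Proposition \ref{DC-power partial isometry}, with no new ingredient beyond Theorem \ref{main thm-reduce}, the Halmos--Wallen classification, and the adjoint-pair symmetry.
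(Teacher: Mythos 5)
Your proposal is correct. It differs from the paper's organization, but both arrive at the result by the same underlying mechanism.

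The paper does not peel off $\mathcal{H}_u$ first via Theorem \ref{main thm-reduce}. Instead, it groups $\mathcal{K}_1 = \mathcal{H}_u \oplus \mathcal{H}_b$ (where $R^*|_{\mathcal{K}_1}$ is an isometry, i.e.\ $R|_{\mathcal{K}_1}\in C_{.1}$) against $\mathcal{K}_2 = \mathcal{H}_s \oplus \mathcal{H}_t$ (where $R|_{\mathcal{K}_2}\in C_{.0}$), and kills the entire lower-left $2\times 2$ block of $Q$ in one stroke by iterating $R_2 Q_{21}=U_2 Q_{21} R_1$, taking adjoints, and using both $R_1^*$ isometric and $R_2^{*n}\to 0$. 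It then eliminates the $(u,b)$ block inside $\mathcal{K}_1$ (Corollary \ref{cor2}: unitary vs.\ $C_{0.}$) and the $(s,t)$ block inside $\mathcal{K}_2$ (isometry vs.\ $C_{00}$), which yields the ``twisted commutant'' triangular form, and only then brings in the second twisted relation to wipe out the remaining upper blocks. Your route is more piecemeal: reduce $\mathcal{H}_u$ via Theorem \ref{main thm-reduce}, split $U$ diagonally via Proposition \ref{DC-power partial isometry} applied to $(R,U)$, attack the six off-diagonal blocks among $b,s,t$ one at a time with the iterated relations (unitarity of $U_i$ removing the twist from every norm), and pass to the adjoint pair $(R,Q^*)$ with twist $U^*$ for the two blocks $Q_{tb}$, $Q_{ts}$ where the direct estimate gives nothing because both $R_t^n$ and $R_t^{*n}$ vanish. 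You correctly identify that these two blocks need the adjoint symmetry; that identification is the only point that requires care, and you handle it. In short, your argument is a faithful twisted analogue of Proposition \ref{DC-power partial isometry} with the unitary part factored out in advance, while the paper's argument factors the $C_{.1}$/$C_{.0}$ split first; the ingredients — the Halmos--Wallen class identifications, the iterated block relations, unitarity of the twist, and the doubly-twisted (adjoint) symmetry — are identical.
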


\begin{proof}
Suppose $(R,Q)$ is a pair of doubly twisted operators with respect 
to a twist $U$ on $\mathcal{H}$. Let $R$ be a power partial isometry 
and $\mathcal{H}= \mathcal{H}_{u} \oplus \mathcal{H}_b \oplus 
\mathcal{H}_s \oplus \mathcal{H}_t$ be the decomposition for $R$.

Consider $\clk_1=\clh_u \oplus \clh_b$ and $\clk_2=\clh_s \oplus \clh_t$. 
Since $RQ=UQR$, from the proof of Theorem \ref{Decompositon-lemma1}, 
$\clk_1$ is an invariant subspace for $Q$. So the matrix representation 
of $Q$ with respect to the decomposition $\clh=\clk_1 \oplus \clk_2$ is 
of the form 
\[
Q = \begin{bmatrix} 
		Q_1 & *\\ 
		O & Q_2
	\end{bmatrix},
	\	\text{where} \ \ Q_1=Q|_{\clk_1},
	\	\text{and} \ Q_2= P_{\clk_2}Q|_{\clk_2}.
\]
Now the pair $(R, U)$ is doubly commuting on $\clh$, and hence from 
Proposition \ref{DC-power partial isometry}, the structure of the 
unitary $U$ is of the form	 
\[
	\bordermatrix{&\clh_u &\clh_b &\clh_s &\clh_t \cr
		&U_{11} &O &O &O\cr
		&O &U_{22}  &O &O \cr
		&O &O  &{U_{33}} &O\cr
		&O &O &O  &{U_{44}} \cr},
	\quad \mbox{or} \quad
	\bordermatrix
	{&\clk_1 &\clk_2 \cr
	&U_1 &O\cr
	&O   &U_2},
\]
where
$U_1=
\begin{bmatrix} 
		U_{11} & O\\ 
		O & U_{22} 
\end{bmatrix}$
on $\clk_1$ and 
$U_2
=\begin{bmatrix} 
		U_{33} & O\\ 
		O & U_{44} 
	\end{bmatrix}$
on $\clk_2$. 	
Then
\[
R|_{\clk_i} Q|_{\clk_i} = U_iQ|_{\clk_i} R|_{\clk_i} \quad (i=1, 2). 
\]
Let 
$Q_i= 
	\begin{bmatrix} 
		Q_{1i} & Q_{2i}\\ 
		Q_{3i} & Q_{4i} 
	\end{bmatrix}$ 
on $\clk_i$ for $i=1, 2$. 
Now
\[
R|_{\clk_1} Q|_{\clk_1} = U_1Q|_{\clk_1} R|_{\clk_1}  
\] 
on $\clk_1=\clh_u \oplus \clh_b$ yields
\[
	\begin{bmatrix} 
		R_u Q_{11} & R_u Q_{21}\\ 
		R_b Q_{31} & R_b Q_{41} 
	\end{bmatrix}
	= 
	\begin{bmatrix} 
		U_{11}Q_{11} R_u & U_{11}Q_{21} R_b\\ 
		U_{22}Q_{31} R_u & U_{22}Q_{41} R_b 
	\end{bmatrix}.
\] 
Therefore, $ R_u Q_{21}= U_{11}Q_{21} R_b$. Consequently, 
$(R|_{\clk_1},Q_1)$ is a pair of twisted contractions with 
respect the twist $U_1$ on $\clk_1$. Since $R_u$ is unitary 
and $R_b$ is in $C_{0.}$, by Corollary \ref{cor2} we have 
$Q_{21} =0$. Therefore, the matrix form of $Q_1$ on $\clk_1
=\clh_u \oplus \clh_b$ is of the form 
\[
	\begin{bmatrix} 
		Q_{11} & O\\ 
		Q_{31} & Q_{41} 
	\end{bmatrix}.
\] 
Again $R_s$ is an isometry and $R_t$ is in $C_{00}$ Theorem 
\ref{Decompositon-lemma1}, we have $Q_2$ on $\clk_2=\clh_s \oplus 
\clh_t$ is of the form 
\[
	\begin{bmatrix} 
		Q_{33} & O\\
		Q_{43} & Q_{44} 
	\end{bmatrix}.
\] 
Hence the matrix form  of the twisted commutant $Q$ with
a power partial isometry on 
$\mathcal{H}=\mathcal{H}_u \oplus \mathcal{H}_b \oplus \mathcal{H}_s 
\oplus \mathcal{H}_t $ is of the form 
\[
	Q=
	\bordermatrix{&\clh_u &\clh_b &\clh_s &\clh_t \cr
		&Q_{11} &O &* &* \cr
		&* &Q_{22}  &* &* \cr
		&O &O  &{Q_{33}} &O\cr
		&O &O &*  &{Q_{44}} \cr}.
\]
Since $(R, Q)$ is a doubly twisted pair on $\clh$. Hence  $Q$ is of the form
\[
	\bordermatrix{&\clh_u &\clh_b &\clh_s &\clh_t \cr
		&Q_{11} &O &O &O\cr
		&O &Q_{22}  &O &O \cr
		&O &O  &{Q_{33}} &O\cr
		&O &O &O  &{Q_{44}} \cr}.
\]
This shows that the decomposition for a power partial isometry reduces $Q$.
\end{proof}

\begin{rem}
In the above proof we use the structure theory of a
power partial isometry and hence the proof does not require 
the explicit orthogonal decomposition spaces for power partial isometry. 
However, one can easily prove the above result with the help of the explicit 
decomposition spaces.
\end{rem}

\section{Dilation and wold type decomposition}

In general, the structure of a contraction on a Hilbert space 
is very difficult to study. Using dialtion theory, we show here that
certain pairs of contractions are doubly twisted on the minimal isometric 
dilation space.

\begin{defn}
Let $\mathcal{H} \subset \mathcal{K}$ be two Hilbert spaces. 
Suppose $T \in \mathcal{B}(\mathcal{H})$  and $S \in \mathcal{B}
(\mathcal{K})$ are two bounded operators.
Then $S$ is called a dilation of $T$ if
\[ 
T^n h =P_{\mathcal{H}}S^n h  
\] 
for all $ h \in \mathcal{H}$ and $n \in \mathbb{Z}_+$, 
where $P_{\mathcal{H}}$ is the orthogonal projection of 
$\mathcal{K}$ onto $\mathcal{H}$. A dilation $S$ of $T$ is 
called minimal if 
\[
\overline{span} \left\lbrace S^nh:h \in \mathcal{H},
	 n \in \mathbb{Z}_+   \right\rbrace  =\mathcal{K}. 
\]
An isometric dilation of $T$ is a dilation $S$ which is an isometry. 
\end{defn}

Now we recall one of the striking results on dilation theory (cf. \cite{NF-BOOK}, 
\cite{GPSS-Dilations}).

\begin{thm}
For every contraction $T$ on a Hilbert space $\mathcal{H}$
there exists an isometric dilation $S$ on some Hilbert space 
$\mathcal{K}\left( \supset \mathcal{H} \right) $, which is 
moreover minimal in the sense that 
\[
	  \mathcal{K}= \bigvee_{n=0}^{\infty} S^n \mathcal{H},
	 \ \text{i.e.,} \ 
	  \mathcal{K}= \overline{span} \left\lbrace S^nh:h \in \mathcal{H},
	  n \in \mathbb{Z}_+ \right\rbrace.
\] 
This minimal isometric dilation $ \left( S, \clk \right)$ of
$\left( T, \clh \right)$ is determined upto isomorphism. 
The space $\mathcal{H}$ is invariant for $S^*$ and we have  
\[
 TP_{\mathcal{H}}=P_{\mathcal{H}}S \ \ \text{and} \ \ T^*= S^*|_{\mathcal{H}}, 
\] 
where $P_{\mathcal{H}}$ denotes the orthogonal projection from $\mathcal{K}$ 
onto $\mathcal{H}$.
\end{thm}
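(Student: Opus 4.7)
The plan is to give the classical Schäffer-type construction for the minimal isometric dilation. First I would introduce the defect operator $D_T = (I_{\clh} - T^*T)^{1/2}$ and the defect space $\cld_T = \overline{D_T \clh}$. I would then build the dilation space by attaching countably many copies of $\cld_T$ to $\clh$:
\[
\clk = \clh \oplus \cld_T \oplus \cld_T \oplus \cld_T \oplus \cdots,
\]
and define the operator $S$ on $\clk$ by
\[
S(h, d_0, d_1, d_2, \ldots) = (Th,\, D_T h,\, d_0,\, d_1,\, \ldots).
\]
The motivation is that $S$ augments the action of $T$ by recording the "defect" of $T$ at each step into a shift register, so that the extra coordinates absorb the loss of norm.

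The first main step is to verify that $S$ is an isometry, which reduces to the identity $\|Th\|^2 + \|D_T h\|^2 = \|h\|^2$ coming from $T^*T + D_T^2 = I_{\clh}$, together with the fact that the shift part of $S$ on $\mo_{n \geq 0} \cld_T$ is an isometry in an obvious way. The second step is the dilation property: I would check by induction on $n$ that, for any $h \in \clh$,
\[
S^n h = (T^n h,\, D_T T^{n-1} h,\, D_T T^{n-2}h,\, \ldots,\, D_T h,\, 0,\, 0,\, \ldots),
\]
from which $P_{\clh} S^n h = T^n h$ is immediate. For minimality I would pass, if necessary, to the reducing subspace $\clk_0 = \bigvee_{n \geq 0} S^n \clh$; by construction this contains all vectors of the form $(0,\ldots,0, D_T h, 0, \ldots)$ so it is already all of $\clk$ in the Schäffer model.

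For the remaining identities, I would use that $\clh \subseteq \clk$ is coinvariant, i.e.\ $S^* \clh \subseteq \clh$: this follows directly from the block matrix form of $S$ (the first column is $\begin{pmatrix} T \\ D_T \\ 0 \\ \vdots \end{pmatrix}$), so $S^*|_{\clh} = T^*$, and consequently $T P_{\clh} = P_{\clh} S$ by taking adjoints of $T^* = P_{\clh} S^* |_{\clh}$ combined with coinvariance. Finally, uniqueness up to unitary equivalence is the standard argument: if $(S', \clk')$ is another minimal isometric dilation, I would define a map $W : \clk \to \clk'$ on the dense set $\{S^n h : h \in \clh,\, n \geq 0\}$ by $W S^n h = S'^n h$, and use the dilation identity together with the fact that both $S$ and $S'$ are isometries to check $W$ is well-defined and norm-preserving, hence extends to a unitary intertwining the two dilations. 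The main subtlety to watch is showing $W$ is well-defined on finite sums $\sum S^{n_i} h_i$, which boils down to computing inner products $\la S^n h, S^m k \ra$ purely in terms of $T$ via the dilation relation $\la S^{n-m} h, k \ra = \la T^{n-m} h, k \ra$ when $n \geq m$, a calculation that is symmetric in the two models.
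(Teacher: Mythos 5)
Your proof is correct; it is the classical Schäffer-matrix construction of the minimal isometric dilation, and all the steps (isometry of $S$ from $T^*T + D_T^2 = I$, the inductive formula for $S^n h$, minimality via peeling off coordinates, coinvariance $S^*|_{\mathcal{H}} = T^*$ from the first column of the block matrix, and the inner-product computation $\langle S^n h, S^m k\rangle = \langle T^{n-m}h, k\rangle$ establishing uniqueness up to a unitary intertwiner) go through as you describe. Note, however, that the paper does not prove this theorem at all: it merely recalls it as a standard result with citations to Sz.-Nagy--Foia\c{s} and to Gerhold--Pandey--Shalit--Solel, so there is no in-paper argument to compare against; your write-up supplies the expected textbook proof.
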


One of the main results of this section is as follows:

\begin{thm}\label{Main result}
Let $\left( T, V \right)$ be a pair of operators on a Hilbert space 
$\mathcal{H}$ such that $T$ is a contraction and $V$ is an isometry 
and also $T^*V=UVT^*$, where $U$ is unitary and $T,V \in \{U\}'$. 
Let $S$ on $\mathcal{K}$ be the minimal isometric dilation for $T$. 
If $\widetilde{V}$ on $\mathcal{K}$ is an isometric extension of $V$, 
then $( S,\widetilde{V} )$ is a pair of doubly twisted isometries 
on $\clk$ and hence there is a Wold-type decomposition for the pair 
$(S, \widetilde{V} )$ on the minimal space $\clk$. 
Moreover, the pair $(S, \widetilde{V} ) $ on the minimal space $\clk$ 
is unique up to unitary equivalence.
\end{thm}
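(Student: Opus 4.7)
The plan is to exhibit $(S,\widetilde{V})$ explicitly on Sch{\"a}ffer's realization of the minimal isometric dilation, verify the doubly twisted identities by direct computation, and invoke Theorem~\ref{Slocinski result-twist} together with the universal property of the minimal dilation.

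\emph{Step 1 (Sch{\"a}ffer model and lifted twist).} Realize $(S,\mathcal{K})$ by setting $\mathcal{D}_T:=\overline{\operatorname{ran}}\,D_T$, $\mathcal{K}=\mathcal{H}\oplus\ell^2(\mathbb{Z}_+,\mathcal{D}_T)$ and
\[
S(h,d_0,d_1,\ldots)=(Th,D_Th,d_0,d_1,\ldots),
\]
so that $\mathcal{H}$ is coinvariant for $S$, $S^*|_{\mathcal{H}}=T^*$, and $\mathcal{K}=\bigvee_{n\ge 0}S^n\mathcal{H}$. Since $T,V\in\{U\}'$ and $U$ is unitary, $U$ also commutes with $T^*$ and $V^*$, hence with $D_T^2=I-T^*T$, and therefore with $D_T$; in particular $U$ leaves $\mathcal{D}_T$ invariant as a unitary. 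The natural lift $\widetilde{U}(h,d_0,d_1,\ldots):=(Uh,Ud_0,Ud_1,\ldots)$ is then a unitary on $\mathcal{K}$ which extends $U$ and commutes with $S$ componentwise.

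\emph{Step 2 (Construction of $\widetilde{V}$).} Iterating the twisted hypothesis $T^*V=UVT^*$ yields $(T^*)^nV=U^nV(T^*)^n$, equivalently $V(T^*)^n=(U^*)^n(T^*)^nV$, for every $n\ge 0$. Combining these intertwinings with the Sch{\"a}ffer formula
\[
S^nh=(T^nh,D_TT^{n-1}h,\ldots,D_Th,0,0,\ldots)\quad(h\in\mathcal{H}),
\]
define $\widetilde{V}$ on each cyclic orbit $S^n\mathcal{H}$ so that the doubly twisted identities $S\widetilde{V}=\widetilde{U}\widetilde{V}S$ and $S^*\widetilde{V}=\widetilde{U}^*\widetilde{V}S^*$ are forced to hold on that layer, and $\widetilde{V}|_{\mathcal{H}}=V$. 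Check consistency on overlaps of layers and extend by density through $\mathcal{K}=\bigvee_{n\ge 0}S^n\mathcal{H}$ to an isometry $\widetilde{V}$ on $\mathcal{K}$. The commutation $\widetilde{V}\widetilde{U}=\widetilde{U}\widetilde{V}$ is then inherited componentwise from $UV=VU$.

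\emph{Step 3 (Wold decomposition and uniqueness).} Since $(S,\widetilde{V})$ is a pair of doubly twisted isometries on $\mathcal{K}$, Theorem~\ref{Slocinski result-twist} applies directly and yields the Wold-type decomposition of $\mathcal{K}$. For uniqueness, recall that the minimal isometric dilation of $T$ is unique up to unitary equivalence fixing $\mathcal{H}$, and the prescription for $\widetilde{V}$ on each $S^n\mathcal{H}$ depends only on $V$, $T^*$ and $U$; hence any unitary intertwiner of two such realizations automatically intertwines the corresponding extensions of $V$, giving uniqueness of $(S,\widetilde{V})$ up to unitary equivalence.

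The main obstacle is Step 2. The hypothesis is genuinely asymmetric on $\mathcal{H}$: we are given $T^*V=UVT^*$ but \emph{not} $TV=UVT$, so the naive componentwise extension $(Vh,Vd_0,Vd_1,\ldots)$ is not even well-defined, since $V$ in general fails to preserve $\mathcal{D}_T$. One must instead read off the action of $\widetilde{V}$ on each $\mathcal{D}_T$-summand from the iterated twist $(T^*)^nV=U^nV(T^*)^n$, and the essential technical work is to show that this prescription (i)~is well-defined on the dense subspace $\bigvee_{n\ge 0}S^n\mathcal{H}$, (ii)~yields an isometry on $\mathcal{K}$, and (iii)~satisfies \emph{both} doubly twisted identities on $\mathcal{K}$, even though only the single asymmetric identity was assumed on $\mathcal{H}$.
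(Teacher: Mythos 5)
Your overall strategy is the same as the paper's: extend the twist to a unitary $\widetilde{U}$ on $\mathcal{K}$, extend $V$ to an isometry $\widetilde{V}$ on the dense span $\mathcal{L}=\operatorname{span}\{S^n h: n\in\mathbb{Z}_+,\,h\in\mathcal{H}\}$ by forcing the twisted intertwinings, verify the four relations there, take closures, and then quote Theorem~\ref{Slocinski result-twist}; the uniqueness argument via the canonical intertwiner between two minimal dilations is also the paper's. The formula your ``forcing'' step produces is exactly $\widetilde{V}(S^n h)=S^n U^n V h$. But the part you label as ``the essential technical work'' --- showing this prescription is well-defined and isometric on $\mathcal{L}$ --- is the heart of the proof, and you do not supply it. Without it the argument is not complete: the cyclic ``layers'' $S^n\mathcal{H}$ overlap, so a representation $\sum_n\alpha_n S^n h_n$ of a vector in $\mathcal{L}$ is far from unique, and you must show the value assigned does not depend on the representation.

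The paper closes this gap by a direct inner-product computation: for $n\ge m$,
\[
\langle S^n U^n V h_n,\; S^m U^m V g_m\rangle
=\langle U^n V h_n,\; T^{*(n-m)} U^m V g_m\rangle
=\langle V h_n,\; V T^{*(n-m)} g_m\rangle
=\langle h_n,\; T^{*(n-m)} g_m\rangle
=\langle S^n h_n,\; S^m g_m\rangle,
\]
using only $S^*|_{\mathcal{H}}=T^*$, the single asymmetric hypothesis $T^*V=UVT^*$ (iterated to $T^{*k}V=U^k V T^{*k}$), the commutation of $U$ with $V$ and $T$, and the isometry of $V$. Bilinearity then gives
\[
\Bigl\|\sum_n\alpha_n S^n U^n V h_n\Bigr\|=\Bigl\|\sum_n\alpha_n S^n h_n\Bigr\|,
\]
which simultaneously yields well-definedness (zero maps to zero) and that $\widetilde{V}$ is norm-preserving on $\mathcal{L}$, hence extends to an isometry on $\mathcal{K}$. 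This identity is where the one-sided hypothesis does all the work, and it is precisely the step you need to insert. Your choice of the Sch\"affer model is fine but unnecessary and, as you yourself observe, slightly awkward here since $V$ need not preserve $\mathcal{D}_T$ under the one-sided twist; the abstract dense-span argument avoids that issue entirely. Everything else in your outline (lifting $U$ to $\widetilde{U}$, verifying the four intertwinings once the formula is in hand, and the uniqueness via the natural unitary $S_1^n h\mapsto S_2^n h$) is correct and matches the paper.
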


\begin{proof} 
Suppose that $\left( T, V \right)$ is a pair consisting of a 
contraction $T$ and an isometry $V$ on $\mathcal{H}$. Also 
$T^*V=UVT^*$, where $U$ is unitary and $T,V \in \{U\}'$.
Thus the pairs $(T, U)$ and $(V, U)$ are doubly commuting on 
$\mathcal{H}$.

Let $S$ on $\mathcal{K}$ be the minimal isometric dilation
of $T$. Then 
\[
  \mathcal{K}= \overline{span} \left\lbrace S^nh:h \in \mathcal{H},
n \in \mathbb{Z}_+ \right\rbrace,  
\]
$S^*(\mathcal{H}) \subseteq \mathcal{H}$ and
$ T^*= S^*|_{\mathcal{H}}$. 
Also $TP_{\mathcal{H}}=P_{\mathcal{H}}S $ 
where $P_{\mathcal{H}}$ is the orthogonal projection from 
$\mathcal{K}$ onto $\mathcal{H}$. Consider
\[
\cll = \mbox{span}\{ S^n h_n: n \in \mathbb{Z}_+, h_n \in \clh \}.
\]

Define $\widetilde{V}$ on $\mathcal{L}$ as
\[
\widetilde{V} \left( \sum_{n=0}^{N} \alpha_n S^n h_n \right)
	= \sum_{n=0}^{N}\alpha_n S^n U^n V h_n, 
\]
where $\alpha_n \in \mathbb{C}$, $h_n \in \mathcal{H}$.
We shall firstly show that the map $\widetilde{V}$ is well defined on $\cll$. 
To do that we consider	the following. 
	
Let $\alpha_n, \beta_m \in \mathbb{C}$, and $h_n, g_m \in \mathcal{H}$. Then
\begin{align}\label{mainlemma_equa1}
\langle \sum_{n=0}^{N} \alpha_n S^n U^n Vh_n, \sum_{m=0}^{M} \beta_m S^m U^m V g_m \rangle	
& = \sum_{n=0}^N \sum_{m=0}^M \alpha_{n} \bar{ \beta}_{m}  \langle  S^n U^nVh_n, S^m U^mV g_m  \rangle. 
\end{align}	
Suppose $n \geq m$ for fixed $m, n$. Using the fact that $V$ is an 
isometry on $\clh$, $S$ is an isometric dilation of $T$, $T^*U=UT^*$ and
$T^*V =UV T^*$, we obtain
\begin{align*}
\langle  S^n U^n Vh_n, S^m U^m V g_m \rangle
&  = \langle  S^{n-m}U^n Vh_n, U^m  V g_m \rangle \\
& = \langle U^n Vh_n,  S^{*(n-m)} U^m V g_m \rangle \\
&=\langle U^n Vh_n,  T^{*(n-m)} U^m V g_m \rangle \\
&= \langle U^n Vh_n,  U^m T^{*(n-m)}V g_m \rangle \\
&= \langle U^{n-m} Vh_n,   U^{n-m}VT^{*(n-m)} g_m \rangle \\
& = \langle V h_n,  VT^{*(n-m)} g_m \rangle  \\
& = \langle h_n,  T^{*(n-m)} g_m \rangle  \\
& = \langle h_n,  S^{*(n-m)} g_m \rangle  \\
& = \langle S^{n-m} h_n,  g_m \rangle \\
& = \langle S^{n} h_n,  S^{m}g_m \rangle.
\end{align*}
Thus from the above equation (\ref{mainlemma_equa1}), we have
\begin{align*}
\langle \sum_{n=0}^{N} \alpha_n S^n U^n Vh_n, \sum_{m=0}^{M} \beta_m S^m U^m V g_m \rangle
&= \langle \sum_{n=0}^N  \alpha_{n}  S^{n} h_n,  \sum_{m=0}^M { \beta_{m}} S^{m}g_m \rangle. 
\end{align*}
In particular, we have
\begin{align}\label{mainlemma_equa2}
	\| \sum_{n=0}^{N} \alpha_n S^n U^n Vh_n \|^2 
	& = \| \sum_{n=0}^N  \alpha_{n}  S^{n} h_n \|^2. 
\end{align}
Suppose that
\[
\sum_{n=0}^N  \alpha_{n}  S^{n} h_n =  \sum_{m=0}^M { \beta_{m}} S^{m}g_m. 
\]	
Then from the above equality (\ref{mainlemma_equa2}), 
it is easy to see that
\[
\|  \sum_{n=0}^N  \alpha_{n}  S^{n} U^n V h_n -  \sum_{m=0}^M { \beta_{m}} S^{m} U^m  V g_m \|^2 
= \| \sum_{n=0}^N  \alpha_{n}  S^{n} h_n -  \sum_{m=0}^M { \beta_{m}} S^{m}g_m \|^2 =0. 
\]	
Therefore,
\[
\sum_{n=0}^N  \alpha_{n}  S^{n} U^n  V h_n =  \sum_{m=0}^M { \beta_{m}} S^{m} U^m  V g_m.
\]
Hence from the definition of $\widetilde{V}$, we have	
\[
\widetilde{V} \left( \sum_{n=0}^{N} \alpha_n S^n h_n \right)
= \sum_{n=0}^{N}\alpha_n S^n U^n V h_n. 
\]	
This proves that $\widetilde{V}$ is well defined on $\cll$. 
From equation (\ref{mainlemma_equa2}),
we can conclude that $\widetilde{V}$ is bounded as well as norm 
preserving linear operator on $\cll$. Again $\bar{\cll}=\clk$ 
follows that $\widetilde{V}$ is an isometry on $\clk$.
Also the definition of $\widetilde{V}$ on $\mathcal{K}$ 
implies that 
$\widetilde{V}(h_0)=V(h_0) $ for all $ h_0 \in \mathcal{H}$,
 i.e., $\widetilde{V}|_{\mathcal{H}}=V$. 
So $\widetilde{V}$ on $\mathcal{K}$ is an extension of $V$.

Now define $\widetilde{U}$ on $\mathcal{L}$ as
\[
\widetilde{U} \left( \sum_{n=0}^{N} a_n S^n h_n \right)
= \sum_{n=0}^{N}a_n S^n U h_n, 
\]
where $a_n \in \mathbb{C}$, $h_n \in \mathcal{H}$.
In the similar way as above, we can prove that $\widetilde{U}$ is well-defined 
and bounded as well as norm preserving operator on $\cll$. Since $U$ is unitary,
$\widetilde{U}$ is an isometric operator from $\cll$ onto $\cll$. Again $\bar{\cll}
=\clk$. Now using continuity we can show that $\widetilde{U}$ can be extended to 
a unitary operator (denoted by same $\widetilde{U}$) on $\clk$. From the definition 
of $\widetilde{U}$ on $\mathcal{K}$, we show  that 
$\widetilde{U}(h_0)=U(h_0) $ for all $ h_0 \in \mathcal{H}$,
i.e., $\widetilde{U}|_{\mathcal{H}}=U$.   

We shall now show that  $S$ and $\widetilde{V}$ are doubly 
twisted with respect to the twist $\widetilde{U}$ on $\mathcal{L}$.
Firstly,
\begin{align*}	
	S\widetilde{U} \left( \displaystyle{\sum_{n=0}^{N} \alpha_n S^n  h_n }\right) 
	= S \left( \displaystyle{\sum_{n=0}^{N}\alpha_n S^{n} U h_n }\right) 
	&= \displaystyle{\sum_{n=0}^{N} \alpha_n S^{n+1} U h_n }\\ 
%	&= \left( \displaystyle{\sum_{n=0}^{N} \alpha_n S^{n+1}  U h_n }\right)\\ 
	&=  \widetilde{U} \left( \displaystyle{\sum_{n=0}^{N} \alpha_n S^{n+1} h_n }\right)\\  
	&=\widetilde{U}S \left( \displaystyle{\sum_{n=0}^{N} \alpha_n S^{n} h_n }\right).
\end{align*}
Again,
 \begin{align*}	
     \widetilde{V}	\widetilde{U} \left( \displaystyle{\sum_{n=0}^{N} \alpha_n S^n  h_n }\right) 
	= \widetilde{V} \left( \displaystyle{\sum_{n=0}^{N}\alpha_n S^{n} U h_n }\right) 
	&= \displaystyle{\sum_{n=0}^{N} \alpha_n S^nU^n V U h_n }\\ 
	&= \displaystyle{\sum_{n=0}^{N} \alpha_n S^n UU^{n} V h_n }\\ 
	&=  \widetilde{U} \left( \displaystyle{\sum_{n=0}^{N} \alpha_n S^{n}U^n Vh_n }\right)\\  
	&=\widetilde{U}\widetilde{V} \left( \displaystyle{\sum_{n=0}^{N} \alpha_n S^{n} h_n }\right).
\end{align*}
Using the definition of $\widetilde{U}$ and $\widetilde{V}$, we have
\begin{align*}
 \widetilde{V}S \left( \displaystyle{\sum_{n=0}^{N} \alpha_n S^{n} h_n }\right)
 =	\widetilde{V} \left( \displaystyle{\sum_{n=0}^{N} \alpha_n S^{n+1} h_n }\right)
 &=\displaystyle{\sum_{n=0}^{N} \alpha_n S^{n+1} U^{n+1}  V h_n } \\
&=\widetilde{U} \left(\displaystyle{\sum_{n=0}^{N} \alpha_n S^{n+1} U^{n}  V h_n } \right)\\
&=\widetilde{U}S \left(\displaystyle{\sum_{n=0}^{N} \alpha_n S^{n} U^{n}  V h_n } \right) \\
& = \widetilde{U} S\widetilde{V} \left( \displaystyle{\sum_{n=0}^{N} \alpha_n S^n  h_n }\right).
\end{align*}	
Also
\begin{align*}
\widetilde{U} \widetilde{V}S^* \left( \displaystyle{\sum_{n=0}^{N} \alpha_n S^{n} h_n }\right) 
& = \widetilde{U} \widetilde{V} \left( \displaystyle{\sum_{n=0}^{N} \alpha_n S^* S^n h_n }\right)\\
& = \widetilde{U} \widetilde{V} \left( \alpha_0S^*h_0 +
		 \displaystyle{\sum_{n=1}^{N} \alpha_n S^{n-1} h_n }\right)\\
& =\widetilde{U} \widetilde{V} \left( \alpha_0T^*h_0 \right) +
\widetilde{U}\widetilde{V} \left( \displaystyle{\sum_{n=1}^{N} \alpha_n S^{n-1} h_n }\right)\\
& =\alpha_0 \widetilde{U} ( VT^*h_0)  + \widetilde{U}
\left( \displaystyle{\sum_{n=1}^{N} \alpha_n S^{n-1} U^{n-1} V h_n }\right)\\
& = \alpha_0 U VT^*h_0  + \left( 	\displaystyle{\sum_{n=1}^{N} \alpha_n S^{n-1} U U^{n-1} 
V h_n }\right)\\			
& =  \alpha_0 T^*Vh_0  +  S^*\left( \displaystyle{\sum_{n=1}^{N} \alpha_n S^{n} U^nV h_n }\right) \\
& =  S^* [\alpha_0 Vh_0  +  \left( \displaystyle{\sum_{n=1}^{N} \alpha_n S^{n}U^nV h_n }\right)]\\
& =  S^* \widetilde{V}[\alpha_0 S^{0} h_0  +  \left( \displaystyle{\sum_{n=1}^{N} \alpha_n S^{n} h_n }\right)]\\
& =  S^* \widetilde{V} \left( \displaystyle{\sum_{n=0}^{N} \alpha_n S^{n} h_n }\right). 
\end{align*}
This implies that  	
\[
 S\widetilde{U}=\widetilde{U}S, \quad \widetilde{V}\widetilde{U}=\widetilde{U}\widetilde{V}, 
 \quad  \widetilde{V}S=\widetilde{U}S\widetilde{V}  \quad \mbox{and}
  \quad   S^* \widetilde{V}= \widetilde{U}\widetilde{V}S^*
 \quad \mbox{on~} \cll.
\]		
Now the norm preserving operator $\widetilde{V}, \widetilde{U}$ on $\cll$
can be extended uniquely by continuity (again denoted by same 
$\widetilde{V}$, $\widetilde{U}$) to the closure of $\cll$ 
(i.e. $\bar{\cll}= \clk $ ) such that
\[
   S\widetilde{U}=\widetilde{U}S, \quad
    \widetilde{V}\widetilde{U}=\widetilde{U}\widetilde{V}, \quad 
     \widetilde{V}S=\widetilde{U}S\widetilde{V}  \quad \mbox{and} 
   \quad   S^* \widetilde{V}= \widetilde{U}\widetilde{V}S^*.
\]	
Therefore, the pair $(S, \widetilde{V})$ is a doubly twisted isometries 
with respect to the twist $\widetilde{U}$ on the minimal space $\mathcal{K}$. 
Hence, by Theorem \ref{Slocinski result-twist}, the doubly twisted pair 
of isometries $(S,\widetilde{V})$ admits Wold-type decomposition 
on $\mathcal{K}$.

Let $S_1$ and $S_2$ be two minimal isometric dilations on the space 
$\clk_1$ and  $\clk_2$ for the contraction $T$, respectively. Therefore,
\[
	\mathcal{K}_i= \overline{span} \left\lbrace S_i^nh:h \in \mathcal{H},
	n \in \mathbb{Z}_+ \right\rbrace , S_i^*(\clh) \subseteq \clh \ \text{and} 
	\ T^*=S_i^*|_{\clh} \ \ (i=1,2).
\]
Now, for $i=1,2$, consider $\cll_i = \mbox{span}\{ S_i^n h_n: n \in 
\mathbb{Z}_+, h_n \in \clh \}$ and define $\widetilde{V}_i$ on $\mathcal{L}_i$ as
\[
\widetilde{V}_i \left( \sum_{n=0}^{N_0} \alpha_n S_i^n h_n \right)
	= \sum_{n=0}^{N_0}\alpha_n S_i^n V h_n, 
\]
where $\alpha_n \in \mathbb{C}$, $h_n \in \mathcal{H}$.
Clearly, the maps $\widetilde{V}_i$ are bounded as well as norm 
preserving operators on $\cll_i$. Define a map 
$\hat{U}: \cll_1 \rightarrow \cll_2$ by
\[
    \hat{U} \left( \sum_{n=0}^{N} \alpha_n S_1^n h_n \right)
    = \sum_{n=0}^{N}\alpha_n S_2^n h_n. 
\] 
Using the fact that $T^*=S_i^*|_{\clh}$, it is easy to prove that the map 
$\hat{U}$ is well-defined and isometric linear map from $\cll_1$ onto $\cll_2$. Also
\begin{align*}
   \hat{U} \widetilde{V_1} \left( \sum_{n=0}^{N} \alpha_n S_1^n h_n \right)
   &=\hat{U}\left( \sum_{n=0}^{N} \alpha_n S_1^n U^n V h_n \right) \\
   &= \sum_{n=0}^{N} \alpha_n S_2^n  U^n Vh_n\\
   &= \widetilde{V_2} \left( \sum_{n=0}^{N} \alpha_n S_2^n h_n \right)\\
   &=\widetilde{V_2}\hat{U} \left( \sum_{n=0}^{N} \alpha_n S_1^n h_n \right).
\end{align*}
Therefore, $\hat{U}\widetilde{V_1} =\widetilde{V_2} \hat{U}$. Since
$\overline{\cll_i}=\clk_i$  for $i=1,2$; then by continuity $\hat{U}$ can 
be extended to a unitary operator (again denoted by same $\hat{U}$) from
$\clk_1$ to $\clk_2$. It says that the isometric extensions $\widetilde{V_1}$ 
and $\widetilde{V_2}$ of $V$ are isomorphic. Hence the pair $(S, \widetilde{V})$ 
is unique up to unitary equivalence on the minimal dilation space.
This completes the proof.
\end{proof}

The following results are immediate applications of the above Theorem \ref{Main result}. 
The result is known \cite{KO-WOLD TYPE} but here we provide a new and simple 
proof.

\begin{cor}\label{cor3}
Let $V$ be an isometry and $W$ be a coisometry on a Hilbert space $\clh$. 
If $(V,W)$ is twisted, then the pair $(V,W)$ is doubly twisted. 	
\end{cor}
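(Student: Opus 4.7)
The strategy is to apply Theorem~\ref{Main result} to the pair $(V, W^{*})$, exploiting the fact that an isometry is its own minimal isometric dilation.

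Set $T:= V$, which is a contraction (being an isometry), and take the theorem's ``isometry'' to be $W^{*}$, which is an isometry because $W$ is a co-isometry. The twist $U$ commutes with both $V$ and $W$, and hence with $W^{*}$, so the commutant hypothesis is met. The twisted relation required by Theorem~\ref{Main result} is $T^{*}W^{*}= U W^{*} T^{*}$, i.e., $V^{*}W^{*} = U W^{*}V^{*}$. Taking the adjoint of the given relation $VW = UWV$ yields $W^{*}V^{*} = V^{*}W^{*}U^{*}$, and multiplying on the right by $U$ and then commuting $U$ past $V^{*}$ and $W^{*}$ rearranges this to $V^{*}W^{*} = U W^{*}V^{*}$, verifying the hypothesis.

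Since $V$ is already an isometry, its minimal isometric dilation is simply $V$ itself on $\clk = \clh$. In the construction from the proof of Theorem~\ref{Main result}, the extension is defined by $\widetilde{V_{\mathrm{thm}}}(S^{n}h) = S^{n}U^{n} V_{\mathrm{thm}} h$; with $\clk = \clh$, $S = V$, and $V_{\mathrm{thm}} = W^{*}$, this returns $W^{*}$ itself. Consequently, Theorem~\ref{Main result} yields that the pair $(V, W^{*})$ is doubly twisted on $\clh$, producing in particular the two relations $W^{*}V = U V W^{*}$ and $V^{*}W^{*} = U W^{*}V^{*}$.

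Finally, one transfers back to $(V, W)$ by taking adjoints: the adjoint of $W^{*}V = U V W^{*}$ is $V^{*}W = W V^{*} U^{*}$, and commuting $U^{*}$ past $V^{*}$ and $W$ gives $V^{*}W = U^{*} W V^{*}$. Together with the given $VW = UWV$ and the fact that $U$ commutes with $V$ and $W$, this is exactly the definition of $(V, W)$ being doubly twisted with twist $U$. The only real hazard is careful bookkeeping of $U$ versus $U^{*}$ across each adjoint step; everything else is a direct specialization of Theorem~\ref{Main result}.
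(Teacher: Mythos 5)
Your proof is correct, and it takes a genuinely different route from the paper's. The paper passes to the minimal isometric dilation $S$ of the co-isometry $W$ on a larger space $\clk$ (where $S$ is unitary), lifts $V$ and $U$ to $\widetilde V$, $\widetilde U$ on $\clk$, shows $(\widetilde V, S)$ is twisted, uses that $S$ is unitary to upgrade this to the doubly twisted relation, and then restricts back to $\clh$ to read off $W^*V = UVW^*$. You instead apply Theorem~\ref{Main result} directly to the pair $(V, W^*)$: since $V$ is its own minimal isometric dilation, the dilation space collapses to $\clh$, the canonical extension of $W^*$ is $W^*$ itself, and the needed hypothesis $V^*W^* = UW^*V^*$ is just the adjoint of the given relation $VW=UWV$ after shuffling $U$. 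This is arguably cleaner: it literally instantiates the theorem (hypothesis verified, conclusion read off) rather than re-running its proof pattern on the co-isometry $W$, where the required dilation hypothesis $W^*V = UVW^*$ is in fact the relation one is trying to establish. One transfer by adjoints then gives $V^*W = U^*WV^*$, which together with $VW=UWV$ and $V,W\in\{U\}'$ is exactly the doubly twisted condition. Worth noting that the Remark immediately following the corollary in the paper contains a third, purely algebraic one-line proof — expanding $(W^*V - UVW^*)^*(W^*V - UVW^*)=0$ — which is the most elementary of all three routes and avoids dilation theory entirely.
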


\begin{proof}
Suppose that $V$ is an isometry, $W$ is a co-isometry and $U$ is a unitary 
on a Hilbert space $\clh$ satisfying $VW=UWV$ and $V,W \in \{U\}'$.
Let $S$ on $\mathcal{K} \supseteq \clh$ be the minimal isometric dilation of the 
co-isometry $W$, that is, 
\[
  \mathcal{K}= \overline{span} \left\lbrace S^nh:h \in \mathcal{H},
n \in \mathbb{Z}_+ \right\rbrace,  
\]
$S^*(\mathcal{H}) \subseteq \mathcal{H}$ and $W^*= S^*|_{\mathcal{H}}$. 
Now define $\widetilde{V}$ on $\cll = \mbox{span}\{ S^n h_n: n \in 
\mathbb{Z}_+, h_n \in \clh \}$ as
\[
\widetilde{V} \left( \sum_{n=0}^{N} \alpha_n S^n h_n \right)
= \sum_{n=0}^{N}\alpha_n S^n U^n V h_n
 \quad  \text{and} \quad 
\widetilde{U} \left( \sum_{n=0}^{N} \beta_n S^n h_n \right)
= \sum_{n=0}^{N}\beta_n S^n U h_n,
\]
where $\alpha_n, \beta_n\in \mathbb{C}$, $h_n \in \mathcal{H}$. 
Now in similar lines as Theorem \ref{Main result}, we can prove that 
$\widetilde{U}$ is unitary and $\widetilde{V}$ is isometry on $\clk$
with $\widetilde{V}S = \widetilde{U} S \widetilde{V}$ and 
$\widetilde{V} , S \in \{\widetilde{U}\}'$. Hence the pair $(\widetilde{V}, S)$ 
is a twisted isometries with a twist $\widetilde{U}$ on the minimal 
space $\mathcal{K}$.
	
Since $W$ is a co-isometry on $\clh$, the minimal isometric dilation $S$ 
of $W$ is unitary on $\clk$. Consequently, $S^*\widetilde{V}=\widetilde{U}
\widetilde{V} S^*$ which is same as $\widetilde{V}^*S=\widetilde{U}^*
S\widetilde{V}^*$. Using the facts $S^*|_{\clh}=W^*$, $\widetilde{U}|_{\clh}=U$  
and $\widetilde{V}|_{\clh}= V$, we have
\[
W^*Vh= S^*Vh=S^* \widetilde{V} h= \widetilde{U} \widetilde{V} S^* h=  
\widetilde{U}\widetilde{V} W^* h= \widetilde{U}VW^*h=UVW^*h ~~~~
\quad (h \in \clh). 
\] 
This implies $W^*V=UVW^*$ on $\clh$. Hence the pair $\left(V, W \right)$ 
is doubly twisted. 	
\end{proof}

\begin{rem}
It is note worthy to mention that we have used Theorem \ref{Main result}
to prove the above result. Alternatively, we provide a simple proof of the
result as follows: Suppose $(V,W)$ is a twisted pair with a twist $U$ on 
$\clh$ such that $V$ is an isometry and $W$ is a coisometry. Then we obtain
\begin{align*}
	&(W^*V-UVW^*)^*(W^*V-UVW^*)\\
	&=(V^*W-WV^*U^*)(W^*V-UVW^*) \\
	&=V^*WW^*V-U^*WV^*W^*V-UV^*WVW^* +U^*UWV^*VW^*\\
	&=I_{\clh}-U^*UWW^*V^*V-UU^*V^*VWW^*+I_{\clh}\\
	&=0.
\end{align*}
Consequently,
\[
W^*V-UVW^*=0, \quad \mbox{that is}, \quad V^*W=U^*WV^*.
\]
Hence the pair $(V,W)$ is doubly twisted on $\clh$. 
\end{rem}

In the following, we record the above result.
\begin{lem}\label{Doubly-twisted-isometries}
Let $(V_1, V_2)$ be a pair of isometries and $U$ be unitary on 
$\clh$ such that $V_1^*V_2=U^*V_2V_1^*$ with $V_1,V_2 \in \{U\}'$.
Then $(V_1, V_2)$ is doubly twisted isometries with a twist $U$. 
\end{lem}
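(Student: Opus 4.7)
The plan is to imitate the approach used in the remark immediately preceding this lemma (in the proof that a twisted pair of an isometry and co-isometry is automatically doubly twisted). There, the crucial computation was to evaluate $(W^*V-UVW^*)^*(W^*V-UVW^*)$ and show it vanishes using the isometry/co-isometry conditions and the commutation with $U$. Here I want to do the symmetric computation: show that $(V_1V_2-UV_2V_1)^*(V_1V_2-UV_2V_1)=0$, which will force $V_1V_2=UV_2V_1$, the only piece needed beyond the hypotheses to conclude $(V_1,V_2)$ is doubly twisted with twist $U$.

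First I would record the consequences of the hypotheses: since $V_1,V_2\in\{U\}'$ and $U$ is unitary, also $V_1^*,V_2^*$ commute with both $U$ and $U^*$. Next, I extract two identities from the assumption $V_1^*V_2=U^*V_2V_1^*$. Multiplying on the right by $V_1$ and using $V_1^*V_1=I_{\clh}$ gives $V_1^*V_2V_1=U^*V_2$. Taking adjoints and using the commutation with $U$ gives $V_1^*V_2^*V_1V_2=V_2^*UV_2=V_2^*V_2U=U$. Together these are the two simplifications I will need.

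Now I expand, using $V_i^*V_i=I_{\clh}$, $UU^*=I_{\clh}$, and the $U$-commutations:
\begin{align*}
(V_1V_2-UV_2V_1)^*(V_1V_2-UV_2V_1)
&= 2 I_{\clh} - V_2^*U V_1^*V_2 V_1 - V_1^*U^*V_2^* V_1 V_2 \\
&= 2 I_{\clh} - V_2^*U(U^*V_2) - U^*(V_1^*V_2^*V_1V_2) \\
&= 2 I_{\clh} - V_2^*V_2 - U^*U \;=\; 0.
\end{align*}
Hence $V_1V_2=UV_2V_1$. Combined with the given $V_1^*V_2=U^*V_2V_1^*$ and $V_iU=UV_i$ for $i=1,2$, this is precisely the definition of $(V_1,V_2)$ being a doubly twisted pair with twist $U$.

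There is no real obstacle here — the only subtlety is bookkeeping of the $U$-commutations when rearranging factors in the cross terms, and noticing that the hypothesis $V_1^*V_2=U^*V_2V_1^*$ gives exactly the cancellation needed in both cross terms (once directly, once after taking adjoints). An alternative route would be to invoke Theorem \ref{Main result} by viewing one of the isometries as a contraction and dilating, but the direct algebraic proof above is shorter and avoids any dilation machinery.
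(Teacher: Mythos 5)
Your proof is correct and matches the paper's intended argument: the paper states Lemma~\ref{Doubly-twisted-isometries} merely ``records'' the result, with the expected justification being either the elementary computation in the preceding Remark or the reduction to Corollary~\ref{cor3} via $(V,W)=(V_2,V_1^*)$, and your direct expansion of $(V_1V_2-UV_2V_1)^*(V_1V_2-UV_2V_1)=0$ is exactly that algebraic route adapted to the two-isometry setting. The intermediate identities $V_1^*V_2V_1=U^*V_2$ and $V_1^*V_2^*V_1V_2=U$ and the final cancellation $2I-I-I=0$ all check out.
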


Using Theorem \ref{Main result}, we can generalize the result 
Theorem 2.3 in \cite{BKS-CANONICAL} to a pair of twisted isometries. 
We only state the result as the proof is similar lines.

\begin{thm}\label{Twisted-finite dim wandering}
Let $(V_1, V_2)$ be a pair of twisted isometries on $\clh$ and  
$ \dim \ker {V_2^*}$ is finite. If $\mathcal{H}=\mathcal{H}_{u1} 
\oplus \mathcal{H}_{s1}$ is the Wold decomposition for $V_1$, then 
the decomposition reduces $V_2$.
\end{thm}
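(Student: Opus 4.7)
The plan is to show that the off-diagonal block $B$ vanishes in the matrix representation
\[
V_2 \;=\; \begin{pmatrix} A & B \\ 0 & C \end{pmatrix}
\]
of $V_2$ with respect to $\clh = \clh_{u1} \oplus \clh_{s1}$, which is precisely the statement that the Wold decomposition of $V_1$ reduces $V_2$. The argument follows the commuting-case template of Theorem~2.3 of \cite{BKS-CANONICAL}, with Theorem~\ref{Main result} invoked to absorb the twist.

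First I would establish the upper-triangular form: iterating $V_1V_2 = UV_2V_1$ gives $V_1^nV_2 = U^nV_2V_1^n$, hence $V_2V_1^n\clh = U^{*n}V_1^nV_2\clh \subseteq V_1^n\clh$ (using that $U$ is unitary and commutes with both $V_1$ and $V_2$), and intersecting over $n$ yields $V_2\clh_{u1}\subseteq\clh_{u1}$. Writing also $V_1 = W\oplus S$ and $U = U_1\oplus U_2$ (the latter by Corollary~\ref{DC-reduce}, since $(V_1,U)$ is doubly commuting), the twisted relation translates into the block equations $WA = U_1 AW$, $WB = U_1 BS$, and $SC = U_2 CS$, while $V_2^*V_2 = I$ gives $A^*A = I$, $A^*B = 0$ (so $\mathrm{ran}\,B\subseteq\ker A^*$), and $B^*B+C^*C = I$.

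To conclude $B = 0$, I would use that $I - V_2V_2^* = P_{\ker V_2^*}$ has finite rank by hypothesis, so its $(1,1)$ block $P_{\ker A^*} - BB^*$ is finite rank; combined with $\mathrm{ran}(BB^*)\subseteq\ker A^*$, this forces $\ker A^*$ to be finite-dimensional, and hence $B$ to be finite-rank (in particular $B^*B$ is compact). From $WB = U_1 BS$ one derives $B^*B = S^*B^*B S$, so that $\|By\|^2 = \langle B^*B S^n y, S^n y\rangle$ for every $n\geq 0$ and $y\in\clh_{s1}$; since $S^n y\to 0$ weakly (as $S$ is a pure shift) and $B^*B$ is compact, the right-hand side tends to zero, forcing $B = 0$. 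Theorem~\ref{Main result} is invoked to pass, via the minimal isometric dilation of the pair $(T,V) = (V_1^*, V_2)$ (which satisfies $T^*V = V_1V_2 = UV_2V_1 = UVT^*$), to a doubly twisted pair on a larger space in which the twist $U_1$ appearing in $WB = U_1 BS$ is cleanly absorbed, so that the compactness/weak-convergence argument goes through without any twist obstruction. The main obstacle will be pinning down $\dim\ker A^* < \infty$ from $\dim\ker V_2^* < \infty$: this uses the full block identity $I - V_2V_2^* = P_{\ker V_2^*}$ rather than just its $(1,1)$ entry, together with a non-trivial positive-operator argument exploiting $\mathrm{ran}\,B \subseteq \ker A^*$ to compare the finite-rank perturbation $P_{\ker A^*} - BB^*$ of $BB^*$ to the projection $P_{\ker A^*}$ itself.
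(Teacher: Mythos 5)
The paper does not actually provide a proof of this theorem: it merely remarks that the statement ``generalizes Theorem~2.3 of \cite{BKS-CANONICAL}'' via Theorem~\ref{Main result} and states that ``the proof is similar lines.'' So there is no argument in the paper to compare against line by line; what follows is an assessment of your attempt on its own terms.

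Much of your outline is sound and is indeed the natural adaptation of the commuting-case argument. The upper-triangularity $V_2\clh_{u1}\subseteq\clh_{u1}$ from $V_2V_1^n=U^{*n}V_1^nV_2$, the block identities coming from $V_1V_2=UV_2V_1$ and $V_2^*V_2=I$, the reduction $U=U_1\oplus U_2$ via Corollary~\ref{DC-reduce}, the derivation $B^*B=S^*B^*U_1^*WW^*U_1BS=S^*B^*BS$, and the final step (compact $B^*B$, $S^ny\to 0$ weakly $\Rightarrow$ $\|By\|^2=\langle B^*BS^ny,S^ny\rangle\to 0$) are all correct. Note that once you have written $B^*B=S^*B^*BS$ the twist has already been cancelled, so the late appeal to Theorem~\ref{Main result} (dilating $(V_1^*,V_2)$ ``to absorb the twist'') is superfluous in your argument; you also never explain how the finite-codimension hypothesis or the conclusion would be transported between $\clh$ and the dilation space $\clk$, so as written that paragraph does no work.

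The genuine gap is the step you yourself flag as ``the main obstacle'': you need $B^*B$ compact, which you reduce to $\dim\ker A^*<\infty$ (via $\mathrm{ran}\,B\subseteq\ker A^*$), and you first assert that this follows from ``$P_{\ker A^*}-BB^*$ is finite rank, combined with $\mathrm{ran}(BB^*)\subseteq\ker A^*$.'' That implication is false. Take $\ker A^*$ infinite-dimensional and $B:\clh_{s1}\to\clh_{u1}$ any operator whose adjoint $B^*$ is isometric on $\ker A^*$ and vanishes on $(\ker A^*)^\perp$; then $BB^*=P_{\ker A^*}$, so $P_{\ker A^*}-BB^*=0$ (rank zero and $\mathrm{ran}(BB^*)\subseteq\ker A^*$), yet $\ker A^*$ is infinite-dimensional. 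Thus the $(1,1)$-block of $I-V_2V_2^*$ together with the range containment gives you no control whatsoever over $\dim\ker A^*$. You then concede that the full matrix of $I-V_2V_2^*$ must be used and that ``a non-trivial positive-operator argument'' is needed, but you never produce it. This is precisely the point where the hypothesis $\dim\ker V_2^*<\infty$ must do real work, and it is essentially the whole content of the theorem (indeed, once $B=0$ one has $\ker A^*\subseteq\ker V_2^*$, so proving $\dim\ker A^*<\infty$ in advance is logically very close to the conclusion). Until that step is actually supplied, the proof is incomplete.
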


Suppose that $S$ on $\mathcal{K}$ is the minimal isometric dilation of 
a contraction $T$ on $\mathcal{H}$. Then the Wold decomposition of $S$ 
is $S_{u} \oplus S_{s}$ on $\clk=\clk_{u} \oplus \clk_{s}$ such that 
$S_u=S|_{\clk_u}$ is unitary and $S_{s}=S|_{\clk_s}$ is unilateral shift. 
Moreover, 
		\[
		\clk_u={\bigcap_{n \geq 0} S^n \clk}, \qquad
		\clk_s={\bigoplus_{n\geq 0} S^n\clw_S},
		\]
where $\clw_S=\ker S^*$ is the wandering subspace for $S$ defined by 
$\overline{(I-ST^*)\mathcal{H}}$ (cf. \cite{FF-BOOK}). Also defect 
index $\sigma_{T^*}$ for a contraction $T$ on $\clh$ is defined as 
$\dim \overline {D_{T^*}\clh}$, where $D_{T^*}=(I-TT^*)^\frac{1}{2}$.  
If $\sigma_{T^*}$ is  finite, then we have the following result:

\begin{prop}
Let $\left( T, V \right)$ be a pair of twisted operators on 
$\mathcal{H}$ such that $T$ is a contraction and $V$ is an isometry. 
Let $S$ on $\mathcal{K}$ 
be the minimal isometric dilation for $T$ and $\widetilde{V}$ 
on $\mathcal{K}$ is an isometric extension of $V$. If $ \sigma_{T^*}$ 
is finite and $\widetilde{V}_{u} \oplus \widetilde{V}_{s}$ on 
$\widetilde{\clk_{u}} \oplus \widetilde{\clk_{s}}$ is the Wold 
decomposition for isometry $\widetilde{V}$ on $\clk$, then both 
$\widetilde{\clk_{u}}$ and $\widetilde{\clk_{s}}$ reduce $S$.
\end{prop}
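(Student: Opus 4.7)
The plan is to reduce the proposition to a direct application of Theorem~\ref{Twisted-finite dim wandering} with the ordered pair $(V_1,V_2)=(\widetilde{V},S)$ on $\clk$. Two hypotheses have to be verified: that $(\widetilde{V},S)$ is a twisted pair of isometries on $\clk$, and that $\dim\ker S^{*}$ is finite.

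The first hypothesis is already packaged in Theorem~\ref{Main result}. Under the twisting condition that governs this section, that theorem produces the isometric extension $\widetilde{V}$ of $V$ on $\clk$ together with a unitary extension $\widetilde{U}$ of $U$, and shows that $(S,\widetilde{V})$ is in fact a doubly twisted pair of isometries on $\clk$ with twist $\widetilde{U}$. In particular $(\widetilde{V},S)$ is twisted with twist $\widetilde{U}^{*}$, so $V_1=\widetilde{V}$ and $V_2=S$ meet the twisting requirement of Theorem~\ref{Twisted-finite dim wandering}.

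For the defect count, I would use the standard description of the wandering subspace of a minimal isometric dilation, namely $\ker S^{*}=\clw_{S}=\overline{(I-ST^{*})\clh}$. Using $S^{*}|_{\clh}=T^{*}$ and the isometry of $S$, a one-line computation gives, for $h\in\clh$, $\|(I-ST^{*})h\|^{2}=\|h\|^{2}-\|T^{*}h\|^{2}=\|D_{T^{*}}h\|^{2}$, so the assignment $D_{T^{*}}h\mapsto(I-ST^{*})h$ extends by continuity to a unitary identification of $\overline{D_{T^{*}}\clh}$ with $\clw_{S}$. Hence $\dim\ker S^{*}=\sigma_{T^{*}}$, which is finite by assumption.

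With both hypotheses in hand, Theorem~\ref{Twisted-finite dim wandering} applied to $(\widetilde{V},S)$ yields at once that the Wold decomposition $\widetilde{\clk_{u}}\oplus\widetilde{\clk_{s}}$ of $\widetilde{V}$ reduces $S$, which is exactly the desired conclusion. The main obstacle is the identification $\dim\ker S^{*}=\sigma_{T^{*}}$, since it is the only place where a new (though entirely routine) computation is needed; everything else is a direct appeal to previously established results of the paper.
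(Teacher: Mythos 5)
Your argument is correct and follows essentially the same route as the paper's own proof: both invoke Theorem~\ref{Main result} to obtain the twisted pair of isometries $(\widetilde{V},S)$ on $\clk$, both establish $\dim\ker S^{*}=\sigma_{T^{*}}$ via the isometric correspondence $D_{T^{*}}h\mapsto(I-ST^{*})h$ onto the wandering subspace $\overline{(I-ST^{*})\clh}=\ker S^{*}$, and both then conclude by Theorem~\ref{Twisted-finite dim wandering}. The only small slip is in the bookkeeping of the twist---from $\widetilde{V}S=\widetilde{U}S\widetilde{V}$ the pair $(\widetilde{V},S)$ is twisted with twist $\widetilde{U}$, not $\widetilde{U}^{*}$---but this is immaterial since Theorem~\ref{Twisted-finite dim wandering} only requires the pair to be twisted with respect to \emph{some} twist.
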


\begin{proof}
Assume that $\left( T, V \right)$ is a pair of twisted operators with
a twist $U$ on $\mathcal{H}$ such that $T$ is a contraction and $V$ 
is an isometry.

Let $S$ on $\mathcal{K}$ be the minimal isometric dilation of $T$. Therefore, 
\[
\mathcal{K}= \overline{span} \left\lbrace S^nh:h \in \mathcal{H},
		n \in \mathbb{Z}_+ \right\rbrace,  
\]
$S^*(\mathcal{H}) \subseteq \mathcal{H}$ and $ T^*= S^*|_{\mathcal{H}}$. 		
Consider $\cll = \mbox{span}\{ S^n h_n: n \in \mathbb{Z}_+, h_n \in \clh \}$.
Now define $\widetilde{V}$ and $\widetilde{U}$ on $\cll$ as
\[
\widetilde{V} \left( \sum_{n=0}^{N} \alpha_n S^n h_n \right)
= \sum_{n=0}^{N}\alpha_n S^n U^n V h_n
\quad  \text{and} \quad 
\widetilde{U} \left( \sum_{n=0}^{N} \beta_n S^n h_n \right)
= \sum_{n=0}^{N}\beta_n S^n U h_n,
\]
where $\alpha_n, \beta_n\in \mathbb{C}$, $h_n \in \mathcal{H}$. 
It is now easy to check that the map $\widetilde{V},\widetilde{U}$ is 
well-defined and bounded as well as norm  preserving linear operator 
on $\cll$. Again by continuity $\widetilde{V}$ and the untary $\widetilde{U}$ 
can be extended to $\bar{\cll}=\clk$ (write same $\widetilde{V}, \widetilde{U}$ 
on $\clk$). Applying Theorem \ref{Main result}, we have the pair $(S, \widetilde{V})$ 
is a twisted isometries with a twist $\widetilde{U}$ on $\mathcal{K}$.

Now for every $h \in \clh$
\[
\|(I-ST^*)h \|^2= \|h\|^2 -2 \ \text{Re} < h, ST^*h >+ \|T^*h\|^2
=\|h \|^2 -\|T^*h\|^2= \|D_{T^*}h\|^2.
\]
Define $\hat{U}:{D_{T^*}\clh} \rightarrow (I-ST^*)\clh$ as
\[
\hat{U}({D_{T^*}h}) = (I-ST^*)h.
\]		
Then $\hat{U}$ is an isometry from ${D_{T^*}\clh}$ onto $(I-ST^*)\clh$.
Hence $(I-ST^*)\clh$ is finite dimensional as by hypothesis 
$\sigma_{T^*}= \dim \overline {D_{T^*}\clh}$ is finite.  
Therefore, $\hat{U}$ is unitary and $\sigma_{T^*}= \dim(I-ST^*)\clh
= \dim \ker S^*$.

Suppose $\widetilde{V}_{u} \oplus \widetilde{V}_{s}$ on 
$\widetilde{\clk_{u}} \oplus \widetilde{\clk_{s}}$ is the Wold 
decomposition for isometry $\widetilde{V}$ on $\clk$. Since 
$( \widetilde{V}, S)$ is a pair of twisted isometries on 
$\mathcal{K}$ and $\dim \ker S^*$ is finite dimensional, from 
Theorem \ref{Twisted-finite dim wandering} we conclude that the  
decomposition reduces the isometric dilation $S$.

This completes the proof.		
\end{proof}

\section{Doubly twisted Isometries}

In \cite{MSS-PAIRS} the second author, Sarkar and Sankar characterize the
pair of doubly commuting isometries and also studied defect operator. 
In this section, we characterize pair of doubly twisted isometries.

Let $(V_1, V_2)$ be a pair of twisted isometries on a Hilbert space 
$\mathcal{H}$. Throughout this section we will use the following notations:
\begin{align*}
V & =V_1V_2 \\
\clw & =\clw(V)= \clh \ominus V_1V_2\clh,\\
\text{and} \quad \clw_i & =\clw(V_i)=\clh \ominus V_i \clh 
\quad \text{for }\quad i=1,2.
\end{align*}

The following relations are useful to our discussion. 
\begin{lem}\label{Lemma properties of twisted isometries}
Let $(V_1,V_2)$ be a pair of twisted isometries with respect to a twist 
$U$ on a Hilbert space $\clh$ and let $V=V_1V_2$. Then for $n \in \mathbb{N}$, 
we have the following:
\begin{enumerate}
\item $V_1V_2^n=U^n V_2^nV_1$ and $V_2V_1^n=U^{*n}V_1^nV_2$,
\item $V_1V^n=U^nV^nV_1$ and $V_2 V^n=U^{*n}V^n V_2$,
\item  $V_1^*V^n=U^{*(n-1)}V^{n-1}V_2$ and $V_2^*V^n= U^nV^{n-1} V_1$,
\item $V^n= U^{\frac{n(n+1)}{2}} V_2^n V_1^n=U^{*{\frac{n(n-1)}{2}}}V_1^nV_2^n$.
\end{enumerate}
\end{lem}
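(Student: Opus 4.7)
The plan is to prove all four items by induction on $n$, relying only on the defining twisted relation $V_1V_2 = UV_2V_1$, the centrality of the twist $U$ (i.e., $V_iU = UV_i$), and the isometry property $V_i^*V_i = I$. Notably, no doubly twisted hypothesis is used; this is actually the point of the lemma, since doubly twisted was not assumed.

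For item (1), both identities follow by straightforward induction on $n$; the base case $n=1$ is the twisted relation and its adjoint form $V_2V_1 = U^*V_1V_2$, and the inductive step amounts to $V_1V_2^n = (V_1V_2^{n-1})V_2 = U^{n-1}V_2^{n-1}V_1V_2 = U^{n-1}V_2^{n-1}(UV_2V_1) = U^nV_2^nV_1$, freely moving $U$ past $V_2$. For item (2), I would first establish $V_1V = UVV_1$ and $V_2V = U^*VV_2$ directly: for instance $VV_1 = V_1V_2V_1 = V_1(U^*V_1V_2) = U^*V_1V$, hence $V_1V = UVV_1$. An induction identical in shape to item (1), now with $V$ in place of $V_2$ and using that $V$ commutes with $U$, then yields the stated formulas.

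For item (3), the first identity reduces to item (2) because $V_1$ is an isometry: $V_1^*V = V_1^*V_1V_2 = V_2$, so $V_1^*V^n = V_2V^{n-1} = U^{*(n-1)}V^{n-1}V_2$. The only mildly non-routine moment in the whole proof is the derivation of $V_2^*V = UV_1$ without appealing to a doubly twisted relation; I would multiply $V_2V_1 = U^*V_1V_2$ on the left by $V_2^*$ and use $V_2^*V_2 = I$ to get $V_2^*V_1V_2 = UV_1$, i.e., $V_2^*V = UV_1$. Combined with item (2) this gives $V_2^*V^n = UV_1V^{n-1} = U^nV^{n-1}V_1$.

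For item (4), I would induct on $n$ and feed the relations of item (1) into $V^n = V\cdot V^{n-1}$ or $V^n = V^{n-1}\cdot V$. For the first formula, assuming $V^{n-1} = U^{n(n-1)/2}V_2^{n-1}V_1^{n-1}$, one computes $V^n = V_1V_2 \cdot U^{n(n-1)/2}V_2^{n-1}V_1^{n-1} = U^{n(n-1)/2}(V_1V_2^n)V_1^{n-1} = U^{n(n-1)/2}\cdot U^nV_2^nV_1\cdot V_1^{n-1} = U^{n(n+1)/2}V_2^nV_1^n$, since $n(n-1)/2 + n = n(n+1)/2$. The symmetric formula $V^n = U^{*n(n-1)/2}V_1^nV_2^n$ is obtained by the same argument applied to $V^n = V^{n-1}V$, using the second identity of item (1) to commute $V_2$ past $V_1^{n-1}$. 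Throughout, the main (essentially only) obstacle is bookkeeping of exponents of $U$, which stays clean because $U$ is central with respect to the semigroup generated by $V_1$ and $V_2$.
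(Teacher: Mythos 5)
Your proof is correct and takes the same route as the paper, which gives essentially no proof at all (it simply records the defining relations $V_1V_2=UV_2V_1$, $UV_i=V_iU$ and declares that the identities follow); you have supplied the routine inductions that the paper leaves to the reader. One small bookkeeping slip in your sketch of item (4): after writing $V^n = V^{n-1}V$ with $V^{n-1}=U^{*(n-1)(n-2)/2}V_1^{n-1}V_2^{n-1}$, the factor that needs moving is $V_1$ past $V_2^{n-1}$ (i.e., the rearranged first identity of item (1)), not $V_2$ past $V_1^{n-1}$; the latter fits the order $V^n=V\,V^{n-1}$, and of course both yield the same exponent $-n(n-1)/2$, so the argument is unaffected.
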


\begin{proof}
Suppose $(V_1,V_2)$ is a pair of twisted isometries with respect to a twist 
$U$ on $\clh$. Then $V_1V_2= UV_2V_1$, $UV_1=V_1U$ and $UV_2=V_2U$, where 
$U$ is unitary on $\clh$. Using this relation, we can prove the all the 
equations. 
\end{proof}

\begin{lem}\label{Twisted isometries lemma}
Let $(V_1, V_2)$ be a pair of twisted isometries on a Hilbert space 
$\mathcal{H}$. Then 
\[
\mathcal{W}=\mathcal{W}_1 \oplus V_1 \mathcal{W}_2=V_2 \mathcal{W}_1 
\oplus \mathcal{W}_2, 
\] 	
and the operator $\hat{U}$ on $\mathcal{W}$ defined by
\[
\hat{U}(\zeta_1 \oplus V_1 \zeta_2)= V_2\zeta_1 \oplus \zeta_2 
\]
for $\zeta_1 \in \mathcal{W}_1$ \text{and} $\zeta_2 \in \mathcal{W}_2$, 
is a unitary operator. Moreover, 
\[
V_i^* \clw= \clw_j \quad \mbox{for} \quad 1\leq i, j \leq 2 \mbox{~and}
~i \neq j.
\]
\end{lem}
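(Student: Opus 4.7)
The plan is to follow the classical template for commuting isometries (where one gets $\clw=\clw_1\oplus V_1\clw_2=V_2\clw_1\oplus\clw_2$), and check that everything still works in the twisted setting thanks to the fact that the twist $U$ commutes with $V_1,V_2$. The only genuinely new step compared to the commuting case is verifying that $V_1V_2\clh=V_2V_1\clh$, even though $V_1V_2\ne V_2V_1$ in general.

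First I would observe that $V:=V_1V_2$ is an isometry, so $\clw=\clh\ominus V\clh$ makes sense. I would then show
\[
V_1V_2\clh=V_2V_1\clh.
\]
Indeed, since $V_1V_2=UV_2V_1$ and $U$ commutes with both $V_1$ and $V_2$ (by definition of twist),
\[
V_1V_2\clh = UV_2V_1\clh = V_2V_1 U\clh = V_2V_1\clh,
\]
using $U\clh=\clh$ as $U$ is unitary. Now, applying the isometry $V_1$ to the decomposition $\clh=\clw_2\oplus V_2\clh$ gives $V_1\clh=V_1\clw_2\oplus V_1V_2\clh$, and plugging this into $\clh=\clw_1\oplus V_1\clh$ yields
\[
\clh=\clw_1\oplus V_1\clw_2\oplus V_1V_2\clh,
\]
which identifies $\clw=\clw_1\oplus V_1\clw_2$. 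The same argument with indices swapped, together with the equality $V_1V_2\clh=V_2V_1\clh$ just established, gives $\clw=V_2\clw_1\oplus\clw_2$.

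Next I would verify that $\hat U$ is unitary. The two decompositions of $\clw$ just obtained make $\hat U$ well defined, because every vector in $\clw$ has a unique representation $\zeta_1\oplus V_1\zeta_2$ with $\zeta_i\in\clw_i$, and its image $V_2\zeta_1\oplus\zeta_2$ lives in $V_2\clw_1\oplus\clw_2=\clw$. Norm preservation follows from the fact that $V_1,V_2$ are isometries:
\[
\|\hat U(\zeta_1\oplus V_1\zeta_2)\|^2
=\|V_2\zeta_1\|^2+\|\zeta_2\|^2
=\|\zeta_1\|^2+\|V_1\zeta_2\|^2
=\|\zeta_1\oplus V_1\zeta_2\|^2 .
\]
Surjectivity is immediate from the second decomposition: an arbitrary vector $V_2\eta_1\oplus\eta_2\in V_2\clw_1\oplus\clw_2$ is the image of $\eta_1\oplus V_1\eta_2$. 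Hence $\hat U$ is unitary.

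Finally, for $V_i^*\clw=\clw_j$ with $i\ne j$, I would use the two decompositions directly. Since $\clw_i=\ker V_i^*$, we have $V_1^*\zeta_1=0$ for $\zeta_1\in\clw_1$ and $V_1^*(V_1\zeta_2)=\zeta_2$ for $\zeta_2\in\clw_2$ (using $V_1^*V_1=I$), so
\[
V_1^*\clw=V_1^*(\clw_1\oplus V_1\clw_2)=\clw_2.
\]
Exchanging roles via $\clw=V_2\clw_1\oplus\clw_2$ gives $V_2^*\clw=\clw_1$. The only subtlety anywhere is the equality $V_1V_2\clh=V_2V_1\clh$, which really is the heart of why the lemma survives the passage from the commuting to the twisted setting; everything else reduces to standard Wold-type bookkeeping.
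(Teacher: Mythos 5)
Your proposal is correct and follows essentially the same line as the paper. The paper obtains the two decompositions of $\clw$ from the projection identity $I-VV^*=(I-V_1V_1^*)+V_1(I-V_2V_2^*)V_1^*=V_2(I-V_1V_1^*)V_2^*+(I-V_2V_2^*)$, whereas you derive them by splicing $\clh=\clw_i\oplus V_i\clh$ and $V_i$-applying one into the other; the equality $V_1V_2\clh=V_2V_1\clh$ you isolate is exactly what makes the paper's second projection identity hold (equivalently $VV^*=V_1V_2V_2^*V_1^*=V_2V_1V_1^*V_2^*$, using that $U$ doubly commutes with each $V_i$). You also spell out the well-definedness, norm-preservation and surjectivity of $\hat U$, which the paper compresses to ``directly follows''; that is a welcome bit of extra detail rather than a genuine divergence.
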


\begin{proof}
Suppose $(V_1, V_2)$ is a pair of twisted isometries with respect to a twist $U$
on $\mathcal{H}$. Then $V_1V_2= UV_2V_1$, and $V_1, V_2 \in \{ U \}'$. Since $U$ 
is unitary, the pairs $(V_i, U)$ are doubly commuting on $\clh$ for $i=1, 2$.
Let $V=V_1V_2$. Then
\[
I -VV^*= (I-V_1V_1^*) \oplus V_1(I-V_2V_2^*)V_1^*=V_2(I-V_1V_1^*)V_2^* \oplus (I-V_2V_2^*). 
\]	
This implies that
\[
\mathcal{W}=\mathcal{W}_1 \oplus V_1 \mathcal{W}_2= V_2 \mathcal{W}_1 \oplus \mathcal{W}_2.
\] 
Now the operator $\hat{U}$ is unitary directly follows from the above line.

For the last part, we have
\[
	V_i^* \clw= V_i^*(V_i\clw_j \oplus \clw_i)=\clw_j
\]
for $i \neq j$.	
\end{proof}

We present an important result and will be used to characterize the 
doubly twisted isometries.

\begin{lem}\label{Joint reducing subspace}
Let $(V_1,V_2)$ be a pair of twisted isometries with respect to a twist 
$U$ on $\clh$ and let $V=V_1V_2$. Then $\clh_s(V)$ and $\clh_u(V)$ are 
joint $(V_1, V_2)$-reducing subspaces of $\clh$.  
Moreover,
\[
\clh_u(V) \subseteq \clh_u(V_i) \quad   \text{and} \quad  \clh_s(V_i) 
\subseteq \clh_u(V)
\]
for $i=1,2$.
\end{lem}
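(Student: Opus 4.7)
The plan is to first observe that $V = V_1V_2$ is indeed an isometry (since $V^*V = V_2^*V_1^*V_1V_2 = V_2^*V_2 = I_\clh$), so by the Wold--von Neumann decomposition (Theorem \ref{thm-Wold}), $\clh = \clh_u(V) \oplus \clh_s(V)$, with
\[
\clh_u(V) = \bigcap_{n \geq 0} V^n \clh.
\]
The proof will rely on Lemma \ref{Lemma properties of twisted isometries} together with the key fact that $U$ is unitary, so $U^k \clh = \clh$ for every $k \in \mathbb{Z}$, and $U$ commutes with $V_1, V_2$, hence with $V$.

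Next I would show that $\clh_u(V)$ reduces both $V_1$ and $V_2$. For invariance under $V_i$, use Lemma \ref{Lemma properties of twisted isometries}(2) which gives $V_i V^n = U^{\pm n} V^n V_i$; then $V_i V^n \clh \subseteq U^{\pm n} V^n \clh = V^n U^{\pm n} \clh = V^n \clh$ (pulling the commuting unitary $U^{\pm n}$ past $V^n$). Intersecting over $n$ yields $V_i \clh_u(V) \subseteq \clh_u(V)$. For invariance under $V_i^*$, fix $h \in \clh_u(V)$ and $n \geq 0$, write $h = V^{n+1}g$ with $g \in \clh$, and apply Lemma \ref{Lemma properties of twisted isometries}(3):
\[
V_1^* h = V_1^* V^{n+1} g = U^{*n} V^n V_2 g = V^n (U^{*n} V_2 g) \in V^n \clh,
\]
and analogously $V_2^* h = V^n (U^{n+1} V_1 g) \in V^n \clh$. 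Thus $V_i^* \clh_u(V) \subseteq \clh_u(V)$ for $i = 1,2$, so $\clh_u(V)$ reduces both $V_1,V_2$, and consequently so does its orthogonal complement $\clh_s(V)$.

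For the inclusions, I would invoke Lemma \ref{Lemma properties of twisted isometries}(4): writing $V^n = U^{*n(n-1)/2} V_1^n V_2^n$, we get
\[
V^n \clh = U^{*n(n-1)/2} V_1^n V_2^n \clh \subseteq U^{*n(n-1)/2} V_1^n \clh = V_1^n U^{*n(n-1)/2}\clh = V_1^n \clh,
\]
where again the unitary $U^{*n(n-1)/2}$ is pulled past $V_1^n$ using commutativity. Intersecting over $n$ gives $\clh_u(V) \subseteq \clh_u(V_1)$, and the symmetric identity $V^n = U^{n(n+1)/2}V_2^n V_1^n$ yields $\clh_u(V) \subseteq \clh_u(V_2)$. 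Taking orthogonal complements within $\clh$ then gives $\clh_s(V_i) = \clh_u(V_i)^\perp \subseteq \clh_u(V)^\perp = \clh_s(V)$ for $i=1,2$ (the stated conclusion $\clh_s(V_i) \subseteq \clh_u(V)$ should read $\clh_s(V_i) \subseteq \clh_s(V)$, as the two ranges would otherwise be orthogonal).

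The only subtle point, and hence the main obstacle, is handling the twisting factor $U^{\pm k}$ that appears whenever we move $V_1, V_2, V_1^*, V_2^*$ past a power of $V$; the whole argument hinges on the fact that $U$ is unitary and central among $V_1, V_2, V$, so these twist factors can always be absorbed without shrinking or enlarging the subspaces $V^n\clh$ and $V_i^n\clh$. Everything else is the standard Wold machinery applied to $V$.
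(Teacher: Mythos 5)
Your proof is correct and relies on the same essential machinery as the paper, namely Lemma \ref{Lemma properties of twisted isometries} together with the fact that the twist $U$ is unitary and commutes with $V_1,V_2,V$, so the powers $U^{\pm k}$ can be pulled through $V^n$ and absorbed. The difference in route is mild but real: you establish that $\clh_u(V)=\bigcap_{n\geq 0}V^n\clh$ is jointly $(V_1,V_2)$-reducing directly from parts (2) and (3) of Lemma \ref{Lemma properties of twisted isometries}, whereas the paper instead works with the shift part $\clh_s(V)=\bigoplus_{n\geq 0}V^n\clw$, which forces it to first invoke Lemma \ref{Twisted isometries lemma} to get the inclusions $V_i\clw\subseteq\clw\oplus V\clw$ before Lemma \ref{Lemma properties of twisted isometries} can be applied. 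Your version bypasses Lemma \ref{Twisted isometries lemma} entirely and is a bit more streamlined; the paper's version has the advantage of exhibiting more explicitly how $V_i$ acts on the wandering blocks of $V$. The treatment of the inclusions $\clh_u(V)\subseteq\clh_u(V_i)$ via part (4) of Lemma \ref{Lemma properties of twisted isometries} is identical in both. You are also correct that the final displayed inclusion in the lemma, $\clh_s(V_i)\subseteq\clh_u(V)$, must be a typo for $\clh_s(V_i)\subseteq\clh_s(V)$: combined with $\clh_u(V)\subseteq\clh_u(V_i)$ it would place $\clh_s(V_i)$ inside its own orthogonal complement, forcing it to be trivial, and indeed the paper's own proof derives exactly $\clh_u(V_i)^\perp\subseteq\clh_u(V)^\perp$, i.e.\ $\clh_s(V_i)\subseteq\clh_s(V)$, before mis-transcribing the conclusion.
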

\begin{proof}
Suppose that $(V_1, V_2)$ is a pair of twisted isometries with respect 
to a twist $U$ on $\mathcal{H}$. Then $V_1V_2= UV_2V_1$, and $UV_i=V_iU$ 
for $i=1,2$. From Lemma \ref{Twisted isometries lemma}, we have 
\begin{align*}
V_1\clw=V_1\clw_2 \oplus V\clw_1
&= V_1(\clh \ominus V_2\clh) \oplus V(\clh \ominus V\clh)\\
& \subseteq (\clh \ominus V\clh) \oplus V\clw \\
&= \clw \oplus V\clw.
\end{align*}
Now $(U, V_i)$ is a doubly commuting pair and hence $U\clw_i = \clw_i$
for $i=1,2$. Therefore, by Lemma \ref{Lemma properties of twisted isometries}, 
for each $n \geq 0$ we have
\begin{equation}\label{DO Equation-1}
V_1V^n \clw =U^n V^nV_1 \clw \subseteq V^n(\clw \oplus V\clw) \subseteq \clh_s(V).
\end{equation}
Similarly, 
\begin{align*}
&V_2\clw=V_2\clw_1 \oplus UV\clw_2=V_2\clw_1 \oplus V\clw_2 \subseteq
\clw \oplus V\clw,
\end{align*}
which follows that, for $n \geq 0$,
\begin{align}\label{DO Equation-2}
V_2V^n \clw = V^nV_2U^{*n} \clw=V^n V_2 \clw \subseteq V^n(\clw 
\oplus V\clw) \subseteq \clh_s(V).
\end{align}
From the equations (\ref{DO Equation-1}) and (\ref{DO Equation-2}), 
it follows that $\clh_s(V)$ is joint $(V_1,V_2)$-invariant subspace.
	
\NI
Again using Lemma \ref{Lemma properties of twisted isometries}, we have 
\[
V_1^* V^n \clw= U^{*{n-1}}V^{n-1}V_2 \clw = V^{n-1}(V_2 \clw) \subseteq 
V^{n-1}(\clw \oplus V\clw), 
\]
and 
\[
V_2^*V^n \clw= U^nV^{n-1}(V_1\clw)= V^{n-1}(V_1\clw)= V^{n-1}(\clw \oplus V\clw).
\]
Thus $\clh_s(V)$ is $(V_1^*, V_2^*)$-invariant subspace. Hence $\clh_s(V)$ 
is a joint $(V_1,V_2)$ reducing subspace of $\clh$. 
	
Now $\clh_u(V)= \clh_s(V)^{\perp}$, and hence $\clh_u(V)$ and $\clh_s(V)$ 
are both joint $(V_1,V_2)$-reducing subspaces of $\clh$.  
From Lemma \ref{Lemma properties of twisted isometries}, we have for $n \geq 0$,
\begin{align*}
V^n \clh&=U^{\frac{n(n+1)}{2}} V_2^n V_1^n\clh = V_2^n (V_1^n \clh) \\
&=U^{*{\frac{n(n-1)}{2}}}V_1^nV_2^n\clh=V_1^n (V_2^n \clh),
\end{align*}
which implies
\[
V^n \clh \subseteq V_1^n \clh, V_2^n \clh. 
\]
Consequently, $\clh_u(V) \subset \clh_u(V_i)$ and hence 
$\clh_u(V_i)^{\perp} \subseteq \clh_u(V)^{\perp}$. Therefore
$ \clh_s(V_i) \subseteq \clh_u(V)$ for $i =1,2$.
This completes the proof.
\end{proof}

The following result is a characterization for doubly twisted isometries.
  
\begin{thm}
Let $(V_1, V_2)$ be a pair of twisted isometries with respect to a twist 
$U$ on a Hilbert space $\clh$. Then the followings are equivalent:
\begin{enumerate}
\item $(V_1,V_2)$ is doubly twisted with respect the same twist $U$
\item $V_1\clw_2 \subset \clw_2$ 
\item $V_2\clw_1 \subseteq \clw_1$. 
\end{enumerate}
\end{thm}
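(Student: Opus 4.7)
The plan is to verify the three implications by straightforward manipulations, using the twisted identity $V_1V_2 = UV_2V_1$, the fact that $V_1, V_2$ commute with $U$, the isometry property $V_i^*V_i = I$, and the identification $\clw_i = \ker V_i^*$.

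For (1) $\Rightarrow$ (2), (3), I would observe that if $(V_1, V_2)$ is doubly twisted, then taking the adjoint of $V_1^*V_2 = U^*V_2V_1^*$ yields $V_2^*V_1 = UV_1V_2^*$. Hence for any $w \in \clw_2 = \ker V_2^*$, we get $V_2^*(V_1 w) = UV_1V_2^* w = 0$, so $V_1\clw_2 \subseteq \clw_2$. Symmetrically, the relation $V_1^*V_2 = U^*V_2V_1^*$ directly gives $V_2\clw_1 \subseteq \clw_1$.

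For (2) $\Rightarrow$ (1), the key observation is that $V_1\clw_2 \subseteq \clw_2$ is equivalent to $V_2^*V_1(I - V_2V_2^*) = 0$, i.e., $V_2^*V_1 = V_2^*V_1V_2V_2^*$. Now I would use the twisted identity and the hypothesis that $U$ commutes with $V_2$ (together with $V_2^*V_2 = I$) to simplify the right-hand side:
\[
V_2^*V_1V_2 \;=\; V_2^*(UV_2V_1) \;=\; U(V_2^*V_2)V_1 \;=\; UV_1.
\]
This gives $V_2^*V_1 = UV_1V_2^*$, whose adjoint is precisely the second doubly twisted relation $V_1^*V_2 = U^*V_2V_1^*$, so (1) holds.

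For (3) $\Rightarrow$ (1) I would argue in exactly the same spirit: $V_2\clw_1 \subseteq \clw_1$ rewrites as $V_1^*V_2 = V_1^*V_2V_1V_1^*$, and the twisted relation in the form $V_2V_1 = U^*V_1V_2$ together with $V_1^*V_1 = I$ and $UV_1 = V_1U$ gives
\[
V_1^*V_2V_1 \;=\; V_1^*U^*V_1V_2 \;=\; U^*(V_1^*V_1)V_2 \;=\; U^*V_2,
\]
so $V_1^*V_2 = U^*V_2V_1^*$, completing the cycle. There is no real obstacle here; the only subtle point is to consistently exploit that $U$ is a unitary in the commutant of both $V_1$ and $V_2$, so that it may be freely moved past $V_i^*V_i$ cancellations — this is what converts the one-sided twisted relation and the wandering-subspace invariance into the two-sided doubly twisted relation.
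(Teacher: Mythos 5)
Your proof is correct, and it takes a genuinely simpler route than the paper's for the crucial converse direction. The paper proves $(2)\Rightarrow(1)$ by first invoking the Wold decomposition of the product $V = V_1V_2$, then Lemma \ref{Joint reducing subspace} to see that $\clh_u(V)$ and $\clh_s(V)$ jointly reduce $V_1, V_2, U$, then handling the unitary part and the shift part separately (and on the shift part checking the identity on $V^n\clw$ for $n\ge 1$ and on $\clw$ itself using the decomposition $\zeta = V_2\zeta_1\oplus\zeta_2$ from Lemma \ref{Twisted isometries lemma}). Your argument replaces all of this with a single operator-algebra step: since $\clw_2=\operatorname{ran}(I-V_2V_2^*)$, the inclusion $V_1\clw_2\subseteq\clw_2$ is the identity $V_2^*V_1 = V_2^*V_1V_2V_2^*$, and then $V_2^*V_1V_2 = V_2^*(UV_2V_1) = UV_1$ immediately gives $V_2^*V_1 = UV_1V_2^*$, whose adjoint is the doubly twisted relation. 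This is shorter, needs no geometric preliminaries, and also cleanly closes the cycle with the symmetric computation $(3)\Rightarrow(1)$ — an implication the paper dispatches by omission. What the paper's longer route buys is a constructive picture of where the doubly twisted property "lives" (trivially on the unitary part of $V$, and checked wandering-subspace by wandering-subspace on the shift part), which fits the surrounding structural analysis of Section 6; but as a proof of this equivalence alone, your direct computation is preferable.
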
	

\begin{proof}
Suppose that $(V_1, V_2)$ is a pair of doubly twisted with respect 
to a twist $U$ on $\clh$. Then 
\[
V_1V_2=UV_2V_1, \quad V_1^{*}V_2=U^*V_2V_1^{*}, \quad \text{and} 
\quad V_1, V_2 \in \{U\}^{'}.
\] 
Now
\[
V_1(I-V_2V_2^*)V_1^*= (V_1-UV_2V_1V_2^*)V_1^*=(V_1-UV_2U^*V_2^*V_1)
V_1^*=(I-V_2V_2^*)V_1V_1^*.
\]
It follows $V_1\clw_2 \subseteq \clw_2$, i.e., (1) implies (2). 
Similarly, we can prove (1) implies (3).

Now we shall only prove that (2) implies (1). Since the pair $(V_1, V_2)$ 
is twisted isometries with respect to a twist $U$,
\[
V_1V_2=UV_2V_1, \quad UV_1=V_1U, \quad \text{and}\quad UV_2=V_2U.
\] 
Consider the Wold-von Neumann decomposition for the isometry $V=V_1 V_2$: 
\[
\mathcal{H}=\mathcal{H}_s(V) \oplus \mathcal{H}_u (V). 
\]
Now Lemma \ref{Joint reducing subspace} gives that $\mathcal{H}_s (V), 
\mathcal{H}_u(V)$ are $V_1, V_2$-reducing subspaces and hence $V_i|_{\clh_u(V)}$ 
is unitary for $i=1,2$. Again $(U,V)$ is doubly commuting, so by Corollary 
\ref{DC-reduce}, both the subspaces $\clh_u(V)$ and $\clh_s(V)$ reduce $U$. 
Therefore, $(V_1|_{\mathcal{H}_u(V)}), V_2|_{\mathcal{H}_u(V)})$ on 
$\mathcal{H}_u$ is doubly twisted with respect to the twist $U|_{\clh_u(V)}$. 
It remains to prove that $V_2^*V_1=UV_1V_2^*$ on $\clh_s(V)$. Now
\begin{align*}
(V_2^*V_1-UV_1V_2^*)V^n
&=V_2^*V_1V^n -UV_1V_2^*V^n\\
&= V_2^*V_1V_1V_2V^{n-1}-UV_1V_2^*V_1V_2V^{n-1}\\
&=U^2 V_1^2 V^{n-1}- U^2 V_1^2 V^{n-1} \\
&=0,
\end{align*}
and hence
\[
V_2^*V_1- UV_1V_2^*=0 \quad \text{on} \quad  V^n \clw \quad   
\forall \quad  n \geq 1.
\]
To complete the proof we shall show that 
\[
V_2^*V_1=UV_1 V_2^* \quad \text{on} \quad \mathcal{W}.
\] 
Let $\zeta \in \clw$. Then By Lemma \ref{Twisted isometries lemma}, 
\[
\zeta=V_2\zeta_1 \oplus \zeta_2  
\] 
for some $\zeta_1 \in \clw_1=\ker V_1^*$ and $\zeta_2 \in \clw_2=\ker V_2^*$.
Since $V_1 \clw_2 \subseteq \clw_2$, 
\begin{align*}
V_2^*V_1 \zeta = V_2^*V_1(V_2 \zeta_1 \oplus \zeta_2 )
&= V_2^* V_1V_2\zeta_1 +  V_2^* (V_1\zeta_2)\\
&=UV_2^* V_2 V_1 \zeta_1 \\
&=UV_1 \zeta_1.
\end{align*}
Again
\begin{align*}
UV_1 V_2^* \zeta= UV_1 V_2^*(V_2 \zeta_1 \oplus \zeta_2 )
&=UV_1 V_2^*V_2\zeta_1 + UV_1 V_2^* \zeta_2 \\
&=UV_1 \zeta_1.
\end{align*}
Hence $(V_1, V_2) $ is a pair of doubly twisted isometries on $\clh$. 
\end{proof}	

\begin{rem}
It is noted that we can characterize doubly twisted isometries by 
studying defect operators. Characterization for doubly twisted 
isometries and more on defect operators will be carried out in our 
future papers.
\end{rem}

\iffalse
\NI\textit{Acknowledgement:} 
%The authors are grateful to the anonymous reviewer for his/her critical 
%and constructive reviews and suggestions that have
%substantially improved the presentation of the paper.
%The authors are also thankful to Prof. Jaydeb Sarkar for many fruitful 
%discussions and his valuable comments. 

The second author's research work is
supported in part by 
(1) Faculty Initiation Grant (FIG scheme), 
IIT Roorkee (Ref. No: MAT/FIG/100820) and 
(2) SERB MATRICS, Govt. of India 
(Ref. No: MTR/2021/000695 ) 

\fi
%and he also 
%acknowledges Indian Statistical Institute,
%Bangalore Centre for warm hospitality.

\label{Ref}

\end{document}